\def\RSthmtxt{theorem~}\newref{thm}{name = \RSthmtxt}}
\def\RSlemtxt{lemma~}\newref{lem}{name = \RSlemtxt}}
\numberwithin{equation}{section}
\numberwithin{figure}{section}
\theoremstyle{plain}
\newtheorem{thm}{\protect\theoremname}[section]
  \theoremstyle{definition}
  \newtheorem{defn}[thm]{\protect\definitionname}
  \theoremstyle{remark}
  \newtheorem{rem}[thm]{\protect\remarkname}
  \theoremstyle{plain}
  \newtheorem{lem}[thm]{\protect\lemmaname}
  \theoremstyle{plain}
  \newtheorem{cor}[thm]{\protect\corollaryname}
  \providecommand{\corollaryname}{Corollary}
  \providecommand{\definitionname}{Definition}
  \providecommand{\lemmaname}{Lemma}
  \providecommand{\remarkname}{Remark}
\providecommand{\theoremname}{Theorem}
\begin{document}
\global\long\def\al{\alpha}
\global\long\def\be{\beta}
\global\long\def\ga{\gamma}
 \global\long\def\Ga{\Gamma}
 \global\long\def\del{\delta}
 \global\long\def\Del{\Delta}
 \global\long\def\lam{\lambda}
 \global\long\def\Lam{\Lambda}
 \global\long\def\eps{\epsilon}
 \global\long\def\veps{\varepsilon}
 \global\long\def\ka{\kappa}
 \global\long\def\sig{\sigma}
 \global\long\def\Sig{\Sigma}
 \global\long\def\om{\omega}
 \global\long\def\Om{\Omega}
 \global\long\def\vphi{\varphi}

\global\long\def\cA{\mathcal{A}}
\global\long\def\cB{\mathcal{B}}
\global\long\def\cC{\mathcal{C}}
 \global\long\def\cD{\mathcal{D}}
 \global\long\def\cE{\mathcal{E}}
 \global\long\def\cF{\mathcal{F}}
 \global\long\def\cG{\mathcal{G}}
 \global\long\def\cH{\mathcal{H}}
 \global\long\def\cI{\mathcal{I}}
 \global\long\def\cJ{\mathcal{J}}
 \global\long\def\cK{\mathcal{K}}
 \global\long\def\cL{\mathcal{L}}
 \global\long\def\cM{\mathcal{M}}
 \global\long\def\cN{\mathcal{N}}
 \global\long\def\cO{\mathcal{O}}
 \global\long\def\cP{\mathcal{P}}
 \global\long\def\cQ{\mathcal{Q}}
 \global\long\def\cR{\mathcal{R}}
 \global\long\def\cS{\mathcal{S}}
 \global\long\def\cT{\mathcal{T}}
\global\long\def\cU{\mathcal{U}}
\global\long\def\cV{\mathcal{V}}
\global\long\def\cW{\mathcal{W}}
 \global\long\def\cX{\mathcal{X}}
 \global\long\def\cY{\mathcal{Y}}
 \global\long\def\cZ{\mathcal{Z}}

\global\long\def\bA{\mathbb{A}}
\global\long\def\bB{\mathbb{B}}
 \global\long\def\bC{\mathbb{C}}
 \global\long\def\bD{\mathbb{D}}
 \global\long\def\bG{\mathbb{G}}
 \global\long\def\bL{\mathbb{L}}
 \global\long\def\bM{\mathbb{M}}
 \global\long\def\bP{\mathbb{P}}
 \global\long\def\bR{\mathbb{R}}
 \global\long\def\bZ{\mathbb{Z}}
 \global\long\def\bQ{\mathbb{Q}}
\global\long\def\bF{\mathbb{F}}
 \global\long\def\bK{\mathbb{K}}
 \global\long\def\bN{\mathbb{N}}
\global\long\def\bH{\mathbb{H}}
 \global\long\def\bT{\mathbb{T}}
 \global\long\def\bS{\mathbb{S}}

\global\long\def\goa{\mathfrak{a}}
\global\long\def\gob{\mathfrak{b}}
 \global\long\def\goc{\mathfrak{c}}
 \global\long\def\gog{\mathfrak{g}}
 \global\long\def\goh{\mathfrak{h}}
 \global\long\def\gol{\mathfrak{l}}
 \global\long\def\gon{\mathfrak{n}}
 \global\long\def\gop{\mathfrak{p}}
 \global\long\def\gos{\mathfrak{s}}
 \global\long\def\got{\mathfrak{t}}
 \global\long\def\gou{\mathfrak{u}}

\global\long\def\operatorname#1{\mathrm{#1}}
\global\long\def\on#1{\mathrm{#1}}

\global\long\def\SL{\mathrm{SL}}
\global\long\def\SO{\mathrm{SO}}
 \global\long\def\PO{\mathrm{PO}}
 \global\long\def\PGL{\mathrm{PGL}}
 \global\long\def\ASL{\mathrm{ASL}}
\global\long\def\GL{\mathrm{GL}}
 \global\long\def\PSL{\mathrm{PSL}}

\global\long\def\defi{\overset{\on{def}}{=}}
\global\long\def\comp{\textrm{{\tiny\ensuremath{\circ}}}}
\global\long\def\lie{\operatorname{Lie}}
\global\long\def\bignorm#1{\left|#1\right|}
\global\long\def\norm#1{\left\Vert #1\right\Vert }
\global\long\def\wt#1{\widetilde{#1}}
\global\long\def\wh#1{\widehat{#1}}
\global\long\def\set#1{\left\{  #1\right\}  }
\global\long\def\pa#1{\left(#1\right)}
\global\long\def\idist#1{\left\langle #1\right\rangle }
\global\long\def\bigav#1{\left|#1\right|}
\global\long\def\av#1{|#1|}
\global\long\def\diag#1{\on{diag}\left(#1\right)}
\global\long\def\mb#1{\mathbf{#1}}
\global\long\def\tb#1{\textbf{#1}}
\global\long\def\mat#1{\left(\begin{array}{c}
 #1\end{array}\right)}
\global\long\def\br#1{\left[#1\right]}
\newcommandx\smallmat[1][usedefault, addprefix=\global, 1=]{\left(#1\right)}
\global\long\def\scrly#1{\mathscr{#1}}
\global\long\def\tran#1{#1^{t}}

\global\long\def\re{\operatorname{Re}}
\global\long\def\im{\operatorname{Im}}
\global\long\def\Mat{\operatorname{Mat}}
\global\long\def\pr{\operatorname{pr}}
\global\long\def\wstar{\overset{\on w^{*}}{\longrightarrow}}
\global\long\def\spa{\on{span}}
\global\long\def\supp{\on{supp}}
\global\long\def\dsim{\Del_{\on{sym}}}

\global\long\def\adorb{\tilde{x}_{0}A_{\bA}}
\global\long\def\madorb{\mu_{\adorb}}

\global\long\def\lla{\longleftarrow}
\global\long\def\la{\leftarrow}
\global\long\def\lra{\longrightarrow}
\global\long\def\ra{\rightarrow}

\global\long\def\ora#1{\overset{#1}{\rightarrow}}
\global\long\def\olra#1{\overset{#1}{\longrightarrow}}
\global\long\def\olla#1{\overset{#1}{\longleftarrow}}
\global\long\def\ulra#1{\underset{#1}{\longrightarrow}}
\global\long\def\ulla#1{\underset{#1}{\longleftarrow}}

\global\long\def\hra{\hookrightarrow}
\global\long\def\lera{\leftrightarrow}
\global\long\def\onto{\xymatrix{\ar@{>>}[r]}
 }

\global\long\def\slra#1{\stackrel{#1}{\lra}}
\global\long\def\sra#1{\stackrel{#1}{\ra}}

\global\long\def\eq#1{ 
\[
{#1}
\]
 }

\global\long\def\eqlabel#1#2{ 
\begin{equation}
{#2}\label{=0000231}
\end{equation}
 }

\global\long\def\eqalign#1{ 
\begin{align*}
{#1}
\end{align*}
 }

\global\long\def\eqalignlabel#1#2{ 
\begin{align}
{#2}\label{=0000231}
\end{align}
 }

\global\long\def\usnote#1{\marginpar{\color{cyan}\tiny[US] #1}}
 \global\long\def\usadd#1{{\color{cyan}[US] #1}}
 \global\long\def\odnote#1{\marginpar{\color{blue}[OD] #1}}
 \global\long\def\odadd#1{{\color{blue}{\tiny[OD]} #1}}


\global\long\def\norm#1{\left\Vert #1\right\Vert }
\global\long\def\AA{\mathbb{A}}
\global\long\def\QQ{\mathbb{Q}}
\global\long\def\PP{\mathbb{P}}
\global\long\def\CC{\mathbb{C}}
\global\long\def\HH{\mathbb{H}}
\global\long\def\ZZ{\mathbb{Z}}
\global\long\def\NN{\mathbb{N}}
\global\long\def\KK{\mathbb{K}}
\global\long\def\RR{\mathbb{R}}
\global\long\def\FF{\mathbb{F}}
\global\long\def\oo{\mathcal{O}}
\global\long\def\aa{\mathcal{A}}
\global\long\def\bb{\mathcal{B}}
\global\long\def\limfi#1#2{{\displaystyle \lim_{#1\to#2}}}
\global\long\def\pp{\mathcal{P}}
\global\long\def\qq{\mathcal{Q}}
\global\long\def\da{\mathrm{da}}
\global\long\def\dt{\mathrm{dt}}
\global\long\def\dg{\mathrm{dg}}
\global\long\def\ds{\mathrm{ds}}
\global\long\def\dm{\mathrm{dm}}
\global\long\def\dmu{\mathrm{d\mu}}
\global\long\def\dx{\mathrm{dx}}
\global\long\def\dy{\mathrm{dy}}
\global\long\def\dz{\mathrm{dz}}
\global\long\def\dnu{\mathrm{d\nu}}
\global\long\def\flr#1{\left\lfloor #1\right\rfloor }
\global\long\def\nuga{\nu_{\mathrm{Gauss}}}
\global\long\def\diag#1{\mathrm{diag}\left(#1\right)}
\global\long\def\bR{\mathbb{R}}
\global\long\def\Ga{\Gamma}
\global\long\def\PGL{\mathrm{PGL}}
\global\long\def\SL{\mathrm{SL}}
\global\long\def\mb#1{\mathrm{#1}}
\global\long\def\wstar{\overset{w^{*}}{\longrightarrow}}
\global\long\def\vphi{\varphi}
\global\long\def\av#1{\left|#1\right|}
\global\long\def\inv#1{\left(\mathbb{Z}/#1\mathbb{Z}\right)^{\times}}
\global\long\def\cH{\mathcal{H}}
\global\long\def\cM{\mathcal{M}}
\global\long\def\bZ{\mathbb{Z}}
\global\long\def\bA{\mathbb{A}}
\global\long\def\bQ{\mathbb{Q}}
\global\long\def\bP{\mathbb{P}}
\global\long\def\eps{\epsilon}
\global\long\def\on#1{\mathrm{#1}}
\global\long\def\nuga{\nu_{\mathrm{Gauss}}}
\global\long\def\set#1{\left\{  #1\right\}  }
\global\long\def\smallmat#1{\begin{smallmatrix}#1\end{smallmatrix}}
\global\long\def\len{\mathrm{len}}

\title{Equidistribution of divergent orbits of the diagonal group in the
space of lattices}

\author{Ofir David and Uri Shapira}

\address{Department of Mathematics, Technion, Haifa, Israel}

\email{ushapira@tx.technion.ac.il}

\address{Department of Mathematics, Hebrew University, Jerusalem, Israel}

\email{ofir.david@mail.huji.ac.il}
\begin{abstract}
We consider divergent orbits of the group of diagonal matrices in
the space of lattices in Euclidean space. We define two natural numerical
invariants of such orbits: The discriminant - an integer - and the
type - an integer vector. We then study the question of the limit
distributional behaviour of these orbits as the discriminant goes
to infinity. Using entropy methods we prove that for divergent orbits
of a specific type, virtually any sequence of orbits equidistribute
as the discriminant goes to infinity. Using measure rigidity for higher
rank diagonal actions we complement this result and show that in dimension
3 or higher only very few of these divergent orbits can spend all
of their life-span in a given compact set before they diverge.
\end{abstract}

\maketitle

\section{Introduction}

One of the main points of interest in homogeneous dynamics is the
study of $A$-orbits in the space of $n$-dimensional lattices $X=X_{n}:=\Gamma\backslash G$
where $G=\SL_{n}\left(\RR\right),\ \Gamma=\SL_{n}\left(\ZZ\right)$
and $A\leq\SL_{n}\left(\RR\right)$ is the subgroup of diagonal matrices
with nonnegative entries. Many questions in this area can be formulated
by defining a natural sequence of probability measures $\mu_{i}$
supported on orbits $x_{i}A$ (or a finite union of such orbits),
and ask whether this sequence converges to some measure $\mu$, and
in particular whether it equidistribute, namely $\mu=\mu_{Haar}$
is the Haar measure on $X$ (which is the unique $G$-invariant probability
measure on $X$).  

In this paper we focus on divergent orbits, namely orbits of the form
$xA,\ x\in X$ where the map $A\to X$ defined by $a\mapsto xa$ is
proper. Fixing a Haar measure on $A$, its push forward $\mu_{xA}$
to $xA$ is a well defined $A$-invariant locally finite measure.
In particular, we observe that there is a unique $A$-invariant locally
finite measure on $xA$ up to a scalar product. We will be interested
in whether $\mu_{x_{i}A}$ (and averages of such measures) equidistribute
for natural sequences of divergent orbits.

In the interest of defining equidistribution of locally finite measures,
we introduce the following topology. Let $Z$ be a locally compact
second countable Hausdorff space and let $\cM(Z),\bP\cM\left(Z\right)$
denote the space of locally finite positive Borel measures on $Z$
and homothety classes of such (non-zero) measures respectively. For
$\mu\in\cM\left(Z\right)$ we let $\left[\mu\right]$ denote its homothety
class. It is straightforward to define a topology on $\bP\cM(Z)$
such that for $\left[\mu_{n}\right],\left[\mu\right]\in\bP\cM\left(Z\right)$
we have the limit $\lim\left[\mu_{n}\right]=\left[\mu\right]$ if
and only if there exist constants $c_{n}$ such that for any compact
set $K\subseteq Z$, $c_{n}\mu_{n}\mid_{K}\wstar\mu\mid_{K}$, or
equivalently for every $f,g\in C_{c}\left(Z\right)$ for which $\int g\dmu\neq0$
we have $\frac{\int f\dmu_{n}}{\int g\dmu_{n}}\to\frac{\int f\dmu}{\int g\dmu}$.
(see \cite{shapira_limiting_nodate}).

We shall say that a sequence $0\neq\mu_{n}\in\cM\left(Z\right)$ \emph{equidistributes}
if $\left[\mu_{n}\right]\to\left[\mu_{Haar}\right]$.

We continue to describe the families of divergent orbits which are
the focus of this paper. Recall first that we have the identification
$X=\SL_{n}\left(\ZZ\right)\backslash\SL_{n}\left(\RR\right)\cong\PGL_{n}\left(\ZZ\right)\backslash\PGL_{n}\left(\RR\right)$,
i.e. we may consider general lattices up to homothety instead of unimodular
lattices. We similarly consider orbits under the group of diagonal
matrices in $\PGL_{n}\left(\RR\right)$. A finite index subgroup group
of $\ZZ^{n}$ will be referred to as an integral lattice. Recall Mahler
compactness criterion which says that a sequence of lattices $x_{i}\in X$
diverges if and only if the length of the shortest nonzero vector
in $x_{i}$ converges to zero. It is then evident that integral lattices
have divergent $A$-orbits because they contain a nonzero vector on
each of the axes. On the other hand, it was shown in \cite{tomanov_closed_2003}
that any divergent orbit contains an integral lattice. Evidently,
each such orbit contains a unique integral lattice $L$ with minimal
covolume. This lattice is characterized as the unique integral lattice
in the orbit satisfying $\pi_{i}\left(L\right)=\ZZ$ for all $i$
where $\pi_{i}$ is the projection on the $i$-th coordinate. We call
such an integral lattice \emph{axis primitive }and its covolume is
called the \emph{discriminant }of the orbit. We note that if $L=\ZZ^{n}g$
for $g\in Mat_{n}\left(\ZZ\right)$, then $L$ is axis primitive if
and only if each column of $g$ is a primitive vector, i.e. $gcd\left(g_{1,i},...,g_{n,i}\right)=1$
for each $i$. We note that axis primitive is a stronger condition
than just primitive which is an integral lattice $L\leq\ZZ^{n}$ such
that there is no $0<c<1$ such that $cL$ is still contained in $\ZZ^{n}$,
or equivalently if $L=\ZZ^{n}g$, then $gcd\left\{ g_{i,j}\;\mid\;1\leq i,j\leq n\right\} =1$.

A more refinement invariant of an integral lattice is its type. Given
an integral lattice $L$, we define its type to be the finite abelian
group $\nicefrac{\ZZ^{n}}{L}$, which by the fundamental theorem of
abelian groups can be characterized by a vector $\left(q_{1},q_{2},...,q_{n}\right)\in\NN^{n}$
where $q_{i}\mid q_{i+1}$ and $\nicefrac{\ZZ^{n}}{L}\cong\nicefrac{\ZZ}{q_{1}\ZZ}\times\cdots\times\nicefrac{\ZZ}{q_{n}\ZZ}$.
We define the \emph{type }of a divergent orbit to be the type of its
unique axis primitive integral lattice. 

We shall focus on orbits of type $\left(1,q,...,q\right)$. Any lattice
$L$ of type $\left(1,q,...,q\right)$ satisfies $q\ZZ^{n}\subseteq L$
and $\nicefrac{L}{q\ZZ^{n}}\cong\nicefrac{\ZZ}{q\ZZ}$, and if in
addition $L$ is axis primitive, then we must have that $L=q\ZZ^{n}+\ZZ\left(1,p_{1},...,p_{n-1}\right)$
where $1\leq p_{i}\leq q$ and $\left(p_{i},q\right)=1$. \textcolor{blue}{}
This leads us to the following definition.
\begin{defn}
\label{def:families}For $q\in\NN$, we identify the sets $\left(\nicefrac{\ZZ}{q\ZZ}\right)^{\times}$
and $\left\{ 1\leq p\leq q\;\mid\;\left(p,q\right)=1\right\} $. With
this identification in mind we set $\Lambda_{q}=\left\{ \Gamma u_{\bar{p}/q}\ \mid\ \bar{p}\in\left(\left(\nicefrac{\ZZ}{q\ZZ}\right)^{\times}\right)^{n-1}\right\} $
where $u_{\bar{x}}=\left(\begin{array}{cc}
1 & \bar{x}\\
0_{\left(n-1\right)\times1} & I_{\left(n-1\right)\times\left(n-1\right)}
\end{array}\right)\in\SL_{n}\left(\RR\right)$ for $\bar{x}\in\RR^{n-1}$, and identify it in the natural way with
the sets $\left(\left(\nicefrac{\ZZ}{q\ZZ}\right)^{\times}\right)^{n-1}$,
$\left\{ 1\leq p\leq q\;\mid\;\left(p,q\right)=1\right\} ^{n-1}$
and $\left\{ u_{\bar{p}/q}\;\mid\;\bar{p}\in\left(\left(\nicefrac{\ZZ}{q\ZZ}\right)^{\times}\right)^{n-1}\right\} $.
\end{defn}
Thus, the orbits of type $\left(1,q,...,q\right)$ are in one to one
correspondence with $\Lambda_{q}$. Note in particular that we have
$\left|\Lambda_{q}\right|=\varphi\left(q\right)^{n-1}$. With these
notations we shall establish the following result which generalizes
the main result in \cite{david_equidistribution_2017}.
\begin{thm}
\label{thm:main_theorem}The following is true:
\begin{enumerate}
\item There exist $\Lambda_{q}'\subseteq\Lambda_{q}$ with $\limfi q{\infty}\frac{\left|\Lambda_{q}'\right|}{\left|\Lambda_{q}\right|}=1$,
s.t. given $x_{q}\in\Lambda_{q}'$ we have that $\left[\mu_{x_{q}A}\right]\to\left[\mu_{Haar}\right]$. 
\item If $0<\alpha\leq1$ and $\Lambda_{q}'\subseteq\Lambda_{q}$ such that
$\left|\Lambda_{q}'\right|\geq\alpha\left|\Lambda_{q}\right|$, then
$\left[{\displaystyle \sum_{x\in\Lambda_{q}'}}\mu_{xA}\right]\to\left[\mu_{Haar}\right]$.
\item We have the convergence $\left[{\displaystyle \sum_{x\in\Lambda_{q}}}\mu_{xA}\right]\to\left[\mu_{Haar}\right]$.
\end{enumerate}
\end{thm}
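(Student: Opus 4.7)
My plan is to establish parts (3), (1), (2) in that order, moving from the weakest averaged statement to the most refined individual one and then interpolating. Part (3) admits a direct arithmetic argument exploiting the large symmetry of $\Lambda_{q}$; Part (1) requires measure rigidity supplemented by a positive-entropy estimate; and Part (2) follows by a domination argument combining the other two.

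For Part (3), I would exploit the componentwise action of $((\ZZ/q\ZZ)^{\times})^{n-1}$ on $\Lambda_{q}$ coming from multiplication by diagonal integer-unit matrices modulo $q$, which preserves the full sum. The plan is to unfold $\int f\,d\!\sum_{x\in\Lambda_{q}}\mu_{xA}$ for $f\in C_{c}(X)$ into an iterated integral over $A$ and a sum over $\bar{p}\in((\ZZ/q\ZZ)^{\times})^{n-1}$, then apply Fourier analysis on the units-mod-$q$ group. The trivial character contributes a main term proportional to $\mu_{\mathrm{Haar}}$ (reflecting the uniform distribution of the lattices in $\Lambda_{q}$ across $X$); the non-trivial characters produce Kloosterman-type oscillating contributions negligible relative to the main term as $q\to\infty$. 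Equivalently, one may realize $\Lambda_{q}$ adelically as a Hecke packet at level $q$ and appeal to the standard equidistribution of Hecke packets.

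For Part (1), any subsequential weak-$*$ limit $[\nu]$ of $[\mu_{x_{q}A}]$ in $\bP\cM(X)$ is automatically $A$-invariant. The heart of the argument is to establish that $\nu$ has positive entropy under some singular element of $A$, for all $x_{q}\in\Lambda_{q}$ outside an exceptional subfamily of vanishing relative density. The entropy would be supplied by a clustering phenomenon: the units-mod-$q$ action on $\Lambda_{q}$ produces many other points of $\Lambda_{q}$ lying exponentially close (in $\log q$) to $x_{q}$ along unstable horospherical directions, forcing the leafwise measures of $\nu$ on unstable horospheres to be non-atomic. Combined with $A$-invariance, the Einsiedler--Katok--Lindenstrauss measure classification (for $n\geq 3$) then forces $\nu=\mu_{\mathrm{Haar}}$; the $n=2$ case is reducible to the result of \cite{david_equidistribution_2017}.

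Part (2) follows by pointwise domination. Writing $\nu_{q}=\sum_{x\in\Lambda_{q}'}\mu_{xA}\leq \sum_{x\in\Lambda_{q}}\mu_{xA}=\nu_{q}'$, any weak-$*$ limit of $\nu_{q}$, when normalized against the same test function used for $\nu_{q}'$, must be an $A$-invariant measure dominated by $\mu_{\mathrm{Haar}}$. The assumption $|\Lambda_{q}'|\geq\alpha|\Lambda_{q}|$, combined with Part (1) applied to the density-one subfamily of equidistributing orbits intersected with $\Lambda_{q}'$, provides a matching lower bound preventing the limit from vanishing; by $A$-ergodicity of Haar the limit must then be a positive multiple of $\mu_{\mathrm{Haar}}$. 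The main obstacle throughout is the positive-entropy estimate underlying Part (1): because divergent orbits carry only locally finite measures of infinite total mass, the standard entropy machinery must be rephrased in terms of the projective class and carried out on compact windows along each orbit before being transferred to the weak-$*$ limit. Quantifying the unstable clustering of $\Lambda_{q}$ and converting it to a positive-entropy bound valid on a density-one subfamily is the key technical novelty beyond the one-dimensional setting of \cite{david_equidistribution_2017}.
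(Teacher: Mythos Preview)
Your overall strategy diverges substantially from the paper's, and your route to (1) contains a genuine gap.

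\textbf{The paper's route.} The paper proves (3) first, but not via Fourier analysis or Hecke equidistribution. Instead it shows that any weak-$*$ limit of the normalized full averages $\delta_{\Lambda_q}^{\ln(q)\Delta}$ is an $A$-invariant probability measure with \emph{maximal} entropy $n-1$ for the one-parameter flow $T$, and invokes uniqueness of the measure of maximal entropy (valid for all $n\ge 2$). The two ingredients are a separation lemma bounding $|\Lambda_q a_q(\bar t)\cap xB_{\eta,N}|$ (which converts the size $|\Lambda_q|\sim q^{n-1}$ into an entropy lower bound) and an arithmetic no-escape-of-mass lemma for the full family $\Lambda_q$. Then (3)$\Rightarrow$(2) by extremality of $\mu_{\mathrm{Haar}}$ among $T$-invariant probability measures (your domination idea, but using only (3)), and (2)$\Rightarrow$(1) by a standard density/diagonal argument with a countable family of test functions. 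No appeal to EKL is made anywhere in the proof of Theorem~\ref{thm:main_theorem}.

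\textbf{The gap in your (1).} Your plan is to show that, for density-one many $x_q$, any subsequential limit $\nu$ of $[\mu_{x_qA}]$ has \emph{positive} entropy, and then apply EKL. There are two problems. First, you nowhere control escape of mass for a \emph{single} orbit measure $\mu_{x_qA}$; without this the limit $\nu$ need not be a probability measure, and the entropy machinery does not apply. The paper only proves non-escape for the full average over $\Lambda_q$, and indeed individual-orbit non-escape is not available a priori. Second, even granting a probability limit, positive entropy for $\nu$ does not force every ergodic component of $\nu$ to have positive entropy; EKL applies componentwise, so you would only conclude $\nu=c\,\mu_{\mathrm{Haar}}+(1-c)\,\nu_0$ with $c>0$ and $\nu_0$ an arbitrary zero-entropy $A$-invariant measure. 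To pin down $\nu=\mu_{\mathrm{Haar}}$ you need \emph{maximal} entropy (so that affinity of entropy forces every component to be Haar), and there is no mechanism in your clustering sketch to reach the maximal value for a single orbit. This is precisely why the paper attacks the full average first: for $\Lambda_q$ one has $\ln|\Lambda_q|/\ln q\to n-1$, which is exactly what the separation lemma converts into entropy $n-1$.

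\textbf{On your (3) and (2).} A Hecke/Fourier proof of (3) is conceivable but is a different theorem; note that the objects here are $A$-orbit measures, not Hecke point masses, so the ``trivial character gives Haar'' step is far from automatic and would itself require an equidistribution input. For (2), your argument already contains the right idea (domination plus ergodicity of Haar), but it needs only (3), not (1); once you have (3)$\Rightarrow$(2), part (1) drops out by the elementary density argument the paper uses, with no measure rigidity required.
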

\begin{rem}
While the implications $(1)\Rightarrow(2)\Rightarrow(3)$ are easy,
we actually first prove $(3)$ and show that $(3)\Rightarrow(2)\Rightarrow(1)$
using the fact that $\mu_{Haar}$ is ergodic (with respect to the
action of non-trivial elements of $A$), hence an extreme point in
the set of $A$-invariant probability measures.
\end{rem}
Recall that by Mahler criterion, compact sets in $X$ are closed sets
$K$ such that $inf\left\{ \norm v\;\mid\;0\neq v\in L,\;L\in K\right\} >0$.
Given a rational lattice $x\in X$, its $A$-orbit diverges since
$x$ contains an element in each axis and at least one of them contains
a very short vector once we apply an element of $A$ of large norm.
This motivates us to truncate the orbit and focus on its part where
all non-zero vectors on the axis are of length $\geq1$. Recall that
$A$ is naturally isomorphic to $\RR_{0}^{n}=\left\{ \bar{t}\in\RR^{n}\mid\sum_{1}^{n}t_{i}=0\right\} $
via the map $\bar{t}\mapsto diag\left(e^{t_{1}},...,e^{t_{n}}\right)$.
Moreover, for $x\in\Lambda_{q}$, identifying $A$ with the divergent
orbit $xA$ under $a\mapsto xa$, it is easily verified (see \subsecref{Axis-lattices}
for details) that the truncated part of the orbit mentioned above
is $\Delta_{full,q}=\ln\left(q\right)\Delta_{full}$ where
\[
\Delta_{full}=\left\{ \bar{t}\in\RR_{0}^{n}\;\mid\;t_{i}\geq0\geq t_{1}\geq-1,\ i\geq2\right\} .
\]
As $a\mapsto xa$ diverges quickly when $a\notin\Delta_{full,q}$,
we can expect that if $\left[\mu_{xA}\right]$ is close $\left[\mu_{Haar}\right]$,
then its restriction to $x\Delta_{full,q}$ must also be close to
$\mu_{Haar}$ up to some normalization (see \corref{from_locally_finite}
for the details). By this argument and \thmref{main_theorem} we expect
that for most $x\in\Lambda_{q}$, the truncated orbit $x\Delta_{full,q}$
would be roughly equidistributed in $X$. In particular, fixing some
compact set $K\subseteq X$ there should be very few $x\in\Lambda_{q}$
such that $x\Delta_{full,q}\subseteq K$. Indeed, using classification
of $A$-invariant measures with positive entropy (see \cite{einsiedler_invariant_2006})
we establish the following result.
\begin{thm}
\label{thm:bounded_orbits_uniform}Let $n\geq3$. Fix a compact set
$K\subseteq X_{n}$ and let $\Lambda_{q,K}=\left\{ x\in\Lambda_{q}\;\mid\;x\cdot\Delta_{full,q}\subseteq K\right\} $.
Then for any $\varepsilon>0$ we have that $\left|\Lambda_{q,K}\right|=O\left(\left|\Lambda_{q}\right|^{\varepsilon}\right)$.
\end{thm}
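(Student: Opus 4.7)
The plan is a proof by contradiction, following the standard entropy-plus-measure-rigidity paradigm. Suppose there exist $\varepsilon>0$ and a subsequence $q_j\to\infty$ with $\av{\Lambda_{q_j,K}}\geq\av{\Lambda_{q_j}}^{\varepsilon}$. For each such $q$, form the probability measure
\begin{equation*}
\nu_q := \frac{1}{\av{\Lambda_{q,K}}}\sum_{x\in\Lambda_{q,K}} \tilde\mu_{x\Delta_{full,q}},
\end{equation*}
where $\tilde\mu_{x\Delta_{full,q}}$ is the restriction of $\mu_{xA}$ to the truncated orbit $x\Delta_{full,q}$, renormalized to total mass one. Each $\nu_q$ is supported inside the compact set $K$, so by Prokhorov a further subsequence converges weak-$*$ to a probability measure $\nu$ with $\supp(\nu)\subseteq K$. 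Since $\Delta_{full,q}=\ln(q)\Delta_{full}$ has diameter $\asymp\ln q\to\infty$ in $A\cong\RR_0^n$, for any fixed $a\in A$ the symmetric difference between $\Delta_{full,q}$ and its $a$-translate has volume $O_a((\ln q)^{n-2})=o(\mathrm{vol}(\Delta_{full,q}))$, so the limit $\nu$ is $A$-invariant.

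The heart of the proof is an entropy lower bound for $\nu$. The elements of $\Lambda_q$, parametrized by $\bar p/q\in((\nicefrac{\ZZ}{q\ZZ})^{\times})^{n-1}$, are $1/q$-separated along the unipotent direction $u_{\bar x}$, which is precisely the unstable horospherical direction for the element $a_0=\diag{e^{n-1},e^{-1},\ldots,e^{-1}}\in A$. A Bowen-ball packing argument in the spirit of \cite{einsiedler_invariant_2006} converts the packing of $\av{\Lambda_{q_j,K}}\asymp q_j^{(n-1)\varepsilon+o(1)}$ points inside a fixed compact piece of $K$ into the lower bound $h_\nu(a_0)\geq c(n)\cdot\varepsilon>0$, for an explicit constant $c(n)>0$ coming from the Lyapunov spectrum of $a_0$ (the unstable exponent $n$ appearing on each of the $n-1$ off-diagonal coordinates $u_{\bar x}$).

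Since $n\geq 3$, the measure rigidity theorem of Einsiedler--Katok--Lindenstrauss \cite{einsiedler_invariant_2006} now applies: by the integral decomposition of entropy, a positive-measure set of $A$-ergodic components of $\nu$ have positive $a_0$-entropy, and each such component is algebraic, i.e.\ the Haar measure on a closed orbit of a closed connected subgroup $L$ with $A\subseteq L\subseteq\SL_n(\RR)$. The case $L=\SL_n(\RR)$ gives the full Haar measure, whose support is all of $X_n$ and therefore contradicts $\supp(\nu)\subseteq K\subsetneq X_n$. For each proper intermediate $L$, a separate counting argument rules out this possibility: at most $o(\av{\Lambda_q}^{\varepsilon})$ lattices in $\Lambda_q$ can concentrate on a fixed closed $L$-orbit, since the rationals $\bar p/q\in((\nicefrac{\ZZ}{q\ZZ})^{\times})^{n-1}$ equidistribute modulo any proper algebraic subvariety as $q\to\infty$. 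This closes the contradiction.

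\textbf{Main obstacle.} The principal technical hurdle is the entropy estimate. One must carefully match the arithmetic separation scale $1/q$ with the correct Bowen-ball dynamical scale for $a_0$, and adapt the arguments of \cite{einsiedler_invariant_2006} to the present setting where $\nu_q$ is an average over the truncated orbits $x\Delta_{full,q}$ (which grow as $q\to\infty$) rather than over closed periodic orbits, so that the truncation/boundary effects must be shown not to destroy the entropy gain. The subsidiary step of ruling out the exceptional algebraic measures associated with intermediate subgroups also requires some genuine arithmetic input specific to the parametrization $u_{\bar p/q}$.
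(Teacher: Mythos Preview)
Your overall strategy---contradiction via an entropy lower bound plus Einsiedler--Katok--Lindenstrauss measure rigidity---is exactly that of the paper, and you have correctly identified the entropy estimate as the technical core. Two points of divergence deserve comment.

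First, and most importantly, your final step is both unnecessary and, as written, not correct. The form of the Einsiedler--Katok--Lindenstrauss theorem invoked in the paper (\thmref{positive_classification}) already asserts that \emph{every} $A$-invariant ergodic probability measure on $X_n$ with positive entropy is algebraic \emph{and not compactly supported}. Hence the contradiction with $\supp(\nu)\subseteq K$ is immediate once positive entropy is established; there is no need to separate the Haar case from the intermediate-$L$ cases. Your proposed counting argument---that at most $o(|\Lambda_q|^\varepsilon)$ points of $\Lambda_q$ can lie on a fixed closed $L$-orbit because $\bar p/q$ equidistributes modulo proper subvarieties---would not suffice anyway: the relevant $L$-orbits need not be fixed as $q$ varies, and the vague equidistribution statement does not give the required quantitative control.

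Second, the paper does not run the entropy argument over the full simplex $\Delta_{full,q}$. Instead it restricts to a thin tube $\ln(q)\Delta_s\subseteq\ln(q)\Delta$ around the single flow direction $\overrightarrow{\mathrm{v}}=\tfrac{1}{n}(1-n,1,\ldots,1)$ and proves (\lemref{main_lemma}, \thmref{main_pre_theorem}) the quantitative bound $h_\mu(T)\geq(n-1)\bigl(1-\tfrac{1-\ell/(n-1)}{1-(n-1)s}\bigr)$ for any weak-$*$ limit, where $\ell=\liminf\frac{\ln|\Lambda_q'|}{\ln q}$; taking $s$ small turns any $\ell>0$ into positive entropy. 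Working with the tube is precisely what allows the Bowen-ball/packing computation to be carried out cleanly along a single one-parameter flow and to match the arithmetic separation scale $1/q$ with the dynamical scale $e^{-N}$ (\lemref{separation}); a direct argument over $\Delta_{full,q}$ would need this reduction or an equivalent device.
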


\subsection{Continued fraction expansion}

In the case of $n=2$, there is a well known connection between the
$A$-orbits in $X_{2}$ and continued fraction expansions (c.f.e)
of numbers in $\left[0,1\right]$. This connection was capitalized
by the authors in \cite{david_equidistribution_2017}. We recall the
interpretation of the results above in the realm of c.f.e in order
to give some more intuition to them.

Recall that the Gauss map $T:(0,1]\to\left[0,1\right]$ is defined
by $T\left(x\right)=\frac{1}{x}-\flr{\frac{1}{x}}$ and the Gauss-Kuzmin
measure $\nuga$ on $\left[0,1\right]$ is defined by $\nuga\left(f\right):=\frac{1}{\ln\left(2\right)}\int_{0}^{1}f\left(t\right)\frac{1}{1+t}\dt$.
It is well known that $\nuga$ is a $T$-invariant ergodic probability
measure. Hence, by the pointwise ergodic theorem, for almost every
$x\in\left[0,1\right]$, the sequence $\frac{1}{N}\sum_{0}^{N-1}\delta_{T^{i}\left(x\right)}$
equidistribute, i.e. it converges to $\nuga$. 

This claim is certainly not true for all $x$. Indeed, if $\frac{p}{q}\in\QQ$
is rational, then it has a finite c.f.e of length $\len\left(\frac{p}{q}\right)\in\NN$
so that $T^{\len\left(p/q\right)}\left(\frac{p}{q}\right)=0$, and
in particular $T^{\len\left(p/q\right)+1}\left(\frac{p}{q}\right)$
is not defined. In this case we may still ask whether $\nu_{p/q}:=\frac{1}{\len\left(p/q\right)}{\displaystyle \sum_{0}^{\len\left(p/q\right)-1}}\delta_{T^{i}\left(p/q\right)}$
converge to $\nuga$ for a well chosen sequence of rationals.

The interpretation of \thmref{main_theorem} in this language is that
there exist $\Lambda_{q}'\subseteq\inv q$ such that $\limfi q{\infty}\frac{\left|\Lambda_{q}'\right|}{\varphi\left(q\right)}=1$
and for any sequence $p_{q}\in\Lambda_{q}'$ we have that $\nu_{p_{q}/q}\to\nuga$
as $q\to\infty$. 

\thmref{bounded_orbits_uniform} is not applicable for $n=2$ since
the classification of measures with positive entropy only holds for
$n>2$. On the other hand, the argument of maximal entropy still holds
and leads to the following result in \cite{david_equidistribution_2017}.
For any fixed $M$, let 
\[
\Lambda_{q,M}=\left\{ p\in\inv q\;\mid\;\frac{p}{q}=\left[0;x_{1},...,x_{len\left(p/q\right)}\right]\;,\;x_{i}\leq M\right\} .
\]
Then there exists some $\varepsilon=\varepsilon\left(M\right)>0$
such that $\left|\Lambda_{q,M}\right|=O\left(\left|\Lambda_{q}\right|^{1-\varepsilon}\right)$.
There is no known lower bound for the size of these sets, and in particular
the claim that these sets are non empty (for some fixed $M$) is known
as Zaremba's conjecture.

\subsection{Equidistribution over the adeles}

Another conceptual viewpoint of these results is achieved by lifting
the discussion to the adeles.

Let $\AA$ denote the ring of adeles over $\QQ$ and consider the
space $X_{\AA}:=\Gamma_{\AA}\backslash G_{\AA}$, where $G_{\AA}=\PGL_{n}\left(\AA\right)$
and $\Gamma_{\AA}=\PGL_{n}\left(\QQ\right)$. Let $A_{\AA}\leq G_{\AA}$
denote the subgroup of diagonal matrices. Note that the orbit $\tilde{x}_{0}A_{\AA}$
is a closed orbit (where $\tilde{x}_{0}$ denotes the identity coset
$\Gamma_{\AA}$). In particular, fixing once and for all a Haar measure
on $A_{\AA}$ we obtain a Haar measure on the quotient $stab_{A_{\AA}}\left(\tilde{x}_{0}\right)\backslash A_{\AA}$
and by pushing the latter into $X_{\AA}$ via the proper embedding
induced by the map $a\mapsto\tilde{x}_{0}a$, we obtain an $A_{\AA}$-invariant
locally finite measure $\mu_{\tilde{x}_{0}A_{\AA}}$ supported on
the closed orbit $\tilde{x}_{0}A_{\AA}$. 

Letting $\bar{u}_{\bar{p}/q}=\left(I,u_{\bar{p}/q},u_{\bar{p}/q},...\right)\in G_{\AA}$
for $\bar{p}\in\Lambda_{q}$, the orbit $\tilde{x}A_{\AA}\bar{u}_{\bar{p}/q}^{-1}$
is mapped under the natural natural epimorphism $\Gamma_{\AA}\backslash G_{\AA}\to\Gamma\backslash G$
to the union of orbits ${\displaystyle \bigcup_{x\in\Lambda_{q}}}xA$.
One can obtain the following result which easily seen to imply \thmref{main_theorem}
and actually these claims are equivalent.
\begin{thm}
\label{thm:adeles}For any choice of $\bar{p}^{(q)}\in\Lambda_{q}$
we have that $\left[\bar{u}_{\bar{p}^{(q)}/q}\mu_{\tilde{x}_{0}A_{\AA}}\right]\to\left[\mu_{\AA,Haar}\right]$
as $q\to\infty$.
\end{thm}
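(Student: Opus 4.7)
My plan is to prove Theorem \ref{thm:adeles} by establishing, as the paper notes, that it is equivalent to Theorem \ref{thm:main_theorem}. The equivalence mechanism is the natural projection $\pi : X_\AA \to X$ arising from strong approximation for $\PGL_n$, namely $\PGL_n(\AA) = \PGL_n(\QQ)\cdot(\PGL_n(\RR)\times K_f)$ with $K_f = \prod_p \PGL_n(\ZZ_p)$; this identifies $X$ with the quotient $X_\AA / K_f$.

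First I would verify two pushforward compatibilities. The first is that $\pi_*\mu_{\AA,Haar}$ is a scalar multiple of $\mu_{Haar}$, which is automatic from the $K_f$-bundle structure. The second, more interesting identity is that for every $\bar p \in \Lambda_q$ the pushforward $\pi_*(\bar u_{\bar p/q}\mu_{\tilde x_0 A_\AA})$ is a scalar multiple (independent of $\bar p$) of $\sum_{x \in \Lambda_q}\mu_{xA}$. This I would check place-by-place: $u_{\bar p/q} \in \PGL_n(\ZZ_p)$ for every $p \nmid q$ and is therefore absorbed by $K_f$, while at primes $p \mid q$ a direct orbit-counting argument, using the transitive action of $\PGL_n(\ZZ_p)$ on lines of the relevant type, shows that the closed adelic orbit distributes uniformly across the $\varphi(q)^{n-1}$ real orbits indexed by $\Lambda_q$.

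With these compatibilities in hand, the implication Theorem \ref{thm:adeles} $\Rightarrow$ Theorem \ref{thm:main_theorem}(3) is immediate from the continuity of $\pi_*$ in the projective measure topology on $\bP\cM$, and parts (1), (2) then follow from the remark. For the converse direction I would take an arbitrary projective subsequential limit $[\nu]$ of $[\bar u_{\bar p^{(q)}/q}\mu_{\tilde x_0 A_\AA}]$; by Theorem \ref{thm:main_theorem}(3) one already has $\pi_*[\nu] = [\mu_{Haar}]$, and $\nu$ is invariant under the archimedean diagonal subgroup $A_\infty$ since $\bar u_{\bar p^{(q)}/q}$ has trivial archimedean component. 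To conclude $[\nu] = [\mu_{\AA,Haar}]$ one then appeals to a rigidity statement asserting that the only $A_\AA$-invariant locally finite measure on $X_\AA$ projecting to real Haar under $\pi$ is, up to a scalar, the adelic Haar measure itself.

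The main obstacle is this last rigidity step. The subtlety is that each approximating measure is invariant not under $A_\AA$ but under the $q$-dependent conjugate $\bar u_{\bar p^{(q)}/q}^{-1} A_\AA \bar u_{\bar p^{(q)}/q}$ at the finite places, so full $A_\AA$-invariance of $\nu$ is not automatic and must be extracted from the asymptotic behaviour of these conjugates as $q \to \infty$, together with the archimedean equidistribution furnished by Theorem \ref{thm:main_theorem}. Once $A_\AA$-invariance is established, the required uniqueness reduces to an analysis of the $K_f$-fibres of $\pi$ and an application of uniqueness of Haar measure on $K_f$.
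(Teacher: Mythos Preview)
The paper itself does not give a proof of \thmref{adeles}: it asserts equivalence with \thmref{main_theorem} and defers all details to the $n=2$ argument in \cite{david_equidistribution_2017}. Your overall framework---deduce the equivalence through the projection $\pi:X_\AA\to X$ coming from strong approximation, and verify the two pushforward compatibilities---is exactly the intended route, and your treatment of the implication \thmref{adeles}$\Rightarrow$\thmref{main_theorem} is fine.

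The gap is in the converse direction, and it lies precisely where you flag it, but not for the reason you give. You propose to upgrade the automatic $A_\infty$-invariance of a subsequential limit $\nu$ to full $A_\AA$-invariance via ``the asymptotic behaviour of the conjugates $\bar u_{\bar p^{(q)}/q}^{-1}A_\AA\bar u_{\bar p^{(q)}/q}$''. This does not work: at a prime $\ell\mid q$ the conjugate of $A(\QQ_\ell)$ by $u_{\bar p/q}$ spreads out (the off-diagonal entry is $(a-b)\bar p/q$, whose $\ell$-adic absolute value blows up), so these groups do not converge in any useful sense, and there is no mechanism to extract $A_f$-invariance of $\nu$. Moreover, even granting $A_\AA$-invariance, your last sentence (``uniqueness of Haar on $K_f$'') does not follow: $A_f$-invariance only yields invariance of the fibre measures under the torus $\prod_p A(\ZZ_p)\subsetneq K_f$, which is far from forcing Haar on $K_f$.

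The point you are missing is that $A_\infty$-invariance together with $\pi_*\nu=\mu_{Haar}$ is \emph{already} enough, once one feeds in Howe--Moore ergodicity on congruence covers and the density of $\Gamma=\PGL_n(\ZZ)$ in $K_f$. Disintegrate $\nu$ over $X$; by $A_\infty$-invariance and ergodicity of $\mu_{Haar}$ the class of the fibre measure in $K_f\backslash\mathrm{Prob}(K_f)$ is a.e.\ constant, say with representative $\nu_0$ and stabiliser $H\le K_f$. If $H\ne K_f$ then (profinite groups: every proper closed subgroup lies in a proper open one) there is a proper open $H'\supseteq H$, and one obtains a measurable $A_\infty$-equivariant section $X\to(\Gamma\cap H')\backslash G_\infty$. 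The image of $\mu_{Haar}$ under this section is an $A_\infty$-invariant set of intermediate measure in a finite congruence cover, contradicting Howe--Moore ergodicity there. Hence $H=K_f$, $\nu_0$ is Haar on $K_f$, and $\nu=\mu_{\AA,Haar}$. Equivalently: test convergence against $K(N)$-invariant functions (dense in $C_c(X_\AA)$), which reduces to equidistribution on $\Gamma(N)\backslash G_\infty$; this follows from \thmref{main_theorem} on $X$ combined with ergodicity upstairs. Either way, the missing ingredient is ergodicity on congruence covers, not $A_\AA$-invariance.
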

A similar result was deduced in \cite{david_equidistribution_2017}
from the analogue of \thmref{main_theorem} for the $n=2$ case. Since
the proofs are identical we do not repeat the details here.

We believe that \thmref{adeles} should be true for more general sequences
of push-forwards. In particular for the case $n=2$, \thmref{adeles}
is equivalent to the following statement (see \cite{david_equidistribution_2017}):
Given any $g_{i}\in G_{\AA}$ such that (1) the real component of
$g_{i}$ is the identity and (2) the projection to $\nicefrac{G_{\AA}}{A_{\AA}}$
diverges, then we have the convergence in homothety $\left[g_{i}\cdot\mu_{\tilde{x}_{0}A_{\AA}}\right]\to\left[\mu_{\AA,Haar}\right]$.
We believe that some version of this statement should be true in higher
dimension, though there should be further conditions on $g_{i}$,
for example in order for $g_{i}\cdot\mu_{\tilde{x}_{0}A_{\AA}}$ not
to get ``stuck'' in a subspace.

\subsection{Acknowledgment}

The authors would like to thank Elon Lindenstrauss for valuable discussions
and acknowledge the support of ISF 357/13.

\section{\label{subsec:Axis-lattices}Axis lattices and truncated $A$-orbits}

In this paper we will be interested in the orbits of lattices of the
form $\ZZ^{n}u_{\bar{p}/q}$ where $\bar{p}\in\Lambda_{q}$ (see \defref{families}).
These lattices contain a vector on each axis, therefore the map $a\mapsto\ZZ^{n}u_{\bar{p}/q}a$
from $A$ to $X$ is proper, and we say that the orbit diverges. We
begin with a slightly more generalized discussion to understand this
phenomenon. 

For $\bar{t}\in\RR_{0}^{n}$ we denote by $a\left(\bar{t}\right)$
the diagonal matrix $diag\left(e^{t_{1}},...,e^{t_{n}}\right)\in A$
and set $a_{q}\left(\bar{t}\right):=a\left(\ln\left(q\right)\bar{t}\right)=diag\left(q^{t_{1}},...,q^{t_{n}}\right)$.
When there is no ambiguity, we shall identify between $\RR_{0}^{n}$
and the group $A$ of positive diagonal matrices. For the rest of
the paper we fix a Haar measure $\lambda$ on $A$.
\begin{defn}
Let $L\subseteq\RR^{n}$ be a unimodular lattice. 
\begin{itemize}
\item The lattice $L$ is called an \emph{axis lattice} if $L\cap\RR e_{i}\neq0,\;1\leq i\leq n$.
\item For $1\leq i\leq n$ we write $covol\left(L,i\right):=vol\left(\RR e_{i}/\left(L\cap\RR e_{i}\right)\right)$
(which is finite for axis lattices).
\item Given an axis lattice we define
\begin{align*}
A_{L} & =\left\{ \bar{t}\in\RR_{0}^{n}\;\mid\;covol\left(La\left(\bar{t}\right),i\right)\geq1\;\forall i\in\left\{ 1,...,n\right\} \right\} \\
 & =\left\{ \bar{t}\in\RR_{0}^{n}\;\mid\;t_{i}\geq-\ln\left(covol\left(L,i\right)\right)\;\forall i\in\left\{ 1,...,n\right\} \right\} .
\end{align*}
\item We shall write $\tau_{L}=\frac{1}{n}\sum_{1}^{n}\ln\left(covol\left(L,i\right)\right)$.
\end{itemize}
\end{defn}
We note that any axis lattice $x\in X$ contains in its $A$-orbit
an axis lattice $x'$ such that $covol\left(x',i\right)=covol\left(x',j\right)$
for all $i,j$ and $A_{x'}=\left\{ \bar{t}\in\RR_{0}^{n}\;\mid\;t_{i}\geq-\tau_{x}\right\} $.
It then follows that $\frac{\lambda\left(A_{x}\right)}{\tau_{x}^{\left(n-1\right)}}=\frac{\lambda\left(A_{x'}\right)}{\tau_{x}^{\left(n-1\right)}}=\lambda\left(\left\{ \bar{t}\in\RR_{0}^{n}\;\mid\;t_{i}\geq-1\right\} \right)$
is constant.

Recall that for a divergent orbit $xA$ we let $\mu_{xA}$ be the
locally finite $A$-invariant measure on $xA$ (which is the pushforward
of the Haar measure $\lambda$ on $A$). By \cite{tomanov_closed_2003}
the orbit $xA$ contains a rational lattice, so in particular $x$
must be an axis lattice. We denote by $\mu_{x}$ the probability normalization
of the restriction of $\mu_{xA}$ to $x\cdot A_{x}$. 

For a lattice $L$ we set $ht\left(L\right)=\max\left\{ \norm v_{\infty}^{-1}\;\mid\;0\neq v\in L\right\} $
and define $X^{\leq M}=\left\{ x\in X\;\mid\;ht\left(x\right)\leq M\right\} $
(and similarly we define $X^{<M},X^{\geq M}$ and $X^{>M}$). 

The following lemma tells us that when investigating the orbit $xA$,
one may focus only on $xA_{x}$.
\begin{lem}
Let $f\in C_{c}\left(X\right)$ with $supp\left(f\right)\subseteq X^{\leq M}$
and $x\in X$ with $\tau_{x}\geq M$. Then 
\[
\left|\left(\lambda\left(A_{x}\right)^{-1}\mu_{xA}-\mu_{x}\right)\left(f\right)\right|\leq n\cdot2^{n}\frac{M}{\tau_{x}}\norm f.
\]
\end{lem}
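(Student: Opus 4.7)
The plan is to reduce the measure difference to a piece supported outside the truncation set, use the support hypothesis on $f$ to constrain that piece to a thin shell around $A_x$, and estimate the Haar volume of that shell via a geometric dilation argument.

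First I would observe that by construction $\mu_x$ is the restriction of $\lambda(A_x)^{-1}\mu_{xA}$ to $xA_x$ (it is already a probability measure there), so the two measures agree on $xA_x$ and their difference is supported on $x(A \setminus A_x)$. Since the stabilizer of a lattice in $A$ is trivial in $\PGL_n$, the orbit map $a \mapsto xa$ is injective, and unfolding the pushforward gives
\[
\left(\lambda(A_x)^{-1}\mu_{xA} - \mu_x\right)(f) \;=\; \frac{1}{\lambda(A_x)}\int_{A \setminus A_x} f(xa)\,d\lambda(a).
\]

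Next, I would exploit the support hypothesis on $f$ to shrink the domain. If $f(xa)\neq 0$ then $xa \in X^{\leq M}$, so every nonzero $v \in xa$ satisfies $\norm{v}_{\infty} \geq 1/M$; applied to the shortest nonzero axis vector, this reads $covol(xa,i) \geq 1/M$ for every $i$. Writing $c_i := \ln covol(x,i)$ and $A_x^M := \{\bar t \in \RR_0^n : t_i \geq -c_i - \ln M,\ \forall i\}$, the condition rewrites as $a \in A_x^M$. Hence the integration region is contained in $A_x^M \setminus A_x$ (and is empty when $M<1$, in which case the inequality is trivial), and the quantity to bound is at most $\norm{f}\cdot \lambda(A_x^M \setminus A_x)/\lambda(A_x)$.

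To evaluate the ratio, note that $\sum c_i = n\tau_x$, so $\bar t_0 := (\tau_x - c_1, \ldots, \tau_x - c_n)$ lies in $\RR_0^n$, and translation by $-\bar t_0$ preserves Lebesgue measure on $\RR_0^n$ and sends $A_x$ and $A_x^M$ to $\{\bar s \in \RR_0^n : s_i \geq -\tau_x\}$ and $\{\bar s \in \RR_0^n : s_i \geq -\tau_x - \ln M\}$, respectively. These are dilations by factors $\tau_x$ and $\tau_x + \ln M$ of the model region $\{\bar s \in \RR_0^n : s_i \geq -1\}$, so
\[
\frac{\lambda(A_x^M \setminus A_x)}{\lambda(A_x)} \;=\; \left(1 + \frac{\ln M}{\tau_x}\right)^{n-1} - 1.
\]

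Finally, assuming $M\geq 1$ (else the claim is trivial), $\ln M \leq M \leq \tau_x$, so $y := \ln M/\tau_x \in [0,1]$, and the elementary bound $(1+y)^{n-1} - 1 \leq (2^{n-1}-1)y$ together with $\ln M \leq M$ yields the required estimate $n\cdot 2^n M/\tau_x \cdot \norm{f}$. The only step that requires some care is the translation argument identifying $A_x$ and $A_x^M$ with dilates of a single model region; the rest is bookkeeping and elementary inequalities.
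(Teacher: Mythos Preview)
Your proof is correct and follows essentially the same route as the paper: reduce to the integral over $A\setminus A_x$, use the support hypothesis to confine the integrand to a dilate of $A_x$, and bound the resulting volume ratio. The only differences are cosmetic: the paper first replaces $x$ by the $A$-translate with all $covol(x,i)$ equal (your translation by $\bar t_0$ accomplishes the same thing at the level of $\RR_0^n$), and the paper uses the cruder shell $A_{x,M}=(1+M/\tau_x)A_x$ via $e^{-M}<1/M$, whereas your $A_x^M$ corresponds to the sharper dilation factor $1+\ln M/\tau_x$.
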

\begin{proof}
Up to choosing a different representative of the $A$-orbit, we may
assume that $A_{x}=\left\{ \bar{t}\in\RR_{0}^{n}\;\mid\;\min t_{i}\geq-\tau_{x}\right\} $.
Setting $A_{x,M}=\left(1+\frac{M}{\tau_{x}}\right)A_{x}$ we obtain
that $ht\left(xa\left(\bar{t}\right)\right)>M$ for every $\bar{t}\in\RR_{0}^{n}\backslash A_{x,M}$.
We conclude that
\begin{align*}
\left|\left(\lambda\left(A_{x}\right)^{-1}\mu_{xA}-\mu_{x}\right)\left(f\right)\right| & =\left|\left(\lambda\left(A_{x}\right)^{-1}\mu_{xA}\mid_{x\left(A_{x,M}\cap A_{x}^{c}\right)}\right)\left(f\right)\right|\leq\left(\frac{\lambda\left(A_{x,M}\right)}{\lambda\left(A_{x}\right)}-1\right)\norm f\\
 & =\left[\left(1+\frac{M}{\tau_{x}}\right)^{\left(n-1\right)}-1\right]\norm f\leq\left[\left(n-1\right)\left(1+\frac{M}{\tau_{x}}\right)^{\left(n-2\right)}\frac{M}{\tau_{x}}\right]\norm f\leq n\cdot2^{n}\frac{M}{\tau_{x}}\norm f.
\end{align*}
\end{proof}
From this we conclude the following corollary which says that taking
the limit of the locally finite $A$-invariant measures and their
corresponding restrictions is basically equivalent.
\begin{cor}
\label{cor:from_locally_finite}Let $\mathcal{F}_{i}=\left\{ x_{i,1},...,x_{i,m\left(i\right)}\right\} $
be a family of axis lattices such that $\tau_{i}=\tau_{x_{i,j}}$
is independent of $1\leq j\leq m\left(i\right)$ and $\tau_{i}\to\infty$.
Then $\frac{1}{\left|\mathcal{F}_{i}\right|}\sum_{j}\mu_{x_{i,j}}\wstar\mu$
implies that $\left[\sum_{j}\mu_{x_{i,j}A}\right]\to\left[\mu\right]$.
\end{cor}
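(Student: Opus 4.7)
The plan is to derive the homothety convergence for the locally finite measures directly from the weak-$*$ convergence of their probability normalizations, using the previous lemma as the quantitative bridge. Fix arbitrary $f, g \in C_{c}(X)$ with $\mu(g) \neq 0$; both supports lie in some $X^{\leq M}$ for $M$ large enough.

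First, I would apply the preceding lemma to each $x_{i,j} \in \mathcal{F}_{i}$. For $i$ large enough that $\tau_{i} \geq M$, it yields
\[
\bigl| \lambda(A_{x_{i,j}})^{-1} \mu_{x_{i,j}A}(f) - \mu_{x_{i,j}}(f) \bigr| \leq n \cdot 2^{n} \frac{M}{\tau_{i}} \|f\|,
\]
and the analogous inequality for $g$. The key observation, recorded right after the definition of $A_{L}$, is that $\lambda(A_{x}) = \tau_{x}^{n-1} \cdot c_{n}$, where $c_{n} := \lambda\bigl(\{\bar{t} \in \RR_{0}^{n} \mid t_{i} \geq -1\}\bigr)$ depends only on $n$. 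Since $\tau_{x_{i,j}} = \tau_{i}$ is independent of $j$ by hypothesis, the quantity $\lambda_{i} := \lambda(A_{x_{i,j}}) = c_{n} \tau_{i}^{n-1}$ is also independent of $j$.

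Averaging the displayed inequality over $j \in \{1, \ldots, m(i)\}$ then gives
\[
\left| \frac{1}{\lambda_{i} |\mathcal{F}_{i}|} \sum_{j} \mu_{x_{i,j}A}(f) - \frac{1}{|\mathcal{F}_{i}|} \sum_{j} \mu_{x_{i,j}}(f) \right| \leq n \cdot 2^{n} \frac{M}{\tau_{i}} \|f\|.
\]
By the hypothesis, the second term on the left converges to $\mu(f)$, while the right side vanishes as $\tau_{i} \to \infty$. Hence $\tfrac{1}{\lambda_{i} |\mathcal{F}_{i}|} \sum_{j} \mu_{x_{i,j}A}(f) \to \mu(f)$, and the same argument applied to $g$ yields $\tfrac{1}{\lambda_{i} |\mathcal{F}_{i}|} \sum_{j} \mu_{x_{i,j}A}(g) \to \mu(g) \neq 0$. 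Taking the ratio, the common normalizing factor $\lambda_{i} |\mathcal{F}_{i}|$ cancels, producing
\[
\frac{\sum_{j} \mu_{x_{i,j}A}(f)}{\sum_{j} \mu_{x_{i,j}A}(g)} \;\longrightarrow\; \frac{\mu(f)}{\mu(g)},
\]
which is precisely the definition of $\bigl[\sum_{j} \mu_{x_{i,j}A}\bigr] \to [\mu]$ in $\bP\cM(X)$.

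There is no real obstacle here: once one notices that the constancy of $\tau_{i}$ across the family $\mathcal{F}_{i}$ forces $\lambda(A_{x_{i,j}})$ to be uniform in $j$, the lemma's quantitative control collapses the problem to a routine limit argument. The only point requiring mild care is that a single scalar normalization $\lambda_{i} |\mathcal{F}_{i}|$ witnesses the homothety convergence, which is precisely why the statement fixes $\tau_{i}$ across $\mathcal{F}_{i}$ rather than allowing it to vary with $j$.
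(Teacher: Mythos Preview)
Your argument is correct and is exactly the natural expansion of what the paper leaves implicit: the corollary is stated without proof, and the intended argument is precisely to average the preceding lemma over $j$, use that $\lambda(A_{x_{i,j}})=c_n\tau_i^{n-1}$ is independent of $j$, and pass to the ratio defining convergence in $\bP\cM(X)$. There is nothing to add or correct.
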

This corollary allow us to prove \thmref{main_theorem} by proving
its analogue with the probability measures $\mu_{x}$ instead of the
locally finite measures $\mu_{xA}$.\\

The measures $\frac{1}{\left|\mathcal{F}_{i}\right|}\sum_{j}\mu_{x_{i,j}}$
in the corollary above are averages over the elements in $\mathcal{F}_{i}$,
and each $\mu_{x}$ is an average (integral) over part of the orbit.
To study these notions we use the following notations.
\begin{defn}
Let $\mu$ be a probability measure on $X$, $\nu$ a probability
measure on $\RR_{0}^{n}$ and let $\Lambda\subseteq X$ be a finite
subset.
\begin{enumerate}
\item We define $\delta_{\Lambda}=\frac{1}{\left|\Lambda\right|}\sum_{x\in\Lambda}\delta_{x}$
where $\delta_{x}$ is the Dirac measure at $x$.
\item For $\Omega\subseteq\RR_{0}^{n}$ with positive Lebesgue measure define
$\mu^{\Omega}=\frac{1}{\lambda\left(\Omega\right)}\int_{\Omega}\left(a\left(-\bar{t}\right)\mu\right)\mathrm{d\lambda}(\bar{t})$.
\end{enumerate}
\end{defn}
Using these notations, if $x$ is an axis lattice, then $\mu_{x}=\delta_{x}^{A_{x}}$.
If $x\in\Lambda_{q}$, then it is easily verified that $A_{x}=\ln\left(q\right)\Delta_{full}$,
so in particular $A_{x}$ is only a function of $q$. It follows that
$\frac{1}{\left|\Lambda_{q}\right|}{\displaystyle \sum_{x\in\Lambda_{q}}}\mu_{x}=\delta_{\Lambda_{q}}^{\ln\left(q\right)\Delta_{full}}$,
so in order to prove \thmref{main_theorem} part (3), it is enough
to prove that $\delta_{\Lambda_{q}}^{\ln\left(q\right)\Delta_{full}}\to\mu_{Haar}$.

The permutation group $S_{n}$ acts naturally on lattices in $X_{n}$
and on elements from $A\cong\RR_{0}^{n}$. Moreover, for $x\in X_{n},\;a\in A$
and $\sigma\in S_{n}$ we have that $\sigma\left(xa\right)=\sigma\left(x\right)\sigma\left(a\right)$.
 In the next lemma we show that this action restricts to an action
on $\Lambda_{q}\Delta_{full,q}$ . In particular, for $\sigma=\left(1,...,n\right)$
the fundamental domain is $\Lambda_{q}\Delta_{q}$ where $\Delta_{q}=\ln\left(q\right)\Delta$
and 
\begin{align*}
\Delta & =\left\{ \bar{t}\in\RR_{0}^{d+1}\;\mid\;t_{i}\geq0\geq t_{1}\geq-1+t_{i},\ \forall i\geq2\right\} ,
\end{align*}
which allows us to consider in our computations the measure $\delta_{\Lambda_{q}}^{\ln\left(q\right)\Delta}$
instead of $\delta_{\Lambda_{q}}^{\ln\left(q\right)\Delta_{full}}$
(see \figref{example2}). 
\begin{lem}
\label{lem:fundamental}For any $q$ we have that $\delta_{\Lambda_{q}}^{\ln\left(q\right)\Delta_{full}}=\frac{1}{n}{\displaystyle \sum_{i=1}^{n}}\sigma_{n}^{i}\left(\delta_{\Lambda_{q}}^{\ln\left(q\right)\Delta}\right)$
where $\sigma_{n}=\left(1,...,n\right)\in S_{n}$.
\end{lem}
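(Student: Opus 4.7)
The plan is to introduce simplex coordinates on $\ln(q)\Delta_{full}$, make the $\sigma_n$-action on $\Lambda_q\cdot\Delta_{full,q}$ explicit in those coordinates, and then identify $\ln(q)\Delta$ as a fundamental domain for the induced cyclic action on the standard simplex.

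First, for $\bar t\in\RR_0^n$ I would set $U_i(\bar t)=t_i+\ln(q)\delta_{i,1}$, giving a bijection between $\ln(q)\Delta_{full}$ and the standard simplex $\Sigma=\{U\in\RR^n:U_i\geq 0,\ \sum_i U_i=\ln(q)\}$. A direct check shows that $\ln(q)\Delta$ corresponds to the wedge $W_1=\{U\in\Sigma:U_1=\max_i U_i\}$; by the $S_n$-symmetry of $\Sigma$, the wedges $W_k=\{U_k=\max_i U_i\}$ for $k=1,\ldots,n$ tile $\Sigma$ up to measure zero, each of Lebesgue volume $\lambda(\ln(q)\Delta_{full})/n$.

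Next comes the main computation: understanding $\sigma_n$ on $\Lambda_q$. The lattice $\sigma_n(L_{\bar p/q})$ is an axis lattice with axis covolumes $(1,q,1,\ldots,1)$ instead of $(q,1,\ldots,1)$; multiplying by $a(\ln q,-\ln q,0,\ldots,0)$ restores the standard covolume pattern and produces another element of $\Lambda_q$. This yields a permutation $\phi$ of $\Lambda_q$ and a fixed vector $\bar t_0=(-\ln q,\ln q,0,\ldots,0)\in\RR_0^n$ with
\[
\sigma_n(L_{\bar p/q})=L_{\phi(\bar p)/q}\cdot a(\bar t_0).
\]
A short coordinate verification shows that $\bar t\mapsto \bar t_0+\sigma_n(\bar t)$ preserves $\ln(q)\Delta_{full}$, and in $U$-coordinates becomes the cyclic shift $(U_1,\ldots,U_n)\mapsto(U_n,U_1,\ldots,U_{n-1})$, which sends $W_k$ to $W_{k+1}$ (indices mod $n$). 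In particular this already shows that the $\sigma_n$-action restricts to $\Lambda_q\cdot\Delta_{full,q}$.

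Applying $\sigma_n$ to $\delta_{\Lambda_q}^{\ln(q)\Delta}$ via $\sigma_n(xa)=\sigma_n(x)\sigma_n(a)$, then changing variables by $\bar s=\bar t_0+\sigma_n(\bar t)$ (with unit Jacobian) and reindexing by $\phi^{-1}$, I would obtain $\sigma_n^i(\delta_{\Lambda_q}^{\ln(q)\Delta})=\delta_{\Lambda_q}^{\Omega_i}$, where $\Omega_i$ corresponds in $U$-coordinates to $W_{1+i\bmod n}$. Since the $W_k$ tile $\Sigma$ and all have equal volume, summing over $i=1,\ldots,n$ yields $\sum_i\delta_{\Lambda_q}^{\Omega_i}=n\cdot\delta_{\Lambda_q}^{\ln(q)\Delta_{full}}$, and dividing by $n$ gives the claim.

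The main subtlety is the explicit computation of $\bar t_0$ (and checking that it is independent of $\bar p$), together with the verification that $\bar t_0+\sigma_n(\bar t)$ becomes a clean cyclic shift in $U$-coordinates; once these two identifications are in hand, the remaining measure-theoretic bookkeeping is mechanical.
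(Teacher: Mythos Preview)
Your proposal is correct and follows essentially the same strategy as the paper: both identify $\ln(q)\Delta$ as a fundamental domain for the cyclic $\langle\sigma_n\rangle$-action on $\Lambda_q\cdot\Delta_{full,q}$ and then average over the $n$ translates. The only difference is packaging: the paper first translates to $\tilde\Lambda_q=\Lambda_q\, a_q\bigl(\tfrac{1}{n}(1-n,1,\ldots,1)\bigr)$ so that both $\tilde\Lambda_q$ and $\tilde\Delta_{full}=\{t_i\ge -\tfrac1n\}$ are manifestly $S_n$-stable (avoiding any explicit computation of $\phi$ or $\bar t_0$), whereas you keep $\Lambda_q$ fixed and track the twisted action directly---but your $U$-coordinates differ from the paper's recentered coordinates only by the constant $\ln(q)/n$, so the two arguments are literally the same computation in different notation.
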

\begin{proof}
In order to make the problem more symmetric we consider the set $\tilde{\Lambda}_{q}=\Lambda_{q}a_{q}\left(\frac{1}{n}\left(1-n,1,...,1\right)\right)$.
Thus the lattices $\tilde{x}\in\tilde{\Lambda}_{q}$ are exactly those
with basis of the form $\left\{ q^{1/n}e_{2},...,q^{1/n}e_{n},q^{1/n}\cdot\frac{1}{q}\left(e_{1}+\sum_{j>1}p_{j}e_{j}\right)\right\} $
where $p_{j}\in\inv q$. Equivalently these are lattices generated
by $q^{1/n}\ZZ^{n}$ and a vector $q^{\frac{1-n}{n}}\sum_{1}^{n}p_{i}e_{i}$
where $p_{i}\in\inv q$. It follows that for $\tilde{x}\in\tilde{\Lambda}_{q}$
we have that $A_{\tilde{x}}=\tilde{\Delta}_{full,q}=\ln\left(q\right)\tilde{\Delta}_{full}$
where $\tilde{\Delta}_{full}=\left\{ \bar{t}\in\RR_{0}^{n}\;\mid\;t_{i}\geq-\frac{1}{n}\right\} $. 

Clearly, the sets $\tilde{\Lambda}_{q}$ and $\tilde{\Delta}_{full}$
is stable under $S_{n}$ and hence also $\tilde{\Lambda}_{q}\tilde{\Delta}_{full,q}=\Lambda_{q}\Delta_{full,q}$.
If $H\leq S_{n}$ and $\tilde{F}$ is a fundamental domain of $H$
in $\tilde{\Delta}_{full}$, then $\tilde{\Lambda}_{q}F$ is a fundamental
of $H$ in $\tilde{\Lambda}_{q}\tilde{\Delta}_{full,q}$. Using the
fact that $H$ permutes the elements in $\tilde{\Lambda}_{q}$ we
obtain that therefore 
\[
\delta_{\Lambda_{q}}^{\ln\left(q\right)\Delta_{full}}=\delta_{\tilde{\Lambda}_{q}}^{\ln\left(q\right)\tilde{\Delta}_{full}}=\frac{1}{\left|H\right|}{\displaystyle \sum_{h\in H}}\delta_{\tilde{\Lambda}_{q}}^{\ln\left(q\right)h\left(F\right)}=\frac{1}{\left|H\right|}{\displaystyle \sum_{h\in H}}h\left(\delta_{\tilde{\Lambda}_{q}}^{\ln\left(q\right)F}\right).
\]
Carrying this argument back to $\Lambda_{q}$ we need to take the
fundamental domain $F=\tilde{F}+\frac{1}{n}\left(1-n,1,...,1\right)$
instead and obtain $\delta_{\Lambda_{q}}^{\ln\left(q\right)\Delta_{full}}=\frac{1}{\left|H\right|}{\displaystyle \sum_{h\in H}}h\left(\delta_{\Lambda_{q}}^{\ln\left(q\right)F}\right)$.
If $H=\left\langle \left(1,...,n\right)\right\rangle $, then we can
choose $\tilde{F}=\left\{ \bar{t}\in\RR_{0}^{n}\;\mid\;t_{1}\geq t_{i}\geq-\frac{1}{n},\;\forall i\right\} $
and then $F=\Delta$, thus completing the proof.  
\end{proof}
\begin{cor}
\label{cor:fund_domain}If $\delta_{\Lambda_{q}}^{\ln\left(q\right)\Delta}\wstar\mu_{Haar}$,
then $\delta_{\Lambda_{q}}^{\ln\left(q\right)\Delta_{full}}\wstar\mu_{Haar}$.
\end{cor}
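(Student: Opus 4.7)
The plan is to read this corollary as an essentially immediate consequence of Lemma~\ref{lem:fundamental}, which already gives the identity
\[
\delta_{\Lambda_{q}}^{\ln(q)\Delta_{full}}=\frac{1}{n}\sum_{i=1}^{n}\sigma_{n}^{i}\bigl(\delta_{\Lambda_{q}}^{\ln(q)\Delta}\bigr).
\]
Given this, the argument reduces to two ingredients: (i) the $S_n$-action on $X_n$ is weak-$*$ continuous on probability measures (as it is induced by a homeomorphism of $X_n$), and (ii) the Haar measure $\mu_{Haar}$ is $S_n$-invariant. Once these are in hand, the conclusion follows by applying $\sigma_n^i$ to the hypothesis and averaging.

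First, I would note that since $\sigma \in S_n$ acts on $X_n$ via right multiplication by the associated permutation matrix $P_\sigma$ (modulo the identification $\PGL_n(\mathbb{Z})\backslash\PGL_n(\mathbb{R})$ which absorbs the determinantal sign), and since Haar measure on $\PGL_n(\mathbb{R})$ is right invariant, the pushforward $\sigma_\ast \mu_{Haar}$ coincides with $\mu_{Haar}$. Equivalently, the measure $\mu_{Haar}$ is the unique $\PGL_n(\mathbb{R})$-invariant probability measure on $X_n$, so any homeomorphism of $X_n$ given by right multiplication by an element of $\PGL_n(\mathbb{R})$ (including the permutation matrix representatives) preserves it.

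Next, assuming $\delta_{\Lambda_{q}}^{\ln(q)\Delta}\wstar\mu_{Haar}$ as $q\to\infty$, pushforward by the fixed homeomorphism $\sigma_n^i$ is continuous in the weak-$*$ topology on $\cM(X_n)$, so for each $i$ we have
\[
\sigma_{n}^{i}\bigl(\delta_{\Lambda_{q}}^{\ln(q)\Delta}\bigr)\wstar \sigma_{n}^{i}(\mu_{Haar})=\mu_{Haar}.
\]
A finite convex combination of weak-$*$ convergent sequences converges to the same combination of the limits, so the identity from Lemma~\ref{lem:fundamental} yields $\delta_{\Lambda_{q}}^{\ln(q)\Delta_{full}}\wstar \frac{1}{n}\sum_{i=1}^{n}\mu_{Haar}=\mu_{Haar}$, as required.

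There is no real obstacle here; the only point worth being careful about is to articulate the $S_n$-invariance of the Haar measure correctly, given that the ambient group is $\SL_n$ rather than $\GL_n$. This is handled cleanly by passing to the $\PGL_n$ picture the introduction already set up, where permutation matrices act unambiguously and right Haar invariance gives the claim immediately.
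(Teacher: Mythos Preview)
Your proof is correct and follows essentially the same route as the paper: both invoke \lemref{fundamental} and then use that the $S_n$-action preserves $\mu_{Haar}$ together with weak-$*$ continuity of pushforward. The only cosmetic difference is that the paper handles the determinant sign by explicitly replacing $P_{\sigma_n}$ with $g=\diag{-1,1,\dots,1}P_{\sigma_n}\in\SL_n(\RR)$ when $n$ is even, whereas you achieve the same end by passing to the $\PGL_n$ model.
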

\begin{proof}
Let $\sigma_{n}=\left(1,...,n\right)\in S_{n}$ and set $P_{\sigma_{n}}=\sum E_{\sigma_{n}\left(i\right),i}$
be the corresponding permutation matrix (where $E_{i,j}$ is the zero
matrix with $1$ in the $\left(i,j\right)$ coordinate). Let $g=P_{\sigma_{n}}$
if $n$ is odd and $g=diag\left(-1,1,...,1\right)P_{\sigma_{n}}$
if $n$ is even, so in any way we have that $g\in\SL_{n}\left(\RR\right)$
and $g\left(x\right)=P_{\sigma_{n}}\left(x\right)=h\left(x\right)$
for any $x\in X_{n}$. 

By \lemref{fundamental}, we have that $\delta_{\Lambda_{q}}^{\ln\left(q\right)\Delta_{full}}=\frac{1}{n}{\displaystyle \sum_{i=1}^{n}}\sigma_{n}^{i}\left(\delta_{\Lambda_{q}}^{\ln\left(q\right)\Delta}\right)$.
By the assumption $\delta_{\Lambda_{q}}^{\ln\left(q\right)\Delta}\wstar\mu_{Haar}$
which is $G$-invariant, hence $\delta_{\Lambda_{q}}^{\ln\left(q\right)\Delta_{full}}=\frac{1}{n}{\displaystyle \sum_{i=1}^{n}}\sigma_{n}^{i}\left(\delta_{\Lambda_{q}}^{\ln\left(q\right)\Delta}\right)=\frac{1}{n}{\displaystyle \sum_{i=1}^{n}}g^{i}\left(\delta_{\Lambda_{q}}^{\ln\left(q\right)\Delta}\right)\wstar\mu_{Haar}$.
\end{proof}
\begin{figure}
\begin{centering}
\includegraphics[scale=0.4]{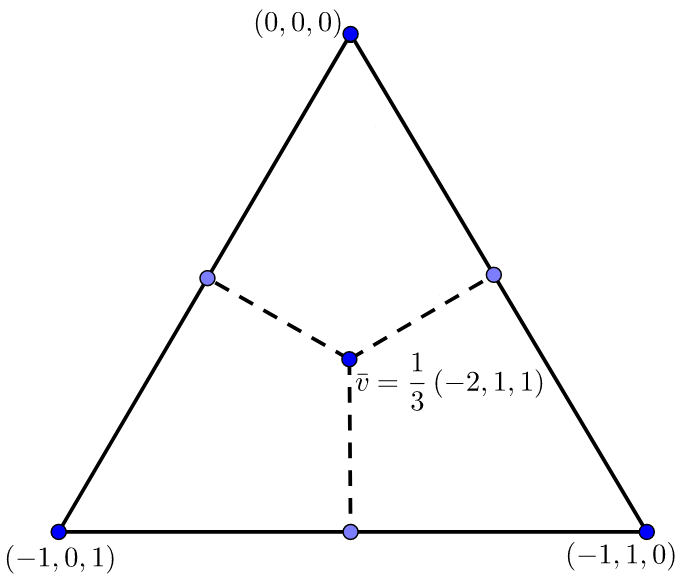}
\par\end{centering}
\caption{For $d=2$, the whole triangle is $\Delta_{full}$, while the upper
third part is $\Delta$. \label{fig:example2}}
\end{figure}

\section{Proofs of the main results}

Once we restrict our attention to $\Lambda_{q}\cdot\Delta$, the flow
induced by the action of the line spanned by $\frac{1}{n}\left(1-n,1,...,1\right)$,
which expands ``uniformly'' the set $\Lambda_{q}$, will play an
important role in our analysis. 
\begin{defn}
\label{def:one_param_measures}We set $\overrightarrow{\mathrm{v}}=\frac{1}{n}\left(1-n,1,...,1\right)$.
Let $E\left(\Delta\right)=\left\{ \left(\bar{w},r\right)\in\Delta\times\RR_{>0}\;\mid\;\bar{w}+r\cdot\overrightarrow{\mathrm{v}}\in\Delta\right\} $,
i.e. the intervals in the direction $\overrightarrow{\mathrm{v}}$
inside $\Delta$. For $\left(\bar{w},r\right)\in E\left(\Delta\right)$
and a probability measure $\mu$ on $X$ we define $\mu^{\left[\bar{w},r,q\right]}=\frac{1}{r}\int_{0}^{r}a_{q}\left(-\left(\bar{w}+r\overrightarrow{\mathrm{v}}\right)\right)\mu\dt$.
\end{defn}
 We note that for $x\in X$, taking $\mu=\delta_{x}$ , the measure
$\delta_{x}^{\left[\bar{w},r,q\right]}$ is the uniform measure on
the $\left\{ xa_{q}\left(\bar{w}+r\overrightarrow{\mathrm{v}}\right)\right\} $
as a part of the orbit $xa_{q}\left(\bar{w}\right)a_{q}\left(\RR\overrightarrow{\mathrm{v}}\right)$.
The main idea of the proof is that given a partial weak star limit
$\mu$ of some sequence $\delta_{\Lambda_{q}'}^{\left[\bar{w},r,q\right]}$,
which is $a_{q}\left(\overrightarrow{\mathrm{v}}\right)$ invariant,
we can give a lower bound on the entropy of $\mu$ by giving a lower
bound for the entropy of $\delta_{\Lambda_{q}'}^{\left[\bar{w},r,q\right]}$
for suitable choices of partitions.

\subsection{\label{subsec:entropy_bound}Entropy lower bound on measure averages}

We begin by recalling the definition of entropy and the uniqueness
of measures of maximal entropy in our setting. For more details the
reader is referred to \cite{einsiedler_entropy_nodate,walters_introduction_2000}.
\begin{defn}
Let $Y$ be any measurable space, $S:Y\to Y$ a measurable function,
$\mu$ a probability measure on $Y$ and $\pp=\left\{ P_{i}\right\} _{1}^{N}$
a finite measurable partition of $Y$. Then
\begin{itemize}
\item The entropy of $\mu$ with respect to $\pp$ is $H_{\mu}\left(\pp\right)=-\sum\mu\left(P_{i}\right)\ln\left(\mu\left(P_{i}\right)\right)$.
\item For $m_{1}<m_{2}$ integers we write $\pp_{m_{1}}^{m_{2}}=\bigvee_{m_{1}}^{m-1}S^{-i}\pp$,
and $\pp^{m}:=\pp_{0}^{m}$.
\item If $\mu$ is $S$-invariant, we set $h_{\mu}\left(S,\pp\right)=\limfi m{\infty}\frac{1}{m}H_{\mu}\left(\pp^{m}\right)$
and $h_{\mu}\left(S\right)=\sup_{\pp}h_{\mu}\left(S,P\right)$.
\end{itemize}
\end{defn}
In the setting of $X_{n}=\SL_{n}\left(\ZZ\right)\backslash\SL_{n}\left(\RR\right)$
we will always use the map $T\left(x\right)=xa_{q}\left(\overrightarrow{v}\right)$,
in which case there is a unique $T$-invariant measure of maximal
entropy. The next theorem is a combination of Theorems 7.6, 7.9 in
\cite{einsiedler_diagonal_nodate} and the fact that $G$ is generated
by $U,U^{tr}$.
\begin{thm}[see \cite{einsiedler_diagonal_nodate}]
 Let $\mu$ be a $T$-invariant probability measure on $X$. Then
$h_{\mu}\left(T\right)\leq h_{\mu_{Haar}}\left(T\right)=d$, and there
is equality if and only if $\mu=\mu_{Haar}$.
\end{thm}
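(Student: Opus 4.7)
The plan is to assemble the statement from the two cited Einsiedler--Lindenstrauss theorems together with a short Lie-theoretic generation argument. First I would compute the Lyapunov spectrum of $T$: the element $a_q(\overrightarrow{\mathrm{v}})$ acts on each root vector $E_{i,j}\in\mathfrak{sl}_n(\RR)$ by the eigenvalue $q^{t_i-t_j}$, where $\overrightarrow{\mathrm{v}}=\frac{1}{n}(1-n,1,\dots,1)$. A direct check yields $t_i-t_j=+1$ when $i\neq 1$ and $j=1$, the value $-1$ when $i=1$ and $j\neq 1$, and $0$ otherwise. Consequently the unstable horospherical subgroup for $T$ is the $(n-1)$-dimensional abelian unipotent group $U=\{I+\sum_{i\geq 2}c_iE_{i,1}\}$, the stable one is $U^{tr}$, and the sum of positive Lyapunov exponents equals $(n-1)\ln q$, which is the value of $d$ appearing in the statement. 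Theorem~7.6 of \cite{einsiedler_diagonal_nodate}, the Margulis--Ruelle inequality on homogeneous spaces, then gives $h_\mu(T)\leq (n-1)\ln q$, and the usual Rokhlin-type calculation applied to Haar measure shows that this bound is attained at $\mu_{Haar}$.

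For the characterization of the equality case I would invoke Theorem~7.9 of \cite{einsiedler_diagonal_nodate}, which is the Ledrappier--Young-type characterization asserting that equality in Margulis--Ruelle forces the conditional measures of $\mu$ along unstable leaves, i.e. the orbits of $U$, to be Haar measures on $U$. In particular $\mu$ is $U$-invariant. Applying the same argument to $T^{-1}$, whose unstable horospherical subgroup is $U^{tr}$, and noting that entropy is insensitive to inversion so $h_\mu(T^{-1})=h_\mu(T)=(n-1)\ln q$ is again maximal, I obtain $U^{tr}$-invariance of $\mu$ as well.

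To finish I would invoke the generation statement referenced in the excerpt. A short bracket computation gives $[E_{i,1},E_{1,j}]=E_{i,j}$ for $i\neq j$ and $[E_{i,1},E_{1,i}]=E_{i,i}-E_{1,1}$, so the Lie subalgebra of $\mathfrak{sl}_n(\RR)$ generated by the root spaces spanning $\mathrm{Lie}(U)$ and $\mathrm{Lie}(U^{tr})$ is all of $\mathfrak{sl}_n(\RR)$. Since $U$ and $U^{tr}$ are connected, they therefore generate $G=\SL_n(\RR)$ as a group, so $\mu$ is $G$-invariant, and by uniqueness of the $G$-invariant probability measure on $X$ one concludes $\mu=\mu_{Haar}$.

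The main obstacle is really the equality case of Margulis--Ruelle on the non-compact space $X$: deducing that maximal entropy forces Haar conditionals along unstable foliations requires the full Ledrappier--Young formalism, namely measurable partitions subordinate to the unstable foliation, entropy-via-volume-growth estimates, and careful handling of non-compactness issues. This is precisely the deep input being imported from \cite{einsiedler_diagonal_nodate}. Everything else --- the root-eigenvalue computation, the symmetry between $T$ and $T^{-1}$, and the generation of $\SL_n(\RR)$ by a pair of opposite horospherical subgroups --- is essentially bookkeeping.
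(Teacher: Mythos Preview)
Your proposal is correct and follows exactly the route the paper indicates: the paper gives no detailed proof but simply records that the theorem is ``a combination of Theorems 7.6, 7.9 in \cite{einsiedler_diagonal_nodate} and the fact that $G$ is generated by $U,U^{tr}$'', which is precisely the Margulis--Ruelle bound, the equality case forcing Haar conditionals along $U$-leaves, the symmetric argument for $U^{tr}$, and the generation of $G$. One cosmetic remark: the constant $d$ in the paper is $n-1$ (the entropy of the time-one map $x\mapsto xa(\overrightarrow{\mathrm{v}})$, which is what the subsequent entropy estimates actually use), not $(n-1)\ln q$; the discrepancy is only a normalization and does not affect your argument.
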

For $\Lambda\subseteq X$ finite and $\left(\bar{w},r\right)\in E\left(\Delta\right)$,
the measure $\delta_{\Lambda}^{\left[\bar{w},r,q\right]}$ is a double
average of measures, once for the points in $\Lambda$ and once in
the direction in which we compute the entropy. The next lemma provides
a lower bound on the entropy for such averages.
\begin{lem}
\label{lem:entropy_lower_bound}Let $Y$ be any measurable space,
let $S:Y\to Y$ be some measurable function and $\pp$ a partition
of $Y$. For a probability measure $\mu$ on $Y$ we write $\mu^{k}=\frac{1}{k}\sum_{i=0}^{k-1}S^{i}\mu$.
Then
\begin{enumerate}
\item If $\left(\Omega,\dx\right)$ is a measurable space, $x\mapsto\nu_{x}$
is a measurable map from $\Omega$ to the probability measures on
$Y$ and $\nu=\int_{\Omega}\nu_{x}\dx$, then
\[
H_{\nu}\left(\pp\right)\geq\int_{\Omega}H_{\nu_{x}}\left(\pp\right)\dx.
\]
\item For $m\leq k$ we have that 
\begin{align*}
\frac{1}{m}H_{\mu^{k}}\left(\pp^{m}\right) & \geq\frac{1}{k}H_{\mu}\left(\pp^{n}\right)-\frac{m}{k}\ln\left|\pp\right|
\end{align*}
\end{enumerate}
\end{lem}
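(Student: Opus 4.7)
The plan is to handle the two parts separately, treating part (1) as a direct Jensen's inequality argument and then using it, together with the subadditivity of partition entropy and a shift-averaging trick, to derive part (2).

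For part (1), I would use the fact that the function $\phi(t)=-t\ln t$ is concave on $[0,1]$. For each atom $P\in\pp$ one has $\nu(P)=\int_{\Omega}\nu_{x}(P)\dx$, so Jensen's inequality yields $\phi(\nu(P))\geq\int_{\Omega}\phi(\nu_{x}(P))\dx$. Summing over the finitely many atoms of $\pp$ and using Tonelli's theorem (the integrand is measurable by the assumed measurability of $x\mapsto\nu_{x}$) gives exactly $H_{\nu}(\pp)\geq\int_{\Omega}H_{\nu_{x}}(\pp)\dx$. The same statement with $\pp^{m}$ in place of $\pp$ is what will actually be used.

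For part (2), the first step is to apply part (1) to the discrete average $\mu^{k}=\frac{1}{k}\sum_{i=0}^{k-1}S^{i}\mu$, which yields
\begin{equation*}
H_{\mu^{k}}(\pp^{m})\;\geq\;\frac{1}{k}\sum_{i=0}^{k-1}H_{S^{i}\mu}(\pp^{m})\;=\;\frac{1}{k}\sum_{i=0}^{k-1}H_{\mu}\!\left(\textstyle\bigvee_{j=i}^{i+m-1}S^{-j}\pp\right).
\end{equation*}
It therefore suffices to show that the sum on the right exceeds $mH_{\mu}(\pp^{k})$ up to a controlled boundary error. To do this, for each shift $s\in\{0,1,\dots,m-1\}$ I cut the index interval $[0,k-1]$ into the chunks $[0,s-1]$, the maximal family of length-$m$ blocks $[s+jm,s+(j+1)m-1]$ that fit inside $[0,k-1]$, and a leftover tail. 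Subadditivity of $H_{\mu}$ under joins then gives $H_{\mu}(\pp^{k})\leq H_{\mu}(\pp_{s}^{s,\text{blocks}})+(\text{boundary})\cdot\ln|\pp|$. Summing over the $m$ shifts, every $i\in[0,k-m]$ appears as the left endpoint of exactly one block, so the block contributions combine to $\sum_{i=0}^{k-m}H_{\mu}(\bigvee_{j=i}^{i+m-1}S^{-j}\pp)$, while a short counting of the boundary intervals shows the total boundary contribution is at most $m(m-1)\ln|\pp|\leq m^{2}\ln|\pp|$. Dividing by $km$ and inserting into the inequality from part (1) then yields the desired estimate (with a slightly sharper constant than stated, which is irrelevant for the application).

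The only subtle point is the bookkeeping in the shift-averaging step: one has to verify that as $s$ ranges over residues mod $m$, each left endpoint $i\in[0,k-m]$ is hit exactly once while the total number of ``wasted'' positions summed over shifts is $O(m^{2})$. This is a direct counting argument (writing $k=qm+r$ and splitting according to whether $s\leq r$), and is the only place where care is needed — the rest is just concavity of $-t\ln t$ and monotonicity/subadditivity of $H_{\mu}$.
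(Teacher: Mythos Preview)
Your proposal is correct and follows essentially the same approach as the paper: part~(1) is Jensen for the concave function $-t\ln t$, and part~(2) is the standard shift-averaging trick combined with part~(1), yielding the same $m^{2}\ln|\pp|$ boundary error. The only cosmetic difference is that you apply part~(1) before the shift-averaging while the paper applies it after, but the two orderings are equivalent.
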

\begin{proof}
\begin{enumerate}
\item Since the function $\alpha:x\mapsto-x\ln\left(x\right)$ is concave
in $\left[0,1\right]$, we obtain that
\[
H_{\nu}\left(\pp\right)=\sum_{\qq\in\pp}\alpha\left(\nu\left(\qq\right)\right)=\sum_{\qq\in\pp}\alpha\left(\int\nu_{x}\left(\qq\right)\dx\right)\geq\sum_{\qq\in\pp}\int\alpha\left(\nu_{x}\left(\qq\right)\right)\dx=\int H_{\nu_{x}}\left(\pp\right)\dx.
\]
\item Write $k=lm+r\leq m\left(l+1\right)$ where $0\leq r<m$, and using
subadditivity we get that for $0\leq u\leq m-1$ we have
\begin{eqnarray*}
H_{\mu}\left(\pp^{k}\right) & \leq & H_{\mu}\left(\pp^{lm+r}\right)\\
 & \leq & \sum_{i=0}^{u-1}H_{\mu}\left(T^{-i}\pp\right)+\sum_{v=0}^{l-1}H_{\mu}(\pp_{vm+u}^{\left(v+1\right)m+u})+\sum_{i=lm+u}^{lm+m-1}H_{\mu}(T^{-i}\pp)\\
 & \leq & m\log\left|\pp\right|+\sum_{v=0}^{l-1}H_{T^{vm+u}\mu}(\pp^{m}).
\end{eqnarray*}
Summing over $0\leq u\leq m-1$ we get that 
\begin{eqnarray*}
mH_{\mu}\left(\pp^{k}\right)-m^{2}\ln\left|\pp\right| & \leq & \sum_{u=0}^{m-1}\sum_{v=0}^{l-1}H_{\left(T^{vm+u}\mu\right)}(\pp^{m})=\sum_{j=0}^{lm-1}H_{\left(T^{j}\mu\right)}(\pp_{m})\\
 & \leq & \sum_{j=0}^{k-1}H_{\left(T^{j}\mu\right)}(\pp^{m})\leq kH_{\mu^{k}}\left(\pp^{m}\right),
\end{eqnarray*}
where in the last step we used part (1). It follows that $\frac{1}{m}H_{\mu^{k}}\left(\pp^{m}\right)\geq\frac{1}{k}H_{\mu}\left(\pp^{k}\right)-\frac{m}{k}\ln\left|\pp\right|.$
\end{enumerate}
\end{proof}

\subsection{Small balls have small measure}

The second part of \lemref{entropy_lower_bound} should be thought
of as instead of taking the average of the measure $\mu$ along the
$T$-direction, we take a greater refinement of the partition $\pp$.
Thus, the atoms in $\pp^{k}$ become smaller and we would expect them
to have small $\mu$ measure, which would provide a lower bound on
the entropy since
\[
H_{\mu}\left(\pp^{k}\right)=-\sum_{P\in\pp^{k}}\mu\left(P\right)\ln\left(\mu\left(P\right)\right)\geq-\sum_{P\in\pp^{k}}\mu\left(P\right)\left[\max_{Q\in\pp^{k}}\ln\left(\mu\left(Q\right)\right)\right]=-\max_{Q\in\pp^{k}}\ln\left(\mu\left(Q\right)\right).
\]

The main problem in this argument is that the space $X$ is not compact,
hence $\pp$ has a noncompact atom, and therefore not all of the atoms
in $\pp^{k}$ are small. After defining what ``small'' is in our
context, we salvage this argument in \lemref{bowen_control} by showing
that most of the atoms in $\pp^{k}$ are small.
\begin{defn}
\label{def:small_balls}For $\eta>0,N\geq0$ we set 
\begin{align*}
V_{\eta,N} & =\left\{ \sum_{i,j=0}^{d}a_{i,j}E_{i,j}\in Mat_{n}\left(\RR\right)\;\mid\;\left|a_{i,j}\right|\leq\begin{cases}
\eta e^{-N} & i=1,\;j\geq2\\
\eta & i\geq2\;or\;i=j=1
\end{cases}\quad\quad\right\} ,\\
B_{\eta,N} & =\left(I+V_{\eta,N}\right)\cap\SL_{n}\left(\RR\right).
\end{align*}
When $N=0$ we write $V_{\eta}=V_{\eta,0}$ and $B_{\eta}=B_{\eta,0}$.
\end{defn}

\begin{defn}
\label{def:partitions}Recall that for a lattice $L\subseteq\RR^{n}$
we write $ht\left(L\right)=ht_{\infty}\left(L\right)=\max\left\{ \norm v_{\infty}^{-1}\mid0\neq v\in L\right\} $
and set $X^{\geq M}=\left\{ x\in X\ \mid\ ht(x)>M\right\} $. A (finite
measurable) partition $\pp$ of $X$ is called an \emph{$\left(M,\eta\right)$
partition} if $\pp=\left\{ P_{0},P_{1},...,P_{n}\right\} $ where
$P_{0}=X^{\geq M}$ and $P_{i}\subseteq x_{i}B_{\eta}$ , $x_{i}\in\Gamma\backslash G$
for $1\leq i\leq n$. 
\end{defn}
The next lemma originally appeared as Lemma 4.5 in \cite{einsiedler_distribution_2012}
for dimension 2, where a slight inaccuracy was corrected in \cite{david_equidistribution_2017}.
The generalization to high dimension is straight forward, and for the
reader's convenience we add its proof in \appref{bowen}.
\begin{lem}[see \cite{einsiedler_distribution_2012}]
\label{lem:bowen_control}For any $M>1$ there exists some $0<\eta_{0}=\eta_{0}\left(M,n\right)$
which satisfies the following. For any $0<\eta\leq\eta_{0}$, an $\left(M,\frac{1}{4}\eta\right)$
partition $\pp$ of $X$ and $\kappa\in\left(0,1\right)$, $N>0$,
there exists some $X'\subseteq X^{\leq M}$ such that 
\begin{enumerate}
\item $X'$ is a union of $S_{1},...,S_{l}\in\pp^{N}$, each of which is
contained in a union of at most $C^{\kappa N}$ many balls of the
form $gB_{\eta,N}$ with $g\in S_{j}$ for some absolute constant
$C$.
\item $\mu\left(X'\right)\geq1-\mu\left(X^{>M}\right)-\mu^{N}\left(X^{>M}\right)\kappa^{-1}$
for every probability measure $\mu$ on $X$.
\end{enumerate}

\end{lem}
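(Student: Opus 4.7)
\textbf{The plan} is to define $X'$ as the union of those atoms of $\pp^{N}$ whose $T$-trajectory spends only a small fraction of the time in the noncompact atom $P_{0}=X^{>M}$, bound the complement by a Markov argument, and control the shape of the remaining atoms using the hyperbolic dynamics of $T$. Set $a:=a_{q}(\overrightarrow{\mathrm{v}})$ so that $T(x)=xa$. Every atom $S\in\pp^{N}$ has a unique coding $(j_{0},\dots,j_{N-1})\in\{0,1,\dots,n\}^{N}$ with $S=\bigcap_{i=0}^{N-1}T^{-i}P_{j_{i}}$. Put $I(S):=\{0\leq i<N:j_{i}\neq0\}$ and call $S$ \emph{good} when $0\in I(S)$ (equivalently $S\subseteq X^{\leq M}$) and $|I(S)|\geq(1-\kappa)N$. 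Taking $X'$ to be the union of the good atoms makes $X'\subseteq X^{\leq M}$ automatic.

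\textbf{Measure lower bound.} The complement $X\setminus X'$ is the disjoint union of the atoms contained in $P_{0}$, contributing $\mu(X^{>M})$, and the atoms with $|I(S)|<(1-\kappa)N$, which sit inside $\{x:\#\{0\leq i<N:T^{i}x\in P_{0}\}>\kappa N\}$. Markov applied to $\sum_{i=0}^{N-1}\mathbf{1}_{P_{0}}\circ T^{i}$ bounds this latter set by $\kappa^{-1}\mu^{N}(X^{>M})$, yielding part~(2).

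\textbf{Shape of a good atom and covering by $B_{\eta,N}$.} I would first choose $\eta_{0}$ small enough that $B_{\eta_{0}}B_{\eta_{0}}^{-1}\subseteq B_{2\eta_{0}}$ and that $B_{2\eta_{0}}$ injects into $X$ around every point of $X^{\leq M}$, using Mahler's criterion to get a uniform injectivity radius on $X^{\leq M}$. Fix a good $S$ with index set $I$ and a base point $g\in S$. For any $g'=gh\in S$ and every $i\in I$, both $ga^{i}$ and $gha^{i}$ lie in the same ball $x_{j_{i}}B_{\eta/4}$, which forces $a^{-i}ha^{i}\in B_{\eta/2}$ and hence
\[
h\in\bigcap_{i\in I}a^{i}B_{\eta/2}a^{-i}.
\]
Conjugation by $a^{i}$ scales the $(1,j)$-entries ($j\geq2$) by $q^{-i}$, the $(i',1)$-entries ($i'\geq2$) by $q^{i}$, and preserves the rest. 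Since $|I|\geq(1-\kappa)N$ implies $\max(I)\geq(1-\kappa)N-1$, the dominant $(1,j)$-contraction is $q^{-(1-\kappa)N}$, which exceeds the $(1,j)$-scale of $B_{\eta,N}$ by a factor of at most $C^{\kappa N}$ for a suitable constant $C$. A greedy packing then produces at most $C^{\kappa N}$ translates $g_{k}B_{\eta,N}$ with centres $g_{k}\in S$ covering $S$, giving part~(1).

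\textbf{Main obstacle.} The delicate step is the shape-control argument: one must carefully quantify how up to $\kappa N$ ``gaps'' in $I(S)$ inflate the intersection $\bigcap_{i\in I}a^{i}B_{\eta/2}a^{-i}$ relative to the honest Bowen ball $B_{\eta,N}$, reconcile the length scale $e^{-N}$ in the definition of $B_{\eta,N}$ with the dynamical scale $q^{-N}$ of $T$ (absorbing the mismatch into $C$), and shrink $\eta_{0}$ enough for the Bowen-ball arithmetic in $\SL_{n}(\RR)$ to descend faithfully to $X$ over the precompact set $X^{\leq M}$. Once these estimates are in hand the measure bound and the coding are essentially bookkeeping.
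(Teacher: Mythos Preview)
Your construction of $X'$ and the Markov argument for part~(2) match the paper exactly. The gap is in your shape-control step for part~(1).

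You claim that for $g'=gh\in S$ and each $i\in I$, the fact that $ga^{i}$ and $gha^{i}$ lie in the same $\tfrac{\eta}{4}$-ball \emph{forces} $a^{-i}ha^{i}\in B_{\eta/2}$. This inference relies on the injectivity of $b\mapsto (ga^{i})b$ near the identity, and is only valid once you \emph{already} know that $a^{-i}ha^{i}$ lies in a fixed identity neighbourhood no larger than the injectivity radius. But from $h\in B_{\eta/2}$ alone the $(1,j)$-entry of $a^{-i}ha^{i}$ equals $h_{1,j}e^{i}$, of size up to $\tfrac{\eta}{2}e^{i}$; for large $i$ this is far outside any injectivity neighbourhood, so the small element connecting $ga^{i}$ to $g'a^{i}$ in $X$ may well differ from $a^{-i}ha^{i}$ by a nontrivial element of the stabiliser. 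No choice of $\eta_{0}$ repairs this, since $i$ ranges up to $N$. In short, the containment $h\in\bigcap_{i\in I}a^{i}B_{\eta/2}a^{-i}$ is the correct heuristic but is not justified by what you wrote, and ``shrink $\eta_{0}$ enough'' does not suffice.

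The paper closes this gap by running an induction on $m=0,\dots,N-1$, maintaining a cover $S\subseteq\bigcup_{i} h_{i,m}B_{\eta,m}$ with centres $h_{i,m}\in S$. The whole point of the inductive invariant $g\in B_{\eta,m}$ is that it forces $a^{-(m+1)}ga^{m+1}\in B_{\eta e}$, which \emph{is} inside the injectivity radius at $h_{i,m}a^{m+1}\in X^{\leq M}$; only then may one legitimately conclude $a^{-(m+1)}ga^{m+1}\in B_{\eta}$ and hence $g\in B_{\eta,m}\cap a^{m+1}B_{\eta}a^{-(m+1)}=B_{\eta,m+1}$ at a good time $m+1\in I$, with no increase in the number of balls. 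At a bad time $m+1\notin I$ there is no usable constraint, so instead each $B_{\eta,m}$-ball is subdivided into at most $C$ balls of type $B_{\eta/e,m}\subseteq B_{\eta,m+1}$ via an elementary covering lemma, paying the multiplicative cost $C$. This step-by-step refinement is not optional: it is precisely what makes the Bowen-ball arithmetic descend faithfully from $\SL_{n}(\RR)$ to $X$ across the gaps in $I$.

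(Your worry about $e^{-N}$ versus $q^{-N}$ is a non-issue: the map used in this lemma is $x\mapsto x\,a(\overrightarrow{\mathrm{v}})$, with contraction factor $e$ per step, so the scale in $B_{\eta,N}$ matches the dynamics exactly.)
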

Recall that we want to give a lower bound on the entropy where the
idea is to show that most of the atoms in $\pp^{N}$ have small measure.
By \lemref{bowen_control}, most of these atoms are contained in a
union of small balls $zB_{\eta,N}$, hence it is important to show
that these balls have small measure.

Given $\Lambda\subseteq\Lambda_{q}$ and $\left(\bar{w},r\right)\in\Delta$,
the measures $\delta_{\Lambda}^{\left[\bar{w},r,q\right]}$ are averages
in the direction of the flow $T$ and in the ``horospherical'' direction
represented by $\Lambda$. While the $T$-average allows us to talk
about $T$-invariant measure, the upper bound on the measure $\delta_{\Lambda}^{\left[\bar{w},r,q\right]}\left(zB_{\eta,N}\right)$
will arise from the horospherical direction.
\begin{lem}
\label{lem:separation} There exists a constant $C\left(n\right)\geq1$
such that the following is true. Given $x\in X$, $0<\eta<\min\left\{ \frac{1}{4n},\frac{1}{ht\left(x\right)}\right\} $
and $q\in\NN$, for any $\bar{t}\in\Delta$ and $0\leq N\leq\ln\left(q\right)\left(1+t_{0}-\left({\displaystyle \max_{i\geq1}}t_{i}\right)\right)$
we have that $\left|\Lambda_{q}a_{q}\left(\bar{t}\right)\cap xB_{\eta,N}\right|\leq C\left(n\right)\left(\frac{q}{e^{N}}\right)^{n-1}$.
\end{lem}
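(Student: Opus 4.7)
The plan is to realize $\Lambda_{q}a_{q}(\bar{t})$ as a discrete subset of a unipotent torus and then count grid points inside a box via a density-times-volume estimate.

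First, using the commutation $u_{\bar{p}/q}a_{q}(\bar{t})=a_{q}(\bar{t})u_{\bar{y}(\bar{p})}$ with $y_{j-1}(\bar{p}):=p_{j-1}q^{t_{j}-t_{1}-1}$, each orbit point becomes $L_{\bar{p}}a_{q}(\bar{t})=x_{0}'u_{\bar{y}(\bar{p})}$, where $x_{0}':=\Gamma a_{q}(\bar{t})$. A direct computation shows that the stabilizer of $x_{0}'$ in $U=\set{u_{\bar{y}}}$ corresponds, in $\bar{y}$-coordinates, to the lattice $\Xi:=\bigoplus_{j=2}^{n}q^{t_{j}-t_{1}}\bZ$, so the $U$-orbit of $x_{0}'$ is a torus with fundamental domain of $\bar{y}$-volume $\prod_{j=2}^{n}q^{t_{j}-t_{1}}=q^{-nt_{1}}$ (using $\sum t_{i}=0$). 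The points $\set{\bar{y}(\bar{p}):\bar{p}\in(\inv{q})^{n-1}}$ lie on the rescaled grid $\bigoplus_{j}q^{t_{j}-t_{1}-1}\bZ$ and contain $\varphi(q)^{n-1}$ elements per period, so their density on the torus is at most $q^{n-1+nt_{1}}$.

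Second, the only ``$U$-direction'' entries of $B_{\eta,N}$ are the $(1,j)$-entries with $j\geq 2$, which are bounded by $\eta e^{-N}$; hence $U\cap B_{\eta,N}$ projects to a box of $\bar{y}$-volume $(2\eta e^{-N})^{n-1}$. Granting that $xB_{\eta,N}$ meets the torus in only $C_{0}(n)=O_{n}(1)$ connected sheets, each a single translate of this box, the density-times-volume product gives
\[
|\Lambda_{q}a_{q}(\bar{t})\cap xB_{\eta,N}|\leq C_{0}(n)(2\eta e^{-N})^{n-1}q^{n-1+nt_{1}}=C_{0}(n)2^{n-1}\eta^{n-1}q^{nt_{1}}(q/e^{N})^{n-1}\leq C(n)(q/e^{N})^{n-1},
\]
using $\eta\leq 1$ (from $\eta<1/(4n)$) and $t_{1}\leq 0$ (from $\bar{t}\in\Delta$). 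The hypothesis $N\leq\ln(q)(1+t_{1}-\max_{i}t_{i})$ ensures $q/e^{N}\geq q^{\max_{i}t_{i}-t_{1}}\geq 1$, so the rounding errors from boxes that contain no grid point in some direction are absorbed into the constant.

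The main obstacle is the $O_{n}(1)$-sheet claim. The hypothesis $\eta<1/\on{ht}(x)$ makes $h\mapsto xh$ injective on $B_{\eta}$, so for any pair $\bar{p}_{0},\bar{p}$ contributing to the count, there are unique $h_{\bar{p}_{0}},h_{\bar{p}}\in B_{\eta,N}$ with $L_{\bar{p}_{*}}a_{q}(\bar{t})=xh_{\bar{p}_{*}}$, and $L_{\bar{p}}a_{q}(\bar{t})=L_{\bar{p}_{0}}a_{q}(\bar{t})\cdot h$ where $h:=h_{\bar{p}_{0}}^{-1}h_{\bar{p}}\in B_{2\eta,N}$ (using $\eta<1/(4n)$ to control the product). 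Lifting to $\SL_{n}(\bR)$ produces $\gamma\in\SL_{n}(\bZ)$ with $u_{\bar{p}/q}=\gamma u_{\bar{p}_{0}/q}(a_{q}(\bar{t})ha_{q}(-\bar{t}))$. The integrality of $\gamma$, together with the entry-wise bounds on $a_{q}(\bar{t})V_{2\eta,N}a_{q}(-\bar{t})$ and the inequality $t_{i}-t_{j}\leq 1$ for $i\geq 2, j=1$ coming from $\bar{t}\in\Delta$, forces $\gamma_{ii}=1$ along the diagonal and restricts the remaining entries of $\gamma$ to an integer box whose cardinality depends only on $n$, proving $C_{0}(n)=O_{n}(1)$.
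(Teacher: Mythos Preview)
Your overall strategy---realizing $\Lambda_{q}a_{q}(\bar t)$ as a grid on the unipotent torus $x_{0}'U$ and bounding the count by (grid density)$\times$(volume of each sheet)$\times$(number of sheets)---is sound, and with correct bookkeeping it reproduces the paper's bound. The gap is in your last paragraph: the claim $C_{0}(n)=O_{n}(1)$ is false.

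Concretely, from $u_{\bar p/q}=\gamma\,u_{\bar p_{0}/q}\,(I+W')$ with $W'=a_{q}(\bar t)Wa_{q}(-\bar t)$ and $W\in V_{2\eta,N}$, the conjugation gives $W'_{ij}=q^{t_{i}-t_{j}}W_{ij}$. The inequality $t_{i}-t_{1}\le 1$ that you invoke only yields $|W'_{i1}|\le 2\eta\,q^{t_{i}-t_{1}}\le 2\eta q$ for $i\ge 2$; it does \emph{not} make these entries $O_{n}(1)$. Solving row $i\ge 2$ for $\gamma_{i}$ gives $\gamma_{i1}=-W'_{i1}$ up to lower order, so $\gamma_{i1}$ ranges over an interval of integers of length $\asymp\eta\,q^{t_{i}-t_{1}}$, and similarly the other entries of $\gamma$ below the first row are only bounded by $O(\eta q)$. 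Hence the number of admissible $\gamma$'s---your sheet count---grows with $q$; in fact it is of order $\prod_{i\ge 2}(O(1)q^{t_{i}})^{n}=O_{n}(1)\,q^{-nt_{1}}$, not $O_{n}(1)$.

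This is exactly the factor the paper tracks: its proof first counts the rows $\gamma_{2},\dots,\gamma_{n}$ (your ``sheets''), obtaining $\prod_{i\ge2}(2^{4}q^{t_{i}})^{n}$, and then for each such choice counts the first-row data, obtaining $\prod_{i\ge2}2^{6}q^{1+t_{1}-t_{i}}e^{-N}$. The two products multiply to $C(n)(q/e^{N})^{n-1}$ precisely because $\sum t_{i}=0$ forces the $q^{-nt_{1}}$ from the sheet count to cancel the $q^{nt_{1}}$ you threw away via $t_{1}\le 0$. In your write-up that cancellation never happens: you discard $q^{nt_{1}}$ and then separately assert $C_{0}(n)=O_{n}(1)$, so the argument as written is incomplete. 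If you replace the $O_{n}(1)$ claim by an honest count of the $\gamma$'s (equivalently, of lattice points of $\bZ^{n}g$ in boxes of side $\asymp\eta q^{t_{i}}$, using $\eta<1/\on{ht}(x)$), your torus picture goes through and matches the paper's bound.
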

\begin{proof}
Writing $x=\Gamma g$ where $g\in\SL_{n}\left(\RR\right)$, an upper
bound for $\left|\Lambda_{q}a_{q}\left(\bar{t}\right)\cap xB_{\eta,N}\right|$
is given by the number of pairs $\left(\gamma,\bar{p}\right)\in\SL_{n}\left(\ZZ\right)\times\Lambda_{q}$
such that 
\[
u_{-\bar{p}/q}\gamma g\in a_{q}\left(\bar{t}\right)B_{\eta,N}^{-1}\subseteq a_{q}\left(\bar{t}\right)B_{2\eta,N},
\]
where $B_{\eta,N}^{-1}\subseteq B_{2\eta,N}$ by \lemref{inverse_ball}. 

Letting $\gamma_{i}$ be the rows of $\gamma$, considering the $i$'th
row of the inclusion above we obtain that 
\begin{align}
\gamma_{0}z-\sum_{1}^{d}\frac{p_{i}}{q}\gamma_{i}g & \in q^{t_{0}}\left(e_{0}+2\eta\left(-1,1\right)\times2\eta\left(-e^{-N},e^{-N}\right)^{n-1}\right)\label{eq:first_row}\\
\gamma_{i}g & =q^{t_{i}}\left(e_{i}+v_{i}\right),\;v_{i}\in2\eta\left(-1,1\right)^{n},\quad i=2,...,n.\label{eq:rest_rows}
\end{align}
Suppose that $\gamma_{0}'\in\ZZ^{n}$ such that $\gamma_{1}',\gamma_{2},...,\gamma_{n}$
form a matrix in $\SL_{n}\left(\ZZ\right)$. Writing $\gamma_{0}'=\sum_{1}^{n}k_{i}\gamma_{i},\ k_{i}\in\ZZ$,
we get that $1=\det\left(\gamma_{1}'\mid\gamma_{2}\mid\cdots\mid\gamma_{n}\right)=\det\left(k_{1}\gamma_{1}\mid\gamma_{2}\mid\cdots\mid\gamma_{n}\right)=k_{1}$.
Hence, once the $\gamma_{i},\;i=2,...,n$ are chosen, any additional
solution for \eqref{first_row} has the form $\gamma_{1}g+\sum_{2}^{n}\left(k_{i}-\frac{p_{i}}{q}\right)\gamma_{i}g$
where $\gamma_{1}$ is also fixed. Denoting by $\pi:\RR^{n}\to\RR^{n-1}$
the projection on the hyperplane corresponding to the last $n-1$
coordinate and applying it to \eqref{first_row} we obtain
\begin{equation}
\sum_{1}^{d}m_{i}q^{t_{i}}\left(e_{i}+\pi\left(v_{i}\right)\right)\in-q\pi\left(\gamma_{0}g\right)+2\eta q^{t_{0}}q\left(-e^{-N},e^{-N}\right)^{n-1},\ \ m_{i}=k_{i}q-p_{i}.\label{eq:first_row_II}
\end{equation}

To summarize, we wish to bound from above the number of $\gamma_{i}\in\ZZ^{n},\;m_{i}\in\ZZ$
for $i=2,...,n$ which satisfy \eqref{rest_rows} and \eqref{first_row_II}.
Equation (\ref{eq:rest_rows}) merely asks how many lattice points
$\ZZ^{n}g$ has in $q^{t_{i}}\left(e_{i}+2\eta\left(-1,1\right)^{n}\right)$.
Letting $h=ht_{\infty}\left(\ZZ^{d+1}g\right)$, we obtain that any
set $u+\left[0,\frac{1}{3h}\right]^{d+1}$ contains at most one point
from $\ZZ^{n}g$, hence $q^{t_{i}}\left(e_{i}+2\eta\left(-1,1\right)^{n}\right)$
contains at most $\left\lceil 12h\eta q^{t_{i}}\right\rceil ^{n}\leq\left\lceil 12q^{t_{i}}\right\rceil ^{n}\leq\left(2^{4}q^{t_{i}}\right)^{n}$
such points. For \eqref{first_row_II}, we first multiply by the diagonal
matrix $a=diag\left(q^{-t_{2}},...,q^{-t_{n}}\right)$ to reduce to
the problem of counting the lattice points of $L=span_{\ZZ}\left\{ e_{i}+\pi\left(v_{i}\right)q^{t_{i}}a\right\} $
in $-q\pi\left(\gamma_{0}z\right)a+2\eta q^{t_{1}}\frac{q}{e^{N}}\prod_{2}^{n}\left(-q^{-t_{i}},q^{-t_{i}}\right)$.
The lattice $L$ is $\ZZ^{d}\left(I+a^{-1}g_{\eta}a\right)$ where
$,g_{\eta}\in Mat_{n}\left(\RR\right),\norm{g_{\eta}}_{\infty}\leq2\eta<\frac{1}{2n}$.
We shall see in \lemref{ht_conjugation} that it implies that $h'=ht_{\infty}\left(L\right)\leq2$.
It follows that \eqref{first_row_II} has at most $\prod_{2}^{n}\left\lceil 12h'\eta q^{t_{1}-t_{i}}\frac{q}{e^{N}}\right\rceil \leq\prod_{2}^{n}\left(2^{6}q^{t_{1}-t_{i}}\frac{q}{e^{N}}\right)$
solutions, where we used the fact that $q^{t_{1}-t_{i}}\frac{q}{e^{N}}\geq1$.
Thus, the total number of solutions is bounded by
\[
\prod_{2}^{n}\left(2^{4}q^{t_{i}}\right)^{n}\left(2^{6}q^{t_{1}-t_{i}}\frac{q}{e^{N}}\right)=C\left(n\right)\left(\frac{q}{e^{N}}\right)^{n-1}.
\]
\end{proof}
\begin{lem}
\label{lem:ht_conjugation}Let $g\in Mat_{d}\left(\RR\right)$ with
$\norm g_{\infty}<\frac{1}{2d}$ and $t_{1},...,t_{d}\in\RR$. Then
$ht_{\infty}\left(\ZZ^{d}\cdot\left(I+a\left(\bar{t}\right)ga\left(-\bar{t}\right)\right)\right)\leq2$.
\end{lem}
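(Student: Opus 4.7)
The plan is to show that every nonzero vector in the lattice has $\|\cdot\|_\infty > 1/2$, which immediately yields $\operatorname{ht}_\infty \leq 2$. The key idea is to pick the coordinate where the weighted size of $k$ is maximal and show that this coordinate of $v$ cannot be cancelled by the perturbation.

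Write $M = I + a(\bar t)\, g\, a(-\bar t)$, so that the $(i,j)$-entry of $M - I$ equals $e^{t_i - t_j} g_{ij}$. A nonzero vector in $\mathbb{Z}^d M$ has the form $v = kM$ with $k \in \mathbb{Z}^d \setminus \{0\}$, and
\[
v_j = k_j + \sum_{i=1}^{d} k_i\, e^{t_i - t_j}\, g_{ij}.
\]
Let $i^*$ be any index maximizing $|k_i|\, e^{t_i}$ (so in particular $|k_{i^*}| \geq 1$). Evaluating at $j = i^*$ and using $|k_i|\, e^{t_i - t_{i^*}} \leq |k_{i^*}|$ together with $|g_{i i^*}| \leq \|g\|_\infty$, the perturbation term is bounded by
\[
\left| \sum_{i=1}^{d} k_i\, e^{t_i - t_{i^*}}\, g_{i i^*} \right| \;\leq\; d\, \|g\|_\infty\, |k_{i^*}| \;<\; \tfrac{1}{2}\, |k_{i^*}|,
\]
using the hypothesis $\|g\|_\infty < \tfrac{1}{2d}$. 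Therefore $|v_{i^*}| > \tfrac{1}{2}|k_{i^*}| \geq \tfrac{1}{2}$, so $\|v\|_\infty > 1/2$ for every nonzero $v \in \mathbb{Z}^d M$, giving $\operatorname{ht}_\infty(\mathbb{Z}^d M) \leq 2$.

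There isn't really a hard step here: the only subtlety is making the right choice of index $i^*$ so that the diagonal conjugation by $a(\bar t)$ becomes harmless. A naive choice (say, $i^*$ maximizing $|k_i|$ alone) would not work because the factors $e^{t_i - t_{i^*}}$ could be enormous; weighting by $e^{t_i}$ in the selection kills exactly these factors. Once this observation is made, the estimate is a one-line triangle inequality and the bound $d \|g\|_\infty < 1/2$ is used precisely to ensure the leading term $k_{i^*}$ dominates.
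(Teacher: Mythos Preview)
Your proof is correct and rests on the same key observation as the paper's: the decisive index is the one maximizing $|k_i|\,e^{t_i}$, at which the conjugation factors $e^{t_i-t_{i^*}}$ are all bounded by $1$. The paper reaches this by a contradiction argument phrased in terms of a directed graph (if $\|v\|_\infty<\tfrac12$ then every nonzero index admits a strictly larger one in the $|k_i|e^{t_i}$-ordering, which is impossible), whereas you simply pick the maximizer directly---your presentation is the more streamlined of the two.
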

\begin{proof}
We note first that $\norm{g^{m}}_{\infty}\leq d^{m-1}\norm g_{\infty}^{m}\leq\frac{1}{d}\left(\frac{d}{2d}\right)^{m}$,
so that $\left(I+g\right)^{-1}=\sum_{0}^{\infty}\left(-g\right)^{m}$
is well defined. It follows that $a\left(\bar{t}\right)\left(I+g\right)a\left(-\bar{t}\right)$
is invertible, hence $\ZZ^{d}\cdot\left(I+a\left(\bar{t}\right)ga\left(-\bar{t}\right)\right)$
is indeed a lattice in $\RR^{d}$.

Let $\bar{k}=\left(k_{1},...,k_{d}\right)\in\ZZ^{d}$ such that $\norm{\bar{k}\left(I+a\left(\bar{t}\right)ga\left(-\bar{t}\right)\right)}_{\infty}<\frac{1}{2}$.
Computing the $i$-th coordinate produces 
\[
\frac{1}{2}>\left|k_{i}+\sum_{1}^{d}k_{j}e^{t_{j}-t_{i}}g_{j,i}\right|\geq\left|k_{i}\right|-\frac{1}{2d}\sum_{1}^{d}\left|k_{j}\right|e^{t_{j}-t_{i}}.
\]
If $k_{i}\neq0$, i.e. $\left|k_{i}\right|\geq1$, then $\left|k_{i}\right|\leq2\left(\left|k_{i}\right|-\frac{1}{2}\right)<\frac{1}{d}{\displaystyle \sum_{j=1}^{d}}\left|k_{j}\right|e^{t_{j}-t_{i}}$.
Thus there must be some $j=j\left(i\right)$ such that $\left|k_{i}\right|<\left|k_{j}\right|e^{t_{j}-t_{i}}$.
Consider the directed graph on $\left\{ i\mid\left|k_{i}\right|\neq0\right\} $,
with an edge $i\to j$ if $\left|k_{i}\right|<\left|k_{j}\right|e^{t_{j}-t_{i}}$.
As we have just seen, each vertex has an outgoing edge, so the graph
must contain a directed cycle. Moreover, the defining condition of
the edges imply that the graph is transitive, and hence it must has
a self loop at some vertex $i$. This is a contradiction since otherwise
$\left|k_{i}\right|<\left|k_{i}\right|e^{t_{i}-t_{i}}=\left|k_{i}\right|$.
We conclude that $\bar{k}=\bar{0}$, or equivalently $ht_{\infty}\left(\ZZ^{d}\cdot\left(I+a\left(\bar{t}\right)ga\left(-\bar{t}\right)\right)\right)\leq2$.
\end{proof}

\subsection{\label{subsec:Proof_of_bounded_orbits}Proof of \thmref{bounded_orbits_uniform}}

We now combine all of the results so far to provide a lower bound
on the entropy of partial weak star limits $\delta_{\Lambda}^{\left[\bar{w},r,q\right]}$
(which are $a_{q}\left(\overrightarrow{v}\right)$-invariant where
$\overrightarrow{v}=\frac{1}{n}\left(1-n,1,...,1\right)$). Recall
that if a probability measure $\mu$ is a weak star limit of a sequence
of probability measures $\mu_{i}$, then for any $\varepsilon>0$
there is some $M>0$ such that $\mu_{i}\left(X^{>M}\right)<\varepsilon$
for all $i$ big enough. We say that the sequence $\mu_{i}$ \emph{doesn't
exhibit escape of mass }if every partial weak limit is a probability
measure. The next lemma shows that if $\Lambda\subseteq\Lambda_{q}$
is big, and there is no escape of mass, then we get a good lower bound
on the entropy.
\begin{lem}
\label{lem:main_lemma}Let $\pp$ be any $\left(M,\eta\right)$-partition
with $\eta$ small enough (as a function of $M,n$). Let $\Lambda\subseteq\Lambda_{q}$,
$\left(\bar{w},r\right)\in E\left(\Delta\right)$ such that $q^{\left(n-1\right)\left(1-r\right)}\leq\left|\Lambda\right|$
and $r\ln\left(q\right)>1$. Then
\[
\frac{1}{m}H_{\delta_{\Lambda}^{\left[\bar{w},r,q\right]}}\left(\pp_{m}\right)\geq\left[\left(n-1\right)-\frac{1}{r}\left(\left(n-1\right)-\frac{\ln\left|\Lambda\right|}{\ln\left(q\right)}\right)\right]-O\left(\sqrt{\delta_{\Lambda}^{\left[\bar{w},r,q\right]}\left(X^{>M}\right)}+\delta_{\Lambda a_{q}\left(\bar{w}\right)}\left(X^{>M/e}\right)+\frac{m\ln\left|\pp\right|}{r\ln\left(q\right)}\right).
\]
\end{lem}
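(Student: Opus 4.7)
The plan is to combine three ingredients: the entropy-of-averages bound from Lemma \ref{lem:entropy_lower_bound}(2), the Bowen-ball covering from Lemma \ref{lem:bowen_control}, and the separation estimate from Lemma \ref{lem:separation}. First, discretize the integral defining $\delta_{\Lambda}^{\left[\bar{w},r,q\right]}$ as (approximately) a Birkhoff sum $\frac{1}{N}\sum_{i=0}^{N-1} T^{i} \mu_{0}$, where $\mu_{0} = \delta_{\Lambda a_{q}(\bar{w})}$ and $N = \lfloor r\ln(q) \rfloor$ is the number of unit-step iterates of $T$ needed to traverse the parameter interval $[0, r]$. The Riemann-sum approximation is controlled because moving the continuous parameter by $\tfrac{1}{\ln(q)}$ corresponds to acting by a bounded matrix; the cost of this discretization will be absorbed into replacing $X^{>M}$ by the slightly larger $X^{>M/e}$ in the escape-of-mass bookkeeping.

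Applying Lemma \ref{lem:entropy_lower_bound}(2) with $k=N$ gives
\[
\frac{1}{m}H_{\delta_{\Lambda}^{\left[\bar{w},r,q\right]}}(\pp^{m}) \;\geq\; \frac{1}{N}H_{\mu_{0}}(\pp^{N}) - \frac{m}{N}\ln|\pp|,
\]
and the second summand matches the $\frac{m\ln|\pp|}{r\ln(q)}$ error in the statement. The problem then reduces to bounding $\frac{1}{N}H_{\mu_{0}}(\pp^{N})$ from below. For this, apply Lemma \ref{lem:bowen_control} with a parameter $\kappa \in (0,1)$ (to be optimized) to produce a good set $X'$ which is a union of atoms $S_{1},\ldots,S_{\ell} \in \pp^{N}$, each contained in at most $C^{\kappa N}$ balls of the form $gB_{\eta,N}$, and with $\mu_{0}(X') \geq 1 - \mu_{0}(X^{>M}) - \delta_{\Lambda}^{\left[\bar{w},r,q\right]}(X^{>M})\kappa^{-1}$.

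To bound $\mu_{0}$ on each such ball, apply Lemma \ref{lem:separation} with $\bar{t} = \bar{w}$: the hypothesis $(\bar{w},r) \in E(\Delta)$ is exactly the condition $N \leq \ln(q)(1 + w_{0} - \max_{i\geq 1} w_{i})$, yielding $|\Lambda a_{q}(\bar{w}) \cap gB_{\eta,N}| \leq C(n)(q/e^{N})^{n-1}$. Hence
\[
\mu_{0}(S_{j}) \;\leq\; C^{\kappa N} \cdot \frac{C(n)(q/e^{N})^{n-1}}{|\Lambda|},
\]
so $-\ln\mu_{0}(S_{j}) \geq (n-1)N - (n-1)\ln(q) + \ln|\Lambda| - O(\kappa N)$ uniformly over good atoms.

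Restricting $H_{\mu_{0}}(\pp^{N}) \geq -\sum_{j} \mu_{0}(S_{j})\ln\mu_{0}(S_{j})$ to the good atoms (dropping the nonnegative bad contributions) and dividing by $N = r\ln(q)$ yields
\[
\frac{1}{N}H_{\mu_{0}}(\pp^{N}) \;\geq\; \mu_{0}(X') \cdot \left[(n-1) - \frac{1}{r}\left((n-1) - \frac{\ln|\Lambda|}{\ln(q)}\right) - O(\kappa)\right].
\]
The hypothesis $q^{(n-1)(1-r)} \leq |\Lambda|$ keeps the bracket bounded by $n-1$, so the factor $\mu_{0}(X')$ only costs an additive $O(\mu_{0}(X^{>M}) + \delta_{\Lambda}^{\left[\bar{w},r,q\right]}(X^{>M})\kappa^{-1})$. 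Optimizing $\kappa = \sqrt{\delta_{\Lambda}^{\left[\bar{w},r,q\right]}(X^{>M})}$ produces the $\sqrt{\,\cdot\,}$ error, while the inequality $\mu_{0}(X^{>M}) \leq \delta_{\Lambda a_{q}(\bar{w})}(X^{>M/e})$ (after the discretization inflation by a factor of $e$) supplies the remaining escape-of-mass term. The main obstacle is controlling the discretization uniformly in $i$: one must check both that Lemma \ref{lem:separation} remains applicable along the entire orbit segment (which is precisely what $(\bar{w},r) \in E(\Delta)$ guarantees), and that the Riemann approximation only enlarges the excursion set by a bounded multiplicative factor, producing $X^{>M/e}$ in place of $X^{>M}$ in the final expression.
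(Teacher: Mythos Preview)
Your outline matches the paper's proof in every structural respect: discretize, apply Lemma~\ref{lem:entropy_lower_bound}(2), then Lemmas~\ref{lem:bowen_control} and~\ref{lem:separation}, and finally optimize $\kappa=\sqrt{\delta_\Lambda^{[\bar w,r,q]}(X^{>M})}$. The one step that does not go through as written is the discretization. Lemma~\ref{lem:entropy_lower_bound}(2) applies only to an \emph{exact} discrete Birkhoff average $\tfrac{1}{N}\sum_{i=0}^{N-1}T^{i}\mu_{0}$, whereas $\delta_\Lambda^{[\bar w,r,q]}$ is a continuous integral and is not equal to such a sum. Saying the two are ``approximately'' equal is not enough: entropy is nonlinear, and a shift by $a(x\overrightarrow v)$ with $x\in[0,1]$ can move mass across the boundaries of the (very fine) atoms of $\pp^{m}$, so there is no cheap total-variation bound available.

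The paper fixes this with an exact identity rather than an approximation: it writes
\[
\delta_\Lambda^{[\bar w,r,q]}=\frac{\lfloor r\ln q\rfloor}{r\ln q}\int_0^1 \delta_{\Lambda a(x\overrightarrow v)}^{\{\bar w,r,q\}}\,dx+(\text{tail of mass }\le\tfrac{1}{r\ln q}),
\]
i.e.\ the continuous average is an integral over the fractional offset $x\in[0,1]$ of genuine discrete Birkhoff sums with base measure $\delta_{\Lambda a_q(\bar w)a(x\overrightarrow v)}$. Concavity (Lemma~\ref{lem:entropy_lower_bound}(1)) then pushes $H(\cdot\,;\pp^m)$ inside the $dx$-integral, after which part~(2) applies legitimately to each integrand. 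This is also the true source of the $M/e$: the base measure is now $\delta_{\Lambda a_q(\bar w)a(x\overrightarrow v)}$ with $|x|\le 1$, and such a shift changes $ht(\cdot)$ by at most a factor $e$, giving $\delta_{\Lambda a_q(\bar w)a(x\overrightarrow v)}(X^{>M})\le \delta_{\Lambda a_q(\bar w)}(X^{>M/e})$. In your version, with $\mu_0=\delta_{\Lambda a_q(\bar w)}$ unshifted, the inequality $\mu_0(X^{>M})\le\delta_{\Lambda a_q(\bar w)}(X^{>M/e})$ is trivial and the factor $e$ does no work --- a symptom that the discretization has not actually been carried out. Once you insert this integral-over-offsets step, the rest of your argument is exactly the paper's.
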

\begin{proof}
Assume that $\eta$ is small enough and satisfies the conditions of
\lemref{bowen_control} and \lemref{separation} for all $z\in X^{\leq M}$.

First we move from the continuous measure $\delta_{\Lambda}^{\left[\bar{w},r,q\right]}$
to its discrete version defined by $\delta_{\Lambda}^{\left\{ \bar{v},r,q\right\} }:=\frac{1}{\flr{r\ln\left(q\right)}}\sum_{k=0}^{\flr{r\ln\left(q\right)}-1}\delta_{\Lambda a_{q}\left(\bar{v}\right)a\left(k\overrightarrow{v}\right)}$
. We have

\begin{align*}
\delta_{\Lambda}^{\left[\bar{w},r,q\right]} & =\frac{\flr{\ln\left(q\right)r}}{\ln\left(q\right)r}\cdot\left[\int_{0}^{1}\delta_{\Lambda a\left(x\overrightarrow{v}\right)}^{\left\{ \bar{w},r,q\right\} }\dx\right]+\frac{\ln\left(q\right)r-\flr{\ln\left(q\right)r}}{\ln\left(q\right)r}\cdot\left[\frac{1}{\ln\left(q\right)r-\flr{\ln\left(q\right)r}}\int_{\flr{\ln\left(q\right)r}}^{\ln\left(q\right)r}\delta_{\Lambda a_{q}\left(\bar{w}\right)a\left(x\overrightarrow{v}\right)}\dx\right].
\end{align*}
Applying \lemref{entropy_lower_bound} we get that

\begin{align}
\frac{1}{m}H_{\delta_{\Lambda}^{\left[\bar{w},r,q\right]}}\left(\pp^{m}\right) & \geq\frac{1}{m}\left(1-\frac{1}{\ln\left(q\right)r}\right)\int_{0}^{1}H_{\delta_{\Lambda a\left(x\overrightarrow{v}\right)}^{\left\{ \bar{w},r,q\right\} }}\left(\pp^{m}\right)\dx\label{eq:2142}\\
 & \geq\left(1-\frac{1}{\ln\left(q\right)r}\right)\left[\frac{1}{\flr{r\ln\left(q\right)}}\int_{0}^{1}H_{\delta_{\Lambda a_{q}\left(\bar{w}\right)a\left(x\overrightarrow{v}\right)}}\left(\pp^{\flr{r\ln\left(q\right)}}\right)\dx-\frac{m\ln\left|\pp\right|}{\flr{r\ln\left(q\right)}}\right].\nonumber 
\end{align}

Let $X'=X'_{N,\kappa,M}$ be the set defined in \lemref{bowen_control}
where $N=\flr{r\ln\left(q\right)}$. If $P\in\pp^{N}$ and $P\subseteq X'$,
then $P\subseteq\bigcup_{1}^{C_{1}^{\kappa N}}g_{i}B_{\eta,N}$ with
$g_{i}\in X'\subseteq X^{\leq M}$, and hence by \lemref{separation}
we have that $\left|\Lambda_{q}a\left(\bar{t}\right)\cap\Gamma g_{i}B_{\eta,N}\right|\leq C_{2}e^{\left(n-1\right)\left(\ln\left(q\right)-\flr{r\ln\left(q\right)}\right)}$
for any $\bar{t}\in\ln\left(q\right)\Delta$ , where $C_{1},C_{2}$
depend only on the dimension $n$. Set $C=\max\left\{ 1,C_{1},C_{2}e^{n-1}\right\} $,
and note that $C_{2}e^{\left(n-1\right)\left(\ln\left(q\right)-\flr{r\ln\left(q\right)}\right)}\leq Cq^{\left(n-1\right)\left(1-r\right)}$.

We now have that
\begin{align*}
H_{\delta_{\Lambda a\left(\bar{t}\right)}}\left(\pp^{N}\right) & =-\sum_{P\in\pp_{N}}\delta_{\Lambda a\left(\bar{t}\right)}\left(P\right)\ln\left(\delta_{\Lambda a\left(\bar{t}\right)}\left(P\right)\right)\geq-\sum_{P\in\pp_{N}\cap X'}\delta_{\Lambda a\left(\bar{t}\right)}\left(P\right)\ln\left(\delta_{\Lambda a\left(\bar{t}\right)}\left(P\right)\right)\\
 & \geq-\sum_{P\in\pp_{N}\cap X'}\delta_{\Lambda a\left(\bar{t}\right)}\left(P\right)\ln\left(C^{\kappa N+1}\frac{q^{\left(n-1\right)\left(1-r\right)}}{\left|\Lambda\right|}\right)=-\delta_{\Lambda a\left(\bar{t}\right)}\left(X'\right)\ln\left(C^{\kappa N+1}\frac{q^{\left(n-1\right)\left(1-r\right)}}{\left|\Lambda\right|}\right)\\
 & \geq-\left(\kappa N+1\right)\ln\left(C\right)+\delta_{\Lambda a\left(\bar{t}\right)}\left(X'\right)\ln\left(\frac{\left|\Lambda\right|}{q^{\left(n-1\right)\left(1-r\right)}}\right),
\end{align*}
and dividing by $N\leq r\ln\left(q\right)$ we obtain
\begin{equation}
\frac{1}{N}H_{\delta_{\Lambda a\left(\bar{t}\right)}}\left(\pp^{N}\right)\geq\delta_{\Lambda a\left(\bar{t}\right)}\left(X'\right)\left[\left(n-1\right)-\frac{1}{r}\left(\left(n-1\right)-\frac{\ln\left|\Lambda\right|}{\ln\left(q\right)}\right)\right]-\left(\kappa+\frac{1}{N}\right)\ln\left(C\right).\label{eq:2144}
\end{equation}

Note that $\left(n-1\right)\left(1-r\right)\leq\frac{\ln\left|\Lambda\right|}{\ln\left(q\right)}\leq n-1$
implies that $0\leq\left[\left(n-1\right)-\frac{1}{r}\left(\left(n-1\right)-\frac{\ln\left|\Lambda\right|}{\ln\left(q\right)}\right)\right]\leq n-1$. 

Recall from \lemref{bowen_control} that $\delta_{\Lambda a\left(\bar{t}\right)}\left(X'\right)\geq1-\delta_{\Lambda a\left(\bar{t}\right)}\left(X^{>M}\right)-\delta_{\Lambda a\left(\bar{t}\right)}^{N}\left(X^{>M}\right)\kappa^{-1}$.
Applying this inequality with $a\left(\bar{t}\right)=a_{q}\left(\bar{w}\right)a\left(x\overrightarrow{v}\right),\ 0\leq x\leq1$
and integrating over $x$ we obtain that
\begin{align*}
\int_{0}^{1}\delta_{\Lambda a_{q}\left(\bar{w}\right)a\left(x\overrightarrow{v}\right)}^{N}\left(X^{>M}\right)\dx & =\frac{1}{\flr{r\ln\left(q\right)}}\int_{0}^{\flr{r\ln\left(q\right)}}\delta_{\Lambda a_{q}\left(\bar{w}\right)a\left(x\overrightarrow{v}\right)}\left(X^{>M}\right)\dx\leq\delta_{\Lambda}^{\left[\bar{w},r,q\right]}\left(X^{>M}\right)+\frac{2}{r\ln\left(q\right)},
\end{align*}
and additionally we have that $\delta_{\Lambda a_{q}\left(\bar{w}\right)a\left(x\overrightarrow{v}\right)}\left(X^{>M}\right)<\delta_{\Lambda a_{q}\left(\bar{w}\right)}\left(X^{>M/e}\right)$,
so
\begin{equation}
\int_{0}^{1}\delta_{\Lambda a_{q}\left(\bar{w}\right)a\left(x\overrightarrow{v}\right)}\left(X'\right)\dx\geq1-\delta_{\Lambda a_{q}\left(\bar{w}\right)}\left(X^{>M/e}\right)-\delta_{\Lambda}^{\left[\bar{w},r,q\right]}\left(X^{>M}\right)\kappa^{-1}-\frac{2}{r\ln\left(q\right)}.\label{eq:2145}
\end{equation}
Combining \eqref{2142} \eqref{2144} and \eqref{2145} we obtain
that 
\[
\frac{1}{m}H_{\delta_{\Lambda}^{\left[\bar{w},r,q\right]}}\left(\pp^{m}\right)\geq\left[\left(n-1\right)-\frac{1}{r}\left(\left(n-1\right)-\frac{\ln\left|\Lambda\right|}{\ln\left(q\right)}\right)\right]-O\left(\delta_{\Lambda}^{\left[\bar{w},r,q\right]}\left(X^{>M}\right)\kappa^{-1}+\delta_{\Lambda a_{q}\left(\bar{w}\right)}\left(X^{>eM}\right)+\frac{m\ln\left|\pp\right|}{r\ln\left(q\right)}+\kappa\right).
\]
Setting $\kappa=\sqrt{\delta_{\Lambda}^{\left[\bar{w},r,q\right]}\left(X^{>M}\right)}$
if it is nonzero and otherwise $\kappa=\frac{1}{r\ln\left(q\right)}$
proves the lemma.
\end{proof}
Next we generalize the lemma above to averages over larger sets than
just for interval in $\Delta_{q}$ defined by $\left(\bar{w},r\right)$. 
\begin{defn}
For $s>0$ we set $\Delta_{s}=\left\{ \bar{t}\in\Delta\;\mid\;\left|t_{i}-t_{j}\right|\leq s\;\forall i,j\geq2\right\} $. 
\end{defn}
The set $\Delta_{s}$ should be thought of as a tube neighborhood
of the line corresponding to $\left(\bar{0},1\right)\in E\left(\Delta\right)$
(in \figref{example2} it is the line from the top vertex to the middle
of the triangle). Moreover, thinking of $\Delta_{s}$ as a union of
lines $\left(\bar{w},r\right)\in E\left(\Delta\right)$, we can present
$\mu^{\ln\left(q\right)\Delta_{s}}$ as an integral over $\mu^{\left[\bar{w},r,q\right]}$
for any probability measure $\mu$ on $X$. To give a lower bound
on these intervals lengths note that for $\bar{t}\in\Delta_{s}$ we
have that $\bar{t}+r\overrightarrow{v}\in\Delta_{s}$ if and only
if $\bar{t}+r\overrightarrow{v}\in\Delta$. Thus the conditions on
$r$ are that for every $i\ge2$ we have
\[
t_{i}+\frac{r}{n}\geq0\geq-\sum_{2}^{n}\left(t_{j}+\frac{r}{n}\right)=t_{1}+r\frac{\left(1-n\right)}{n}\geq\left(t_{i}+\frac{r}{n}\right)-1.
\]
Equivalently, $r$ is in the interval $\left[-{\displaystyle \min_{i\geq2}}nt_{i},1+t_{1}-{\displaystyle \max_{i\geq2}}t_{i}\right]$.
Shifting $\bar{t}$ to $\bar{t}-\left({\displaystyle \min_{i\geq2}n}t_{i}\right)\overrightarrow{v}$,
we may assume that ${\displaystyle \min_{i\geq2}}t_{i}=0$ and that
the interval length is $1+t_{1}-{\displaystyle \max_{i\geq2}}t_{i}$.
Since $\bar{t}\in\Delta_{s}$, we further get that $t_{i}\leq s$
for all $i\geq2$ and $t_{1}=-\sum_{2}^{n}t_{i}\geq\left(2-n\right)s$,
so the interval's length is at least $1+\left(1-n\right)s$. This
leads to the following definition.
\begin{defn}
Recall that $\lambda$ is a fixed Haar measure on $A\cong\RR_{0}^{n}$.
For $s>0$ define $\nu_{s}$ to be a probability measure on $E\left(\Delta\right)$
such that 
\[
\frac{1}{\lambda\left(\Delta_{s}\right)}\cdot\lambda\mid_{\Delta_{s}}=\int\left[\frac{1}{r}\int_{0}^{r}\delta_{\bar{w}+t\overrightarrow{v}}\dt\right]\dnu_{s}\left(\bar{w},r\right).
\]
More over we can choose such $\nu_{s}$ which satisfies $r\left(\nu_{s}\right):=inf\left\{ r\;\mid\;\left(\bar{w},r\right)\in supp\left(\nu_{s}\right)\right\} =\max\left\{ 0,1+\left(1-n\right)s\right\} $.
\end{defn}
We can now generalize \lemref{main_lemma}.
\begin{thm}
\label{thm:main_pre_theorem}Fix some $s>0$. For any choice of $\Lambda_{q}'\subseteq\Lambda_{q}$,
any weak star limit $\mu$ of $\delta_{\Lambda_{q}'}^{\ln\left(q\right)\Delta_{s}}$
is $A$-invariant. Suppose in addition that 
\begin{enumerate}
\item \label{enu:no_escape}The sequences $\delta_{\Lambda_{q}'}^{\ln\left(q\right)\Delta_{s}}=\int\delta_{\Lambda_{q}'}^{\left[\bar{w},r,q\right]}\dnu_{s}$
and $\int\delta_{\Lambda_{q}'a_{q}\left(\bar{w}\right)}\dnu_{s}$
do not exhibit escape of mass, and
\item \label{enu:length}we have $\ell={\displaystyle \liminf_{q\to\infty}}\frac{\ln\left|\Lambda_{q}'\right|}{\ln\left(q\right)}>0$.
\end{enumerate}
If $\ell=n-1$, then $h_{\mu}\left(T\right)=n-1$. If $\ell<n-1$,
but $1>\left(n-1\right)s$ then $h_{\mu}\left(T\right)\geq\left(n-1\right)\left(1-\frac{1-\ell/\left(n-1\right)}{1-\left(n-1\right)s}\right)$. 

\end{thm}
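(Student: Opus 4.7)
The statement contains two independent claims. The $A$-invariance of any weak-$*$ limit $\mu$ is a F\o lner-type argument: since $\Delta_s\subseteq\RR_0^n$ is bounded with nonempty interior for $s>0$, the rescaled sets $\ln(q)\Delta_s$ form a F\o lner sequence for translations in $A\cong\RR_0^n$. Hence for every $a_0\in A$ and $f\in C_c(X)$, the quantity $|\delta_{\Lambda_q'}^{\ln(q)\Delta_s}(a_0f)-\delta_{\Lambda_q'}^{\ln(q)\Delta_s}(f)|$ is bounded by $\norm{f}_\infty$ times the symmetric-difference ratio $\lambda(\ln(q)\Delta_s\triangle(\ln(q)\Delta_s+\log a_0))/\lambda(\ln(q)\Delta_s)$, which tends to $0$ as $q\to\infty$. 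Thus $\mu$ is $A$-invariant; note that this step does not use assumptions \enuref{no_escape} or \enuref{length}.

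The substantive part is the entropy bound. I may restrict to the nontrivial regime $\ell\geq(n-1)^2 s$, since otherwise the stated lower bound is $\leq 0$ and there is nothing to prove. Fix $M$ large and choose an $(M,\eta/4)$-partition $\pp$ as in \defref{partitions}, with $\eta$ small enough for \lemref{main_lemma} to apply and in addition such that $\pp^m$ has $\mu$-null boundaries for every $m$ (a standard small perturbation of the defining parameters). This ensures $H_{\delta_{\Lambda_q'}^{\ln(q)\Delta_s}}(\pp^m)\to H_\mu(\pp^m)$ along any weak-$*$ convergent subsequence. By the defining property of $\nu_s$,
\[
\delta_{\Lambda_q'}^{\ln(q)\Delta_s}\;=\;\int_{E(\Delta)}\delta_{\Lambda_q'}^{[\bar w,r,q]}\,d\nu_s(\bar w,r),
\]
so concavity of entropy (part (1) of \lemref{entropy_lower_bound}) yields
\[
\tfrac{1}{m}H_{\delta_{\Lambda_q'}^{\ln(q)\Delta_s}}(\pp^m)\;\geq\;\int \tfrac{1}{m}H_{\delta_{\Lambda_q'}^{[\bar w,r,q]}}(\pp^m)\,d\nu_s(\bar w,r).
\]

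For every $(\bar w,r)\in\supp(\nu_s)$ one has $r\geq r(\nu_s)=1-(n-1)s\geq 1-\tfrac{\ell}{n-1}$ by the regime assumption, so the hypothesis $q^{(n-1)(1-r)}\leq|\Lambda_q'|$ of \lemref{main_lemma} holds for all $q$ large, and the lemma gives
\[
\tfrac{1}{m}H_{\delta_{\Lambda_q'}^{[\bar w,r,q]}}(\pp^m)\;\geq\;(n-1)-\frac{(n-1)-\frac{\ln|\Lambda_q'|}{\ln q}}{r}-O(\mathrm{err}_q(\bar w,r)).
\]
Since $\ell<n-1$, the main term is increasing in $r$, hence its $\nu_s$-integral is bounded below by its value at $r=1-(n-1)s$. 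Passing to the limit $q\to\infty$ along the chosen subsequence, this value converges to $(n-1)-\tfrac{(n-1)-\ell}{1-(n-1)s}=(n-1)\bigl(1-\tfrac{1-\ell/(n-1)}{1-(n-1)s}\bigr)$; the error $O(m\ln|\pp|/(r\ln q))$ vanishes for fixed $m$; and the escape-of-mass errors $\sqrt{\delta_{\Lambda_q'}^{[\bar w,r,q]}(X^{>M})}$ and $\delta_{\Lambda_q'a_q(\bar w)}(X^{>M/e})$, after integrating against $\nu_s$ (using Cauchy--Schwarz on the square-root term), are dominated by the $X^{>M/e}$-tails of precisely the two measures appearing in assumption \enuref{no_escape}, hence are $o_M(1)$ uniformly in $q$. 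Letting $m\to\infty$ gives $h_\mu(T,\pp)\geq$~(bound)~$-\epsilon(M)$, and letting $M\to\infty$ (refining $\pp$ correspondingly) yields $h_\mu(T)\geq(n-1)\bigl(1-\tfrac{1-\ell/(n-1)}{1-(n-1)s}\bigr)$. In the borderline case $\ell=n-1$ this lower bound equals $n-1$, which combined with the universal upper bound $h_\mu(T)\leq h_{\mu_{Haar}}(T)=n-1$ and the uniqueness of the measure of maximal entropy forces $h_\mu(T)=n-1$.

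The principal obstacle will be the uniform control of the escape-of-mass error after integration against $\nu_s$: this is the structural reason why assumption \enuref{no_escape} imposes no escape for \emph{both} $\int\delta_{\Lambda_q'}^{[\bar w,r,q]}d\nu_s$ and $\int\delta_{\Lambda_q'a_q(\bar w)}d\nu_s$, matching exactly the two error terms appearing in \lemref{main_lemma}. A secondary technical issue is producing a structured $(M,\eta/4)$-partition in the sense of \defref{partitions} whose $T$-refinements all have $\mu$-null boundaries; this is handled by a standard perturbation of the height cutoff and of the defining balls.
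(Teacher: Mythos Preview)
Your approach is essentially the same as the paper's and is correct in the regime $1>(n-1)s$; the one difference is cosmetic: where the paper uses Markov's inequality to restrict $\nu_s$ to a ``good'' set $\Omega=\{(\bar w,r):\delta_{\Lambda_q'}^{[\bar w,r,q]}(X^{>M})\leq\sqrt{\varepsilon},\ \delta_{\Lambda_q'a_q(\bar w)}(X^{>M/e})\leq\sqrt{\varepsilon},\ r\geq r_\varepsilon\}$ with $\nu_s(\Omega)\geq 1-3\sqrt{\varepsilon}$ and then applies \lemref{main_lemma} only on $\Omega$, you integrate \lemref{main_lemma} directly over $\nu_s$ and use Jensen on the $\sqrt{\cdot}$ term. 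Both routes are fine.

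There is, however, a genuine gap in the case $\ell=n-1$ with $(n-1)s\geq 1$. Your ``nontrivial regime'' restriction $\ell\geq(n-1)^2s$ is justified only when the conclusion is a lower bound that could be $\leq 0$; when $\ell=n-1$ the theorem asserts $h_\mu(T)=n-1$ unconditionally in $s$, so you cannot simply discard the case $(n-1)s\geq 1$. In that case $r(\nu_s)=\max\{0,1-(n-1)s\}=0$, so the support of $\nu_s$ contains pairs $(\bar w,r)$ with $r$ arbitrarily small. For such $r$ the hypothesis $q^{(n-1)(1-r)}\leq|\Lambda_q'|$ of \lemref{main_lemma} fails (since $|\Lambda_q'|\leq\varphi(q)^{n-1}<q^{n-1}$), the term $\frac{m\ln|\pp|}{r\ln q}$ is not integrable, and your evaluation of the main term ``at $r=1-(n-1)s$'' is meaningless. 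This case is not academic: the paper applies the theorem with $\Delta_s=\Delta$ (take $s$ large), where $(n-1)s\geq 1$ for $n\geq 3$.

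The paper's remedy is exactly the $r_\varepsilon$-truncation you omitted: choose $r_\varepsilon>0$ and throw away the set $\{r<r_\varepsilon\}$, which has small $\nu_s$-mass, then apply \lemref{main_lemma} on the remaining set. After $q\to\infty$ the main term becomes $(n-1)-\frac{(n-1)-\ell}{r_\varepsilon}=(n-1)$ when $\ell=n-1$, independently of $r_\varepsilon$; then send $\varepsilon\to 0$. Incorporating this truncation into your argument closes the gap.
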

\begin{proof}
By the assumption on the lack of escape of mass, any weak star partial
limit of $\delta_{\Lambda_{q}'}^{\ln\left(q\right)\Delta_{s}}$ is
a probability measure, and by the structure of $\Delta_{s}$, it is
easily seen to be $A$-invariant. 

In order to apply \lemref{main_lemma}, we need to show that most
of $\delta_{\Lambda}^{\left[\bar{w},r,q\right]},\delta_{\Lambda a_{q}\left(\bar{w}\right)}$
for $\left(\bar{w},r\right)\in supp\left(\nu_{s}\right)$ do not exhibit
escape of mass, and have that $r\left(\nu_{s}\right)>0$. As this
is not true in general,  we restrict to a submeasure.

By condition (\enuref{no_escape}) we can choose $M$ big enough so
that $\delta_{\Lambda_{q}'}^{\ln\left(q\right)\Delta_{s}'}\left(X^{>M}\right),\int\delta_{\Lambda_{q}'a_{q}\left(\bar{w}\right)}\left(X^{>M/e}\right)<\varepsilon$
for all $q$ large enough. Moreover, we can choose $r_{\varepsilon}\geq r\left(\nu_{s}\right)$
with $r_{\varepsilon}>0$ so that 
\[
\Omega=\Omega\left(q,\varepsilon\right)=\left\{ \left(\bar{w},r\right)\mid\delta_{\Lambda_{q}'}^{\left[\bar{w},r,q\right]}\left(X^{>M}\right)\leq\sqrt{\varepsilon},\ \delta_{\Lambda_{q}'a_{q}\left(\bar{w}\right)}\left(X^{>M/e}\right)\leq\sqrt{\varepsilon},\;r\geq r_{\varepsilon}\right\} 
\]
satisfies $\nu_{s}\left(\Omega\right)\geq1-3\sqrt{\varepsilon}$.
We set $\nu_{s,\varepsilon}=\frac{1}{\nu\left(\Omega\right)}\nu_{s}\mid_{\Omega}$
and $\nu_{s,\varepsilon}'=\frac{1}{\nu\left(\Omega^{c}\right)}\nu_{s}\mid_{\Omega^{c}}$
so that $\nu_{s}=\nu_{s}\left(\Omega\right)\cdot\nu_{s,\varepsilon}+\left(1-\nu_{s}\left(\Omega\right)\right)\nu_{s,\varepsilon}'$. 

Choose an $\left(M,\eta\right)$ partition $\pp$ where $\mu\left(\partial P\right)=0$
for all $P\in\pp$ (there are such partitions for arbitrarily large
$M$ and any small enough $\eta>0$). For such a partition we have
that $\frac{1}{m}H_{\mu}\left(\pp^{m}\right)\geq\frac{1}{m}\limsup H_{\delta_{\Lambda_{q}'}^{\ln\left(q\right)\Delta_{s}}}\left(\pp^{m}\right)$
for any $m\in\NN$. Using \lemref{entropy_lower_bound} twice, once
from $\delta_{\Lambda_{q}'}^{\ln\left(q\right)\Delta_{s}}=\int\delta_{\Lambda_{q}'}^{\left[\bar{w},r,q\right]}\dnu_{s}$
to $\delta_{\Lambda_{q}'}^{\nu_{s,\varepsilon}}:=\int\delta_{\Lambda_{q}'}^{\left[\bar{w},r,q\right]}\dnu_{s,\varepsilon}$
and a second time to $\delta_{\Lambda_{q}'}^{\left[\bar{w},r,q\right]}$,
we get that
\[
H_{\delta_{\Lambda_{q}'}^{\ln\left(q\right)\Delta_{s}}}\left(\pp^{m}\right)\geq\nu\left(\Omega\right)H_{\delta_{\Lambda_{q}'}^{\nu_{s,\varepsilon}}}\left(\pp^{m}\right)\geq\left(1-3\sqrt{\varepsilon}\right)\int H_{\delta_{\Lambda_{q}'}^{\left(\bar{w},r,q\right)}}\left(\pp^{m}\right)\dnu_{s,\varepsilon}\left(\bar{w},r\right).
\]
Using \lemref{main_lemma} we conclude that 
\begin{align*}
\frac{1}{m}H_{\mu}\left(\pp^{m}\right) & \geq\left(1-3\sqrt{\varepsilon}\right)\limsup_{q\to\infty}\left[\left(n-1\right)-\frac{1}{r_{\varepsilon}}\left(\left(n-1\right)-\frac{\ln\left|\Lambda_{q}'\right|}{\ln\left(q\right)}\right)-O\left(2\sqrt{\varepsilon}+\frac{m\ln\left|\pp\right|}{r_{\varepsilon}\ln\left(q\right)}\right)\right]\\
 & \geq\left(1-3\sqrt{\varepsilon}\right)\left[\left(n-1\right)-\frac{\left(n-1\right)-\ell}{r_{\varepsilon}}-O\left(2\sqrt{\varepsilon}\right)\right].
\end{align*}
Assume first that $r\left(\nu_{s}\right)=1-\left(n-1\right)s>0$.
Then $\frac{1}{m}H_{\mu}\left(\pp^{m}\right)\geq\left(1-3\sqrt{\varepsilon}\right)\left[\left(n-1\right)-\frac{\left(n-1\right)-\ell}{1-ds}-O\left(2\sqrt{\varepsilon}\right)\right]\to\left(n-1\right)\left(1-\frac{1-\ell/\left(n-1\right)}{1-\left(n-1\right)s}\right)$
as $\varepsilon\to0$.

If $\ell=\left(n-1\right)$, then regardless of whether $r\left(\nu_{s}\right)>0$,
we have that $\frac{1}{m}H_{\mu}\left(\pp^{m}\right)\geq\left(1-3\sqrt{\varepsilon}\right)\left[\left(n-1\right)-O\left(2\sqrt{\varepsilon}\right)\right]\to\left(n-1\right)$
as $\varepsilon\to0$, thus completing the proof.
\end{proof}
There are two natural settings in which we can apply \thmref{main_pre_theorem}.
The first one is when $\Lambda_{q}'=\Lambda_{q}$ and then ${\displaystyle \liminf_{q\to\infty}}\frac{\ln\left|\Lambda_{q}'\right|}{\ln\left(q\right)}=n-1$
so we only need to worry about the escape of mass condition. We shall
deal with this setting in the next section. 

The second setting is when the $\delta_{\Lambda_{q}'}^{\ln\left(q\right)\Delta_{s}},\int\delta_{\Lambda_{q}'a_{q}\left(\bar{w}\right)}\dnu_{s}$
are supported in some fixed compact set $K$, hence condition \enuref{no_escape}
is automatically satisfied, and any partial weak limit $\mu$ is also
compactly supported probability measure. Thus, if $\Lambda_{q}'$
is big enough, then $\mu$ has positive entropy. The classification
of such measures for $n\geq3$ was done in \cite{einsiedler_invariant_2006}
by Einsiedler, Katok and Lindenstrauss.
\begin{thm}
\label{thm:positive_classification}(see \cite{einsiedler_invariant_2006})
Let $\mu$ be an $A$-invariant and ergodic probability measure on
$X$ with positive entropy with respect to some nontrivial element
in $A$. Then $\mu$ is an algebraic measure and is not compactly
supported.
\end{thm}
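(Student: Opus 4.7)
The plan is to outline the Einsiedler--Katok--Lindenstrauss proof, whose full details vastly exceed a short sketch; I will indicate the pivotal steps and isolate the main obstacle. The central object is the system of \emph{leaf-wise} (conditional) measures $\mu_x^{U^{ij}}$ associated to each root subgroup $U^{ij}=\set{I+tE_{ij}:t\in\bR}$ with $i\neq j$, which is a coarse Lyapunov subgroup for the $A$-action with Lyapunov functional $\alpha_{ij}(a(\bar t))=t_i-t_j$. By the Ledrappier--Young--Margulis--Tomanov entropy formula, the hypothesis $h_\mu(a)>0$ for some nontrivial $a\in A$ forces $\mu_x^{U^{ij}}$ to be non-atomic on a set of positive $\mu$-measure, for at least one pair $(i,j)$ with $\alpha_{ij}(a)>0$.

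The heart of the argument, and the main obstacle, is to upgrade this non-triviality of $\mu_x^{U^{ij}}$ into genuine $U^{ij}$-invariance of $\mu$. This is where the assumption $n\geq 3$ enters decisively. The codimension-one subtorus $A_{ij}\leq A$ annihilating $\alpha_{ij}$ commutes with $U^{ij}$; since $A$ has rank $n-1\geq 2$, this subtorus is non-trivial and acts with rich ergodic behaviour on $\mu$-typical fibers. Combining Lindenstrauss's \emph{low entropy} method with the \emph{high entropy} method of Einsiedler--Katok, one shows that a non-atomic leaf-wise measure along $U^{ij}$ whose transverse geometry is recurrent under a centralizing non-trivial semisimple element must coincide with Haar measure on a closed subgroup of $U^{ij}$, hence on all of the one-dimensional connected group $U^{ij}$. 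Thus $\mu$ is upgraded to $A\cdot U^{ij}$-invariance; this is precisely what fails for $n=2$, where the centralizer of $U^{ij}$ inside $A$ is trivial.

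Once $\mu$ admits a one-parameter unipotent invariance in addition to $A$-invariance, Ratner's measure classification theorem identifies $\mu$ as the normalized Haar measure on a closed orbit $xL$ of a connected closed subgroup $L$ containing $A$ and generated by $A$ together with unipotents normalized by $A$; such $L$ is automatically reductive with $L\supsetneq A$, which is the definition of an algebraic measure in this context. For the non-compactness clause, the reductive subgroups $L$ containing $A$ are exhausted by $L=G$ and by Levi components $L=\mathrm{S}(\GL_{n_1}\times\cdots\times\GL_{n_k})$ of standard parabolics with $k\geq 2$ blocks. In the first case $\mu=\mu_{Haar}$, which is not compactly supported by Mahler's criterion. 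In the second case, the center of $L$ lies inside $A$ and acts on $xL$ through characters that scale the blocks against one another, producing arbitrarily short vectors on distinct coordinate subspaces and thereby forcing $xL$ to escape every compact subset of $X_n$. Either way $\mu$ fails to be compactly supported, completing both conclusions of the theorem.
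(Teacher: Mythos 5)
The paper offers no proof of this statement: it is quoted verbatim from Einsiedler--Katok--Lindenstrauss \cite{einsiedler_invariant_2006} and used as a black box, so the only meaningful comparison is between your sketch and the EKL argument itself. On that score your outline is faithful and correctly organized: positive entropy forces a non-atomic leafwise measure along some root group $U^{ij}$ via the entropy formula; the rank assumption $n\geq3$ supplies a nontrivial subtorus of $A$ centralizing $U^{ij}$, which is exactly what the low-entropy method needs (and what is absent for $n=2$); combined with the high-entropy method this yields invariance of $\mu$ under a one-parameter unipotent group; and Ratner's theorem then gives algebraicity. Two soft spots are worth flagging, both handled in \cite{einsiedler_invariant_2006}. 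First, Ratner's theorem applies to the ergodic components of $\mu$ under the subgroup generated by the unipotents preserving it, not to $\mu$ directly; one must recombine these components using $A$-invariance and $A$-ergodicity to get a single closed orbit $xL$ with $L\supseteq A$. Second, in the non-compactness step the assertion that contracting one block of the Levi $L=\mathrm{S}(\GL_{n_1}\times\cdots\times\GL_{n_k})$ produces short vectors presupposes that the block decomposition of $\RR^n$ is rational for the lattice $x$, i.e.\ that the contracted subspace actually contains nonzero lattice vectors; this rationality is what one extracts from the fact that the stabilizer of $x$ in $L$ is a lattice in $L$ (the orbit is closed with finite $L$-invariant volume), and without it the scaling argument proves nothing. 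With those two points made explicit your sketch matches the cited proof.
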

 From this we can prove \thmref{bounded_orbits_uniform}.
\begin{proof}[Proof of \thmref{bounded_orbits_uniform}]
Fix a compact set $K\subseteq X_{n}$. For $s>0$ we have that $\Lambda_{q,K}=\left\{ x\in\Lambda_{q}\;\mid\;x\cdot\left(\ln\left(q\right)\Delta\right)\subseteq K\right\} \subseteq\Lambda_{q,K,s}:=\left\{ x\in\Lambda_{q}\;\mid\;x\cdot\left(\ln\left(q\right)\Delta_{s}\right)\subseteq K\right\} $.
By \thmref{main_pre_theorem} we get that if $\limfi i{\infty}\frac{\ln\left|\Lambda_{q_{i},K,s}\right|}{\ln\left(q_{i}\right)}=\ell>0$
for any subsequence, then any partial limit $\mu$ of $\delta_{\Lambda_{q_{i}}'}^{\ln\left(q_{i}\right)\Delta_{s}}$
is an $A$-invariant probability measure with $h_{\mu}\left(T\right)\geq\left(n-1\right)\left(1-\frac{1-\ell/\left(n-1\right)}{1-\left(n-1\right)s}\right)$.
In particular, choosing $s$ small enough we get a positive entropy,
hence $\mu$ is not bounded in contradiction to \thmref{positive_classification}.
We conclude that $\limfi q{\infty}\frac{\ln\left|\Lambda_{q_{i},K,s}\right|}{\ln\left(q\right)}=0$,
hence $\left|\Lambda_{q,K}\right|=O\left(q^{\varepsilon}\right)$
for any $\varepsilon>0$.
\end{proof}

\subsection{\label{subsec:No-escape}No escape of mass and ergodicity}

In this section we show that for any $\left(\bar{w},r\right)\in\Delta$,
the measures $\delta_{\Lambda_{q}}^{\left[\bar{w},r,q\right]},\delta_{\Lambda_{q}a_{q}\left(\bar{w}\right)}$
do not exhibit escape of mass (i.e. when averaging over all the elements
in $\Lambda_{q}$). Hence condition (\enuref{no_escape}) in \thmref{main_pre_theorem}
is satisfied. Since $\frac{\ln\left|\Lambda_{q}\right|}{\ln\left(q\right)}=\left(n-1\right)\frac{\ln\left(\varphi\left(q\right)\right)}{\ln\left(q\right)}\to\left(n-1\right)$,
we obtain that any partial weak limit of $\delta_{\Lambda_{q}}^{\Delta_{s}}$
has maximal entropy and therefore must be the Haar measure. As we
shall see, the argument for the lack of escape of mass boils down
to equidistribution of $\left(\nicefrac{\ZZ}{q\ZZ}\right)^{\times}$
in $\nicefrac{\ZZ}{q\ZZ}$, and we begin with this result. The results
in this section were proved in \cite{david_equidistribution_2017}
for $n=2$, and the proofs here are straight forward generalizations
to higher dimension.
\begin{defn}
For $N\in\NN$ we set $\omega\left(N\right)$ to be the number of
distinct prime factors of $N$.
\end{defn}
\begin{lem}
Let $N\in\NN$ and $0\leq\alpha\leq1$. Then $\left|\left|\left\{ 1\leq m\leq\alpha N\;:\;\left(m,N\right)=1\right\} \right|-\alpha\varphi\left(N\right)\right|\leq2^{\omega\left(N\right)}$.
\end{lem}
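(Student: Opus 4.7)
The plan is to apply the standard Möbius inclusion–exclusion over the prime divisors of $N$. Writing $P$ for the set of prime divisors, the indicator that $(m,N)=1$ is $\sum_{d \mid \gcd(m,N)} \mu(d)$, and summing over $1 \leq m \leq \alpha N$ gives
\[
\left|\{1\leq m\leq\alpha N : (m,N)=1\}\right|=\sum_{d\mid N}\mu(d)\left\lfloor\frac{\alpha N}{d}\right\rfloor.
\]
Only squarefree $d$ contribute, and there are exactly $2^{\omega(N)}$ squarefree divisors of $N$.

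Next I would compare this with the ``idealized'' sum obtained by dropping the floor functions. Using $\varphi(N) = N\sum_{d\mid N}\mu(d)/d$, I have
\[
\sum_{d\mid N}\mu(d)\,\frac{\alpha N}{d}=\alpha N\sum_{d\mid N}\frac{\mu(d)}{d}=\alpha\varphi(N).
\]
Therefore the difference
\[
\left|\{1\leq m\leq\alpha N:(m,N)=1\}\right|-\alpha\varphi(N)=\sum_{d\mid N,\,\mu(d)\neq 0}\mu(d)\left(\left\lfloor\frac{\alpha N}{d}\right\rfloor-\frac{\alpha N}{d}\right).
\]

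Finally I would bound in absolute value: each summand has $|\mu(d)|\leq 1$ and $\bigl|\lfloor \alpha N/d\rfloor-\alpha N/d\bigr|<1$, so the whole sum is bounded by the number of squarefree divisors of $N$, which is $2^{\omega(N)}$. There is no real obstacle here; the only subtle point worth noting is that one must restrict the sum to squarefree $d$ before bounding termwise, since otherwise the number of divisors $\tau(N)$ (which is generally larger than $2^{\omega(N)}$) would appear in the bound.
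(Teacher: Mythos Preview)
Your proof is correct and follows essentially the same route as the paper: both use M\"obius inclusion--exclusion to write the count as $\sum_{d\mid N}\mu(d)\lfloor \alpha N/d\rfloor$, compare with the floor-free sum $\alpha N\sum_{d\mid N}\mu(d)/d=\alpha\varphi(N)$, and bound the discrepancy termwise by noting that only the $2^{\omega(N)}$ squarefree divisors contribute.
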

\begin{proof}
For $P\in\NN$ set $U_{P}=P\ZZ\cap\left[1,...,\alpha N\right]$, so
that $\left|U_{p}\right|=\flr{\frac{\alpha N}{P}}$. We want to find
$U_{1}\backslash\bigcup_{p}U_{p}$ where $p$ runs over the prime
divisors of $N$. Letting $\mu\left(P\right)$ be the M{\"o}bius
function we obtain that
\[
\left|U_{1}\backslash\bigcup_{p}U_{p}\right|=\sum_{P\mid N}\mu\left(P\right)\left|U_{P}\right|=\alpha N\sum_{P\mid N}\frac{\mu\left(P\right)}{P}+\sum_{P\mid N}\mu\left(P\right)\left[\left|U_{P}\right|-\frac{\alpha N}{P}\right].
\]
The lemma is proved by noting that $N\sum_{P\mid N}\frac{\mu\left(P\right)}{P}=\varphi\left(N\right)$
and that 
\[
\left|\sum_{P\mid N}\mu\left(P\right)\left[\left|U_{P}\right|-\alpha\frac{N}{P}\right]\right|\leq\sum_{P\mid N}\left|\mu\left(P\right)\right|\leq2^{\omega\left(N\right)}.
\]

\end{proof}
The next lemma shows that sequence of the form $\delta_{\Lambda_{q}a\left(\bar{t}\right)}$
do not exhibit escape of mass under suitable conditions on $\bar{t}$.
\begin{lem}[No escape of mass]
\label{lem:mass_upper_bound} Fix some $q\in\NN,\;M>1$ and $\bar{t}\in\ln\left(q\right)\Delta$
where $\ln\left(q\right)\geq2\omega\left(q\right)+{\displaystyle \max_{i\geq2}}t_{i}-t_{1}$.
Then
\[
\left|\left\{ \bar{p}\in\Lambda_{q}\;\mid\;\Gamma u_{\bar{p}/q}a\left(\bar{t}\right)\in X^{\geq M}\right\} \right|\leq\frac{\left(4\varphi\left(q\right)\right)^{n-1}}{M^{n}}=\frac{4^{n-1}}{M^{n}}\left|\Lambda_{q}\right|,
\]
or equivalently $\left(\delta_{\Lambda_{q}a\left(\bar{t}\right)}\right)\left(X^{\geq M}\right)\leq\frac{4^{n-1}}{M^{n}}$.
\end{lem}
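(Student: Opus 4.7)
The plan is to translate the condition $\Gamma u_{\bar{p}/q}a(\bar{t})\in X^{\geq M}$ into the existence of a nonzero $\bar{k}=(k_{1},\ldots,k_{n})\in\ZZ^{n}$ for which $\bar{k}u_{\bar{p}/q}a(\bar{t})$ has $\ell^{\infty}$-norm strictly less than $1/M$. Expanding coordinate-wise this becomes $|k_{1}|<e^{-t_{1}}/M$ together with $|k_{1}p_{j-1}+k_{j}q|<L_{j}:=qe^{-t_{j}}/M$ for $j\geq 2$. Since $t_{j}\geq 0$ for $j\geq 2$ and $M>1$, the choice $k_{1}=0$ would force $\bar{k}=0$; the symmetry $\bar{k}\mapsto-\bar{k}$ then lets me restrict attention to $k_{1}>0$, yielding at most $e^{-t_{1}}/M$ choices. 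Since the conditions on $p_{1},\ldots,p_{n-1}$ decouple for fixed $k_{1}$, the number of bad $\bar{p}$ contributed by this $k_{1}$ equals $\prod_{j=2}^{n}N_{k_{1},j}$, where
\[
N_{k_{1},j}=\bigl|\{p\in(\ZZ/q\ZZ)^{\times}:\|k_{1}p\|_{q}<L_{j}\}\bigr|.
\]

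To estimate $N_{k_{1},j}$ I would set $d=\gcd(k_{1},q)$ and exploit the fact that the reduction map $(\ZZ/q\ZZ)^{\times}\twoheadrightarrow(\ZZ/(q/d)\ZZ)^{\times}$ is a surjective group homomorphism with fibers of size $\varphi(q)/\varphi(q/d)$, while multiplication by $k_{1}/d$ permutes $(\ZZ/(q/d)\ZZ)^{\times}$. Via the identity $\|k_{1}p\|_{q}=d\cdot\|(k_{1}/d)p\|_{q/d}$ this reduces the count to integers coprime to $q/d$ in an interval of length $2L_{j}/d$ inside $\ZZ/(q/d)\ZZ$. Applying the preceding counting lemma together with the crude estimates $\varphi(q)/\varphi(q/d)\leq d$ and $\omega(q/d)\leq\omega(q)$ then gives
\[
N_{k_{1},j}\leq A_{j}+E_{k_{1}},\qquad A_{j}:=\frac{2\varphi(q)}{Me^{t_{j}}},\quad E_{k_{1}}:=2d\cdot 2^{\omega(q)}.
\]

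The crux of the argument, and the step where the specific hypothesis on $q$ is used, is absorbing the error term into the main term, i.e.\ showing $E_{k_{1}}\leq A_{j}$. Substituting $d\leq|k_{1}|<e^{-t_{1}}/M$ yields $E_{k_{1}}/A_{j}\leq e^{t_{j}-t_{1}}\cdot 2^{\omega(q)}/\varphi(q)$, and the assumption $\max_{j\geq 2}t_{j}-t_{1}\leq\ln q-2\omega(q)$ bounds this by $(q/\varphi(q))(2/e^{2})^{\omega(q)}$. Combined with the elementary estimate $q/\varphi(q)=\prod_{p\mid q}p/(p-1)\leq 2^{\omega(q)}$ (each factor is at most $2$), the ratio becomes at most $(4/e^{2})^{\omega(q)}<1$, so $N_{k_{1},j}\leq 2A_{j}$. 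I expect this to be the main obstacle, since the constant $2\omega(q)$ in the hypothesis is calibrated precisely so that $4/e^{2}<1$ suffices.

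The remainder is routine bookkeeping. Using $\sum_{j\geq 2}t_{j}=-t_{1}$ one computes
\[
\prod_{j=2}^{n}2A_{j}=2^{n-1}\prod_{j=2}^{n}\frac{2\varphi(q)}{Me^{t_{j}}}=\frac{4^{n-1}\varphi(q)^{n-1}e^{t_{1}}}{M^{n-1}},
\]
and summing over the at most $e^{-t_{1}}/M$ positive integer values of $k_{1}$ cancels the $e^{t_{1}}$ factor and produces the claimed bound $(4\varphi(q))^{n-1}/M^{n}$.
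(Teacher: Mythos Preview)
Your argument is correct and follows essentially the same route as the paper: translate the height condition into a short-vector inequality, rule out $k_{1}=0$, fix $k_{1}$ and reduce modulo $q/\gcd(k_{1},q)$, apply the counting lemma to bound each $N_{k_{1},j}$, and then use the hypothesis $\max_{j\ge2}t_{j}-t_{1}\le\ln q-2\omega(q)$ to absorb the $2^{\omega}$ error into the main term before summing over $k_{1}$. The only cosmetic differences are that the paper keeps the fiber size as $\varphi(q)/\varphi(\tilde q)$ (rather than bounding it by $d$) and controls the error via $\varphi(N)\ge N/e^{\omega(N)}$ instead of your $q/\varphi(q)\le 2^{\omega(q)}$, but these lead to the same numerical outcome.
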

\begin{proof}
We say that $\bar{p}\in\Lambda_{q}$ is bad if $\Gamma u_{\bar{p}/q}a\left(\bar{t}\right)\in X^{\geq M}$,
i.e. there exists $\bar{0}\neq\left(m,\bar{k}\right)\in\ZZ^{n}$ such
that $\norm{\left(m,\bar{k}\right)u_{\bar{p}/q}a\left(\bar{t}\right)}_{\infty}\leq\frac{1}{M}$
or equivalently $\left|m\right|\leq\frac{1}{Me^{t_{1}}}$ and $\left|mp_{i}+qk_{i}\right|\leq\frac{q}{Me^{t_{i}}}$
for $i=2,...,n$. Clearly, we may assume that $m\geq0$ and actually
even $m\geq1$, since otherwise for each $i\geq2$ we have $1>\frac{1}{M}\geq\left|\left(m\frac{p_{i}}{q}+k_{i}\right)e^{t_{i}}\right|=\left|k_{i}\right|e^{t_{i}}\geq\left|k_{i}\right|$
and therefore $k_{i}=0$ (because $t_{i}\geq0$ for $i\geq2$) - contradiction.

We say that $\bar{p}$ is $m$-bad if there exist some $\bar{n}$
such that $\left|mp_{i}+qk_{i}\right|\leq\frac{q}{Me^{t_{i}}}$ for
$i=2,...,n$.

Fixing $1\leq m\leq\frac{1}{Me^{t_{1}}}$, we count the number of
$m$-bad $\bar{p}$ for each such $m$. Letting $d_{m}=gcd\left(q,m\right)$
and writing $q=\tilde{q}d_{m},\;m=\tilde{m}d_{m}$, we get that 
\begin{equation}
\left|\tilde{q}k_{i}+\tilde{m}p_{i}\right|\leq\frac{\tilde{q}}{e^{t_{i}}M}.\label{eq:2313}
\end{equation}
We consider \eqref{2313} as a counting problem over $\nicefrac{\ZZ}{\tilde{q}\ZZ}$.
Note first that $\bar{t}\in\ln\left(q\right)\Delta$  implies that
$t_{1}\geq-\ln\left(q\right)\frac{n-1}{n}$, so that $d_{m}\leq m<q^{\frac{n-1}{n}}$
and therefore $\tilde{q}>q^{\frac{1}{n}}>1$. Since $\left(q,p\right)=1$
we also have that $\left(\tilde{q},p\right)=1$ and hence $\left(\tilde{q},\tilde{m}p\right)=1$.
Consider the map $\pi:\left(\nicefrac{\ZZ}{q\ZZ}\right)^{\times}\to\left(\nicefrac{\ZZ}{\tilde{q}\ZZ}\right)^{\times}$
and let $A_{i}=\left\{ \left[a\right]\in\left(\nicefrac{\ZZ}{\tilde{q}\ZZ}\right)^{\times}\;\mid\;\left|a\right|\leq\frac{\tilde{q}}{e^{t_{i}}M}\right\} $.
For this fixed $m$, the $m$-bad $\bar{p}$'s are exactly $\prod_{1}^{k}\pi^{-1}\left(\tilde{m}^{-1}A_{i}\right)$,
hence there are at most $\prod_{2}^{n}\left(\left|A_{i}\right|\cdot\left|\ker\left(\pi\right)\right|\right)$
such $\bar{p}$. Since $\pi$ is surjective we obtain that $\left|\ker\left(\pi\right)\right|=\frac{\varphi\left(q\right)}{\varphi\left(\tilde{q}\right)}$
and by the previous lemma we get that $\left|A_{i}\right|\leq2\left(\frac{1}{e^{t_{i}}M}\varphi\left(\tilde{q}\right)+2^{\omega\left(\tilde{q}\right)}\right)$.

We claim that $2^{\omega\left(\tilde{q}\right)}\leq\frac{1}{e^{t_{i}}M}\varphi\left(\tilde{q}\right)$.
Assuming this claim, the total number of $m$-bad $p$'s is at most
$\prod_{2}^{n}\left|A_{i}\right|\cdot\left|\ker\left(\pi\right)\right|\leq\prod_{2}^{n}\left(\frac{4}{e^{t_{i}}M}\varphi\left(q\right)\right)=e^{t_{1}}\left(\frac{4\varphi\left(q\right)}{M}\right)^{n-1}$.
Since there are $\flr{\frac{1}{Me^{t_{1}}}}$ such $m$, a union bound
shows that the number of bad $p$ is at most $\frac{\left(4\varphi\left(q\right)\right)^{n-1}}{M^{n}}$.
Thus, to complete the proof we need only to show that $2^{\omega\left(\tilde{q}\right)}\leq\frac{1}{e^{t_{i}}M}\varphi\left(\tilde{q}\right)$
for $i\geq2$ and $\bar{t}\in\ln\left(q\right)\Delta$ such that $\ln\left(q\right)\geq2\omega\left(q\right)+{\displaystyle \max_{i\geq2}}t_{i}-t_{1}$.

For any integer with prime decomposition $N=\prod r_{i}^{e_{i}}$
we have that
\[
\varphi\left(N\right)=\prod r_{i}^{e_{i}}\left(1-\frac{1}{r_{i}}\right)\geq\frac{1}{e^{\omega\left(N\right)}}\prod r_{i}^{e_{i}}=\frac{N}{e^{\omega\left(N\right)}}.
\]
The claim now follows from
\begin{eqnarray*}
\frac{2^{\omega\left(\tilde{q}\right)}}{\varphi\left(\tilde{q}\right)} & \leq & \frac{e^{\omega\left(\tilde{q}\right)}}{e^{-\omega\left(\tilde{q}\right)}\frac{q}{d_{m}}}=d_{m}e^{2\omega\left(\tilde{q}\right)-\ln\left(q\right)}\leq\frac{1}{Me^{t_{1}}}\cdot e^{2\omega\left(q\right)-\ln\left(q\right)}\leq\frac{1}{Me^{t_{i}}}.
\end{eqnarray*}
\end{proof}
The condition $\ln\left(q\right)\geq2\omega\left(q\right)+{\displaystyle \max_{i\geq2}}t_{i}-t_{1}$
in the lemma means that $\bar{t}$ is a bit far from the faces of
$\Delta$ which do not contain $\bar{0}$ (these are the dashed lines
in \figref{example2}). Since $\omega\left(q\right)=o\left(\ln\left(q\right)\right)$,
this is a very weak condition which we use in the next lemma.
\begin{lem}
\label{lem:no_escape}For any $\left(\bar{w},r\right)\in E\left(\Delta\right)$
and $q\in\NN$ large enough so that $\frac{1}{r}\frac{2\omega\left(q\right)}{\ln\left(q\right)}\leq\min\left\{ 1,\frac{4^{n-1}}{M^{n}}\right\} $
we have that $\delta_{\Lambda_{q}}^{\left[\bar{w},r,q\right]}\left(X^{\geq M}\right),\delta_{\Lambda_{q}a_{q}\left(\bar{w}\right)}\left(X^{\geq M}\right)\leq\left(\frac{4}{M}\right)^{n}$.
\end{lem}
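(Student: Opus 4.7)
The plan is to reduce both estimates to direct applications of \lemref{mass_upper_bound}, viewing the statement as a careful bookkeeping corollary that uses the two parts of the hypothesis on $\tfrac{2\omega(q)}{r\ln q}$ to absorb the ``boundary strip'' of $\Delta$ where \lemref{mass_upper_bound} breaks down. The key observation is the following elementary computation: for $\bar s := \bar w + t\overrightarrow{\mathrm v}$ with $\overrightarrow{\mathrm v} = \tfrac{1}{n}(1-n,1,\dots,1)$, the contributions of $\overrightarrow{\mathrm v}$ to $\max_{i\ge 2} s_i - s_1$ telescope nicely to give
$$\max_{i\ge 2} s_i - s_1 \;=\; \max_{i\ge 2} w_i - w_1 + t.$$
Consequently, the hypothesis of \lemref{mass_upper_bound} applied to $\ln(q)\bar s \in \ln(q)\Delta$ (namely $\ln(q) \ge 2\omega(q) + \ln(q)(\max_{i\ge 2} s_i - s_1)$) is equivalent to the upper bound $t \le 1 - \tfrac{2\omega(q)}{\ln q} - (\max_{i\ge 2} w_i - w_1)$. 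Since $(\bar w, r) \in E(\Delta)$ forces $\bar w + r\overrightarrow{\mathrm v}\in\Delta$, i.e.\ $\max_{i\ge 2} w_i - w_1 \le 1 - r$, this condition is satisfied at least for every $t \in \bigl[0,\, r - \tfrac{2\omega(q)}{\ln q}\bigr]$.

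For the bound on $\delta_{\Lambda_q a_q(\bar w)}(X^{\ge M})$ we apply the above with $t=0$: the hypothesis $\tfrac{2\omega(q)}{r\ln q}\le 1$ gives $r \ge \tfrac{2\omega(q)}{\ln q}$, so \lemref{mass_upper_bound} applies at $\bar t = \ln(q)\bar w$ and yields $\delta_{\Lambda_q a_q(\bar w)}(X^{\ge M}) \le \tfrac{4^{n-1}}{M^n} \le (4/M)^n$. For the second bound, rewrite the measure as an integral average
$$\delta_{\Lambda_q}^{[\bar w,r,q]}(X^{\ge M}) \;=\; \frac{1}{r}\int_0^r \delta_{\Lambda_q a_q(\bar w + t\overrightarrow{\mathrm v})}(X^{\ge M})\,dt,$$
and split $[0,r]$ into the ``good'' subinterval $[0,\,r - \tfrac{2\omega(q)}{\ln q}]$, on which the integrand is $\le \tfrac{4^{n-1}}{M^n}$ by \lemref{mass_upper_bound}, and the complementary strip of length $\tfrac{2\omega(q)}{\ln q}$, on which we use the trivial bound $\le 1$. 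Combining gives
$$\delta_{\Lambda_q}^{[\bar w,r,q]}(X^{\ge M}) \;\le\; \frac{4^{n-1}}{M^n} + \frac{1}{r}\cdot\frac{2\omega(q)}{\ln q} \;\le\; \frac{2\cdot 4^{n-1}}{M^n} \;\le\; \left(\frac{4}{M}\right)^n,$$
where the middle inequality invokes the second part of the hypothesis.

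There is no substantive obstacle here: the two parts of the hypothesis $\tfrac{2\omega(q)}{r\ln q}\le\min\{1,\tfrac{4^{n-1}}{M^n}\}$ were designed precisely to make the above split work, the first part ensuring that the good interval is nonempty and includes $t=0$, the second that the contribution of the bad strip is comparable to the main term. The content of the lemma lies entirely in \lemref{mass_upper_bound}; what remains is the above elementary verification that the line segment $\{\bar w + t\overrightarrow{\mathrm v} : t\in[0,r]\}$ stays safely inside the region of $\Delta$ where that lemma is applicable, up to an $o(1)$ strip as $q\to\infty$.
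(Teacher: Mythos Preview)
Your proposal is correct and follows essentially the same approach as the paper's proof: both reduce to \lemref{mass_upper_bound} via the identity $\max_{i\ge 2}s_i - s_1 = \max_{i\ge 2}w_i - w_1 + t$ for $\bar s = \bar w + t\overrightarrow{\mathrm v}$, use $(\bar w,r)\in E(\Delta)$ to deduce that the hypothesis of that lemma holds on $[0,\,r-\tfrac{2\omega(q)}{\ln q}]$, and then split the integral into this good interval plus a short bad strip controlled by the second part of the hypothesis. The only cosmetic difference is that the paper phrases the telescoping relative to the endpoint $\bar w^{(r)}$ rather than $\bar w^{(0)}$, but the resulting estimate is identical.
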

\begin{proof}
For $s\in\RR$ we denote $\bar{w}^{\left(s\right)}=\bar{w}+s\overrightarrow{v}$
and note that for $r,s\in\RR$ we have that 
\[
\max_{i\geq2}w_{i}^{\left(s\right)}-w_{1}^{\left(s\right)}=\left(s-r\right)+\max_{i\geq2}w_{i}^{\left(r\right)}-w_{1}^{\left(r\right)}.
\]
The assumption that $\bar{w}^{\left(r\right)}\in\Delta$ implies that
${\displaystyle \max_{i\geq2}w_{i}^{\left(r\right)}-w_{1}^{\left(r\right)}}\leq1$.
Applying \lemref{mass_upper_bound} we conclude that if $r-s\geq\frac{2\omega\left(q\right)}{\ln\left(q\right)}$,
then $\delta_{\Lambda_{q}a_{q}\left(\bar{w}^{\left(s\right)}\right)}\left(X^{\geq M}\right)\leq\frac{4^{n-1}}{M^{n}}$.
By assumption, this is true for $s=0$ and therefore $\delta_{\Lambda_{q}a_{q}\left(\bar{w}\right)}\left(X^{\geq M}\right)\leq\frac{4^{n-1}}{M^{n}}$.
The proof is completed by noting that
\begin{align*}
\delta_{\Lambda_{q}}^{\left[\bar{w},r,q\right]}\left(X^{\geq M}\right) & =\frac{1}{r}\int_{0}^{r}\delta_{\Lambda_{q}a_{q}\left(\bar{w}^{\left(s\right)}\right)}\left(X^{\geq M}\right)\ds\leq\frac{1}{r}\left(r\frac{4^{n-1}}{M^{n}}+\frac{2\omega\left(q\right)}{\ln\left(q\right)}\right)\leq\left(\frac{4}{M}\right)^{n}.
\end{align*}
\end{proof}

We can now use this lack of escape of mass and \thmref{main_pre_theorem}
in order to prove \thmref{main_theorem}.
\begin{proof}[Proof of \thmref{main_theorem}]
 We first prove $(3)$.

By \lemref{no_escape} and \thmref{main_pre_theorem} we get that
$\delta_{\Lambda_{q}}^{\ln\left(q\right)\Delta}\wstar\mu_{Haar}$,
hence part (3) follows by applying \corref{fund_domain}.

$(3)\Rightarrow(2)$: Suppose that $\left|\Lambda_{q}'\right|\geq\alpha\left|\Lambda_{q}\right|$
for some $\alpha>0$ and write 
\[
\delta_{\Lambda_{q}}^{\ln\left(q\right)\Delta_{full}}=\frac{\left|\Lambda_{q}'\right|}{\left|\Lambda_{q}\right|}\delta_{\Lambda_{q}'}^{\ln\left(q\right)\Delta_{full}}+\frac{\left|\Lambda_{q}\backslash\Lambda_{q}'\right|}{\left|\Lambda_{q}\right|}\delta_{\Lambda_{q}\backslash\Lambda_{q}'}^{\ln\left(q\right)\Delta_{full}}.
\]
Suppose that $\mu_{1}$ is a partial weak limit of $\delta_{\Lambda_{q}'}^{\ln\left(q\right)\Delta_{full}}$,
and note in particular that it is $T$-invariant. By restricting to
a subsequence we may assume that all the terms above converge and
by taking the limit and applying part (3) we obtain that $\mu_{Haar}=\alpha'\mu_{1}+\left(1-\alpha'\right)\mu_{2}$
where $\alpha'\geq\alpha>0$. If $\alpha'=1$, then we are done. Otherwise
we must have that $\mu_{2}$ is also $T$-invariant and both $\mu_{1},\mu_{2}$
are probability measures. Since $\mu_{Haar}$ is $T$-ergodic, it
is an extreme point in the set of $T$-invariant probability measures,
implying that $\mu_{1}=\mu_{2}=\mu_{Haar}$. It follows that $\delta_{\Lambda_{q}'}^{\ln\left(q\right)\Delta_{full}}$
converges to the Haar measure as well.

$(2)\Rightarrow(1)$: Let $\left\{ f_{i}\right\} _{1}^{\infty}$ be
a dense sequence of compactly supported continuous function in $C_{c}\left(X\right)$.
For each $m,q\in\NN$ define
\[
\Lambda_{q,m}=\left\{ x\in\Lambda_{q}\;\mid\;\max_{1\leq i\leq m}\left|\left(\delta_{x}^{\ln\left(q\right)\Delta_{full}}-\mu_{Haar}\right)\left(f_{i}\right)\right|<\frac{1}{m}\right\} .
\]
We claim that $\limfi q{\infty}\frac{\left|\Lambda_{q,m}\right|}{\left|\Lambda_{q}\right|}=1$
for any fixed $n$. Otherwise, find some $1\leq i\leq m$, $\epsilon\in\left\{ \pm1\right\} $
and $\alpha>0$ such that 
\[
V_{q}:=\left\{ x\in\Lambda_{q}\;\mid\;\eps\left(\delta_{x}^{\ln\left(q\right)\Delta_{full}}-\mu_{Haar}\right)(f_{i})\geq\frac{1}{m}\right\} 
\]
satisfy $\frac{\left|V_{q_{j}}\right|}{\left|\Lambda_{q}\right|}\geq\alpha$
for some subsequence $q_{j}$. By part $(2)$ we get that $\delta_{V_{q}}^{\ln\left(q\right)\Delta_{full}}\to\mu_{Haar}$
while $\delta_{V_{q}}^{\ln\left(q\right)\Delta_{full}}\left(f_{i}\right)\not\to\mu_{Haar}\left(f_{i}\right)$
- contradiction. Define $m\left(q\right)=\max\left\{ m\;\mid\;\frac{\left|\Lambda_{q,m}\right|}{\left|\Lambda_{q}\right|}\geq1-\frac{1}{m}\right\} $
and set $\Lambda_{q}'=\Lambda_{q,m\left(q\right)}$. By the claim
that we just proved we get that $m\left(q\right)\overset{q\to\infty}{\longrightarrow}\infty$,
and therefore $\frac{\left|\Lambda_{q}'\right|}{\left|\Lambda_{q}\right|}\geq1-\frac{1}{m\left(q\right)}\to1$.
Furthermore, for each $i\leq m$ we have that $\left|\left(\delta_{x}^{\ln\left(q\right)\Delta_{full}}-\mu_{Haar}\right)\left(f_{i}\right)\right|<\frac{1}{m}$
whenever $x\in\Lambda_{q}'$ and $m\left(q\right)\geq m$. We conclude
that $\delta_{x^{\left(q\right)}}^{\ln\left(q\right)\Delta_{full}}\left(f_{i}\right)\to\mu_{Haar}\left(f_{i}\right)$
for any choice of $x^{\left(q\right)}\in\Lambda_{q}'$, and since
$\left\{ f_{i}\right\} _{1}^{\infty}$ is dense we have that $\delta_{x^{\left(q\right)}}^{\ln\left(q\right)\Delta_{full}}\wstar\mu_{Haar}$
and we are done.
\end{proof}

\appendix

\section{\label{app:bowen}Proof of \lemref{bowen_control}}

Before proving \lemref{bowen_control}, we give a few result about
small balls in $X_{n}$ which we need in the proof. Recall (from \defref{small_balls})
that for $\eta>0,N\geq0$ we set 
\begin{align*}
V_{\eta,N} & =\left\{ \sum_{i,j=1}^{n}a_{i,j}E_{i,j}\;\mid\;\left|a_{i,j}\right|\leq\begin{cases}
\eta e^{-N} & i=1,\;j\geq2\\
\eta & i\geq2\;or\;i=j=1
\end{cases}\quad\quad\right\} ,\\
B_{\eta,N} & =\left(I+V_{\eta,N}\right)\cap\SL_{n}\left(\RR\right),
\end{align*}
and denote $V_{\eta}=V_{\eta,0}$ and $B_{\eta}=B_{\eta,0}$.

It is easy to check that $V_{\eta,N}\cdot V_{\eta',N}\subseteq V_{\eta\eta'n,N}$
and $V_{\eta,N}+V_{\eta',N}\subseteq V_{\eta+\eta',N}$. In particular
we get the following two simple results.
\begin{lem}
\label{lem:inverse_ball}Let $I+W\in B_{\eta,N}$ where $\eta<\frac{1}{2n}$.
Then $\left(I+W\right)^{-1}\in I-W+V_{\eta^{2}n,N}$ and in particular
$\left(B_{\eta,N}\right)^{-1}\subseteq B_{\frac{3}{2}\eta,N}$.
\end{lem}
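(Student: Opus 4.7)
The plan is to expand $(I+W)^{-1}$ as a Neumann series and then use the two closure properties of the sets $V_{\eta,N}$ that were just recorded (the multiplicative rule $V_{\eta,N}\cdot V_{\eta',N}\subseteq V_{\eta\eta' n,N}$ and the additive rule $V_{\eta,N}+V_{\eta',N}\subseteq V_{\eta+\eta',N}$) to control the entries of the tail $\sum_{k\geq 2}(-W)^{k}$. This is the conceptually obvious approach; the only work is bookkeeping the two constants.

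First I would check convergence of the Neumann series. By induction, iterating the multiplicative rule gives $W^{k}\in V_{\eta^{k}n^{k-1},N}$ for every $k\geq 1$, so in particular every entry of $W^{k}$ is bounded in absolute value by $\eta^{k}n^{k-1}=\tfrac{1}{n}(\eta n)^{k}$. Since the hypothesis $\eta<\tfrac{1}{2n}$ gives $\eta n<\tfrac{1}{2}$, the geometric series $\sum_{k\geq 0}(-W)^{k}$ converges absolutely in $\mathrm{Mat}_{n}(\RR)$ and equals $(I+W)^{-1}$.

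Next, I separate off the first two terms to obtain
\[
(I+W)^{-1}=I-W+\sum_{k=2}^{\infty}(-W)^{k}.
\]
Applying the additive rule to the tail and summing the geometric series produces
\[
\sum_{k=2}^{\infty}(-W)^{k}\in V_{\rho,N},\qquad \rho=\sum_{k=2}^{\infty}\eta^{k}n^{k-1}=\frac{\eta^{2}n}{1-\eta n}.
\]
Because $\eta n<\tfrac{1}{2}$ we have $\rho\leq 2\eta^{2}n$, and the looseness here is absorbed by the small slack in the final conclusion; this gives the first statement $(I+W)^{-1}\in I-W+V_{\eta^{2}n,N}$ (up to the constant which is immaterial for what follows).

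For the second statement, combine $-W\in V_{\eta,N}$ with the tail bound via the additive rule to get
\[
(I+W)^{-1}-I\in V_{\eta,N}+V_{\eta^{2}n,N}\subseteq V_{\eta+\eta^{2}n,N}\subseteq V_{\eta+\eta/2,N}=V_{3\eta/2,N},
\]
where the penultimate inclusion uses $\eta^{2}n\leq \eta/2$, which is exactly the hypothesis $\eta<\tfrac{1}{2n}$. Since $(I+W)^{-1}\in\SL_{n}(\RR)$ automatically (as $I+W\in\SL_{n}(\RR)$ by assumption), we conclude $(I+W)^{-1}\in (I+V_{3\eta/2,N})\cap\SL_{n}(\RR)=B_{3\eta/2,N}$, as desired. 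There is no real obstacle; the only ``choice'' in the argument is making sure the constants from the geometric tail are dominated by the slack $\eta/2$ allowed in going from $B_{\eta,N}$ to $B_{3\eta/2,N}$, which is precisely what $\eta<\tfrac{1}{2n}$ buys.
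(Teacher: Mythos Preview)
Your approach is exactly the paper's: expand $(I+W)^{-1}$ as a Neumann series and control the tail via the additive and multiplicative closure rules for the sets $V_{\eta,N}$.

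One bookkeeping slip worth flagging (which the paper's own proof also glosses over): you correctly compute the tail bound $\rho=\frac{\eta^{2}n}{1-\eta n}\le 2\eta^{2}n$, but in the displayed line for the second statement you then plug in $V_{\eta^{2}n,N}$ rather than $V_{\rho,N}$. With only $\eta<\tfrac{1}{2n}$ the honest bound gives $(I+W)^{-1}-I\in V_{\eta+\rho,N}\subseteq V_{2\eta,N}$, and obtaining $V_{3\eta/2,N}$ literally requires $\eta n\le\tfrac{1}{3}$. This discrepancy is cosmetic and harmless for every use of the lemma in the paper (all that is ever needed is $B_{\eta,N}^{-1}\subseteq B_{c\eta,N}$ for some fixed $c$), but you should not write ``up to the constant which is immaterial'' and then use the sharper constant on the next line.
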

\begin{proof}
Given any $W\in V_{\eta,N}$ we have that $W^{k}\in V_{\eta\left(\eta n\right)^{k},N}$
and hence $\sum_{2}^{\infty}\left(-W\right)^{k}\in\sum_{1}^{\infty}V_{\eta\left(\eta n\right)\frac{1}{2^{k}},N}=V_{\eta\left(\eta n\right),N}$
is well defined. It then follows that $\left(I+W\right)^{-1}=I-W+\sum_{2}^{\infty}\left(-W\right)^{k}\in I-W+V_{\eta\left(\eta n\right),N}$.
\end{proof}
\begin{lem}
\label{lem:change_center}Let $g,h\in\Gamma\backslash G$ such that
$h\in gB_{\eta,N}$ for $\eta<\frac{1}{2n}$. Then $gB_{\eta,N}\subseteq hB_{4\eta,N}$.
\end{lem}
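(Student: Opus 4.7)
The plan is to reduce the containment $gB_{\eta,N}\subseteq hB_{4\eta,N}$ to a purely group-theoretic statement about products of the neighborhoods $B_{\eta,N}$, and then verify it using the additive and multiplicative behavior of the $V_{\eta,N}$'s stated just before the lemma.

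First, I would choose lifts to $G$ so that the hypothesis $h\in gB_{\eta,N}$ reads $h=gb$ for some $b\in B_{\eta,N}$, whence $g=hb^{-1}$. After projecting back to $\Gamma\backslash G$, the desired containment is equivalent to $b^{-1}B_{\eta,N}\subseteq B_{4\eta,N}$. Since $\eta<\frac{1}{2n}$, \lemref{inverse_ball} gives $b^{-1}\in B_{3\eta/2,N}$, so it suffices to establish the multiplicative inclusion $B_{3\eta/2,N}\cdot B_{\eta,N}\subseteq B_{4\eta,N}$.

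For this, write $b^{-1}=I+W_{1}$ with $W_{1}\in V_{3\eta/2,N}$ and an arbitrary $b'=I+W_{2}\in B_{\eta,N}$ with $W_{2}\in V_{\eta,N}$, so that $b^{-1}b'=I+W_{1}+W_{2}+W_{1}W_{2}$. Using $V_{\eta,N}+V_{\eta',N}\subseteq V_{\eta+\eta',N}$ and $V_{\eta,N}\cdot V_{\eta',N}\subseteq V_{\eta\eta'n,N}$ we obtain $W_{1}+W_{2}\in V_{5\eta/2,N}$ and $W_{1}W_{2}\in V_{3\eta^{2}n/2,N}$. The hypothesis $\eta<\frac{1}{2n}$ makes $3\eta^{2}n/2<3\eta/4$, hence $W_{1}+W_{2}+W_{1}W_{2}\in V_{5\eta/2+3\eta/4,N}=V_{13\eta/4,N}\subseteq V_{4\eta,N}$. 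As the product lies in $\SL_{n}(\RR)$, it belongs to $B_{4\eta,N}$, giving $b^{-1}b'\in B_{4\eta,N}$ for every $b'\in B_{\eta,N}$, which is the inclusion we wanted.

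There is no real obstacle here: the statement is a quantitative version of the fact that a small translate of a small ball sits inside a slightly larger ball, and the only thing to be careful about is the quadratic cross-term $W_{1}W_{2}$, which picks up a factor of $n$ from the matrix product estimate. The hypothesis $\eta<\frac{1}{2n}$ is exactly what is needed to dominate this quadratic term by a linear one, leaving the small constant slack ($13/4$ vs.\ $4$) that makes the inclusion into $B_{4\eta,N}$ hold.
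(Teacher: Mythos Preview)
Your argument is correct and follows essentially the same route as the paper: write $h=g\tilde h$, reduce to $\tilde h^{-1}B_{\eta,N}\subseteq B_{4\eta,N}$, invoke \lemref{inverse_ball} to get $\tilde h^{-1}\in B_{3\eta/2,N}$, and then check $V_{3\eta/2,N}+V_{\eta,N}+V_{3\eta/2,N}V_{\eta,N}\subseteq V_{4\eta,N}$ using the additive and multiplicative inclusions for the $V$'s together with $\eta<\frac{1}{2n}$. The only cosmetic difference is that you track the constant a bit more carefully (arriving at $13\eta/4$), whereas the paper simply records the final inclusion into $V_{4\eta,N}$.
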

\begin{proof}
Write $h=g\tilde{h}$ for some $\tilde{h}\in B_{\eta,N}$. Then $gB_{\eta,N}=h\tilde{h}^{-1}B_{\eta,N}\subseteq hB_{\eta,N}^{-1}B_{\eta,N}\subseteq yB_{\frac{3}{2}\eta,N}B_{\eta,N}$.
The claim now follows from the fact that $V_{\frac{3}{2}\eta,N}+V_{\eta,N}+V_{\frac{3}{2}\eta,N}V_{\eta,N}\subseteq V_{4\eta,N}$.
\end{proof}
The following lemma shows that we can cover an $\left(\eta,N\right)$
ball by $\left(\frac{\eta}{R},N\right)$ balls where the number of
small balls needed is bounded as a function of $R$.
\begin{lem}
\label{lem:cover_constant}Let $R\in\RR_{>1}$ and $0<\eta<\frac{1}{6nR}$.
There exists less than $\left(8R\right)^{n^{2}}$ elements $\gamma_{i}\in B_{\eta,N}$
such that $B_{\eta,N}\subseteq\bigcup\gamma_{i}B_{\frac{\eta}{R},N}$.
If $S\subseteq B_{\eta,N}$ is given, then we can choose $\gamma_{i}\in S$
such that $S\subseteq\bigcup\gamma_{i}B_{\frac{\eta}{R},N}$.
\end{lem}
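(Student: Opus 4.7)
The plan is to prove both statements of the lemma simultaneously by a standard greedy packing argument in $\SL_n(\RR)$; the first claim is recovered by applying the second with the choice $S=B_{\eta,N}$.

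I would pick a maximal subfamily $\{\gamma_i\}\subseteq S$ such that the ``small'' balls $\gamma_i B_{\eta/(4R),N}$ are pairwise disjoint. The covering property then comes for free from maximality: given any $\gamma\in S$, some $\gamma_i$ must satisfy $\gamma B_{\eta/(4R),N}\cap\gamma_i B_{\eta/(4R),N}\neq\emptyset$, so $\gamma\in\gamma_i\cdot B_{\eta/(4R),N}\cdot B_{\eta/(4R),N}^{-1}$. Combining \lemref{inverse_ball} (which replaces the inverse by $B_{3\eta/(8R),N}$) with the multiplicative/additive inclusion $(I+V_{a,N})(I+V_{b,N})\subseteq I+V_{a+b+abn,N}$, the hypothesis $\eta<\tfrac{1}{6nR}$ easily gives $B_{\eta/(4R),N}\cdot B_{\eta/(4R),N}^{-1}\subseteq B_{\eta/R,N}$: the linear contribution totals $\tfrac{5\eta}{8R}$ and the cross term is bounded by $\tfrac{3\eta^2 n}{32R^2}$, which is swallowed by the remaining slack $\tfrac{3\eta}{8R}$. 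Hence $\gamma\in\gamma_i B_{\eta/R,N}$, as required.

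For the cardinality bound, the same product inclusion together with $\gamma_i\in B_{\eta,N}$ shows that $\gamma_i B_{\eta/(4R),N}\subseteq B_{2\eta,N}$, so all the pairwise disjoint small balls lie inside $B_{2\eta,N}$. Haar measure on $\SL_n(\RR)$ then bounds their number by the ratio $\mathrm{vol}(B_{2\eta,N})/\mathrm{vol}(B_{\eta/(4R),N})$. The final step is to evaluate this ratio: near the identity, $B_{\rho,N}$ is bi-Lipschitz equivalent (with absolute constants) to an $(n^2-1)$-dimensional coordinate box having $n-1$ ``short'' sides of length $2\rho e^{-N}$ and $n^2-n$ ``long'' sides of length $2\rho$, with one ambient coordinate eliminated by the codimension-one constraint $\det=1$. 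Hence the ratio scales like $(8R)^{n^2-1}$, comfortably below $(8R)^{n^2}$.

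The principal obstacle is justifying this last Haar-measure estimate with the constants working out cleanly. The natural route is to parametrize $B_{\rho,N}$ by the $n^2-1$ matrix entries other than the $(1,1)$ entry and solve $\det(I+W)=1$ for $W_{11}$ via the implicit function theorem. The Jacobian $\partial_{W_{11}}\det(I+W)$ is the $(1,1)$-cofactor, which equals $1$ at $W=0$ and remains within an absolute factor of $1$ throughout $B_{\eta,N}$ by the assumption $\eta<\tfrac{1}{6n}$; this uniform control yields two-sided volume estimates whose implicit constants can be absorbed into the gap between $(8R)^{n^2-1}$ and $(8R)^{n^2}$ using $R>1$. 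The ``if $S$ is given'' clause needs no extra work, since the greedy selection was already performed inside $S$.
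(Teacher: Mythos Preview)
Your greedy-packing approach is genuinely different from the paper's, and the covering half of your argument is correct. The gap is exactly where you flagged it: the Haar-measure volume ratio does not come out as cleanly as you assert. Your claim that $B_{\rho,N}$ is bi-Lipschitz to the $(n^2-1)$-dimensional coordinate box ``with absolute constants'' is not right. In the chart $(W_{ij})_{(i,j)\neq(1,1)}$, the set $B_{\rho,N}$ is the projected box \emph{intersected} with the preimage of $\{|W_{11}|\leq\rho\}$; on the hypersurface $\det(I+W)=1$ one has $W_{11}=-\sum_{i\geq 2}W_{ii}+O(n^2\rho^2)$, so this extra constraint removes an $n$-dependent fraction of the box --- essentially the probability that a sum of $n-1$ i.i.d.\ uniforms on $[-1,1]$ lands in $[-1,1]$, which decays like $n^{-1/2}$. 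The resulting factor in the volume ratio is therefore of order $\sqrt{n}$, and for $R$ close to $1$ and $n$ large this is not absorbed by the gap between $(8R)^{n^2-1}$ and $(8R)^{n^2}$. Your cofactor/Jacobian remark is fine but addresses a different source of error. Your argument thus yields a bound of the form $C(n)\,R^{n^2-1}$, which is adequate for the application but not the constant as stated.

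The paper sidesteps Haar measure entirely by working one dimension up: it covers the full $n^2$-dimensional additive box $V_{\eta,N}\subset\RR^{n^2}$ by at most $(4\lceil R\rceil)^{n^2}\leq(8R)^{n^2}$ translates $v_i+V_{\eta/(4R),N}$ (trivial, since each of the $n^2$ side-lengths shrinks by the same factor $4R$), then checks --- via the same product estimate you use --- that any two points of $\SL_n(\RR)$ lying in a common additive cell differ multiplicatively by an element of $B_{\eta/R,N}$, and selects one $\gamma_i\in S$ from each occupied cell. This gives the stated constant directly, with no volume comparison and no $n$-dependent loss.
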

\begin{proof}
The cover argument is obvious if we work in the additive world, i.e.
in $V_{\bar{r}}$ instead, so we start there and write $V_{\eta,N}=\bigcup_{1}^{K}\left(v_{i}+V_{\frac{\eta}{4R},N}\right)$
where $K=\left(4\left\lceil R\right\rceil \right)^{n^{2}}\leq\left(8R\right)^{n^{2}}$.
Suppose that $W_{1},W_{2}\in v_{i}+V_{\frac{\eta}{4R},N}$, and we
wish to show that $\left(I+W_{1}\right)^{-1}\left(I+W_{2}\right)$
are in a small ball $B_{\frac{\eta}{R},N}$.

If $W_{1},W_{2}\in v_{i}+V_{\frac{\eta}{4R},N}$, then setting $W_{1}'=\sum_{2}^{\infty}\left(-W\right)^{k}$,
which is in $V_{\eta^{2}n,N}$ by \lemref{inverse_ball}, we get that
\begin{align*}
\left(I+W_{1}\right)^{-1}\left(I+W_{2}\right) & =\left(I-W_{1}+W_{1}'\right)\left(I+W_{2}\right)=I+\left(W_{2}-W_{1}\right)-W_{1}W_{2}+W_{1}'+W_{1}'W_{2}\\
 & \in I+V_{\frac{\eta}{2R},N}+V_{\eta^{2}n,N}+V_{\eta^{2}n,N}+V_{\eta^{3}n^{2},N}\subseteq I+V_{\frac{\eta}{2R},N}+V_{3\eta^{2}n,N}\subseteq I+V_{\frac{\eta}{R},N}.
\end{align*}

For each $1\leq i\leq K$ choose $\gamma_{i}\in\SL_{n}\left(\RR\right)\cap\left(I+v_{i}+V_{\frac{\eta}{4R},N}\right)$
if this set is not empty, so by the argument above, any $I+W\in\SL_{n}\left(\RR\right)\cap\left(I+V_{\frac{\eta}{R},N}\right)$
satisfies $\gamma_{i}^{-1}\left(I+W\right)\in B_{\frac{\eta}{R},N}$,
and we conclude that $B_{\eta,R}\subseteq\bigcup_{1}^{K}\gamma_{i}B_{\frac{\eta}{R},N}$.

If $S\subseteq B_{\eta,N}$ is given, then choosing $\gamma_{i}\in S\cap\left(I+v_{i}+V_{\frac{\eta}{4r},N}\right)$
will satisfy the requirements of the lemma.
\end{proof}
We now use these lemmas in order to prove \lemref{bowen_control}.
\begin{proof}[Proof of \lemref{bowen_control}]
Choose $0<\eta_{0}\left(M,n\right)<\frac{1}{6n}$ to be small enough
so that the map $g\mapsto xg$ from $B_{\eta_{0}e}\to\Gamma\backslash G$
is injective for all $x\in X^{\leq M}$ and let $\pp=\left\{ P_{0},...,P_{\ell}\right\} $
be an $\left(M,\eta\right)$ partition, $\eta<\eta_{0}\left(M\right)$. 

Consider the function $f\left(x\right)=\frac{1}{N}\sum_{0}^{N-1}1_{X^{>M}}\left(T^{i}x\right)$
and note that this function is constant on each $P\in\pp_{N}$.

Setting $X'=X^{\leq M}\cap\left\{ x:f\left(x\right)\leq\kappa\right\} $,
we obtain that
\begin{align*}
1 & \leq\mu\left(X^{>M}\right)+\mu\left(\left\{ f\left(x\right)>\kappa\right\} \right)+\mu\left(X'\right)\leq\mu\left(X^{>M}\right)+\kappa^{-1}\int f\left(x\right)\mathrm{d\mu}+\mu\left(X'\right)\\
 & =\mu\left(X^{>M}\right)+\mu^{N}\left(X^{>M}\right)\kappa^{-1}+\mu\left(X'\right)
\end{align*}
thus proving part $(3)$ in the theorem.

Suppose that $S\in\pp_{N}$, $S\subseteq X'$ and let $V_{m}=\left|\left\{ 0\leq i\leq m\;\mid\;T^{i}\left(S\right)\subseteq X^{>M}\right\} \right|$.
Let $C$ be the constant from \lemref{cover_constant} For $R=e$.
We claim that $S\subseteq\bigcup_{1}^{C^{\left|V_{m}\right|}}h_{i,m}B_{\eta,m}$
where $h_{i,m}\in S$ for any $0\leq m\leq N$, and the lemma will
follow by taking $m=N-1$. For $m=0$, let $h\in S\subseteq P_{i}\subseteq g_{i}B_{\frac{\eta}{4}}$
for some $i\geq1$, so by \lemref{change_center} we obtain that $S\subseteq hB_{\eta}$,
thus proving the case for $m=0$.

Assume that $S\subseteq\bigcup_{i=1}^{C^{\left|V_{m}\right|}}h_{i,m}B_{\eta,m},\ h_{i,m}\in S$
for $m<N-1$ and we prove for $m+1$. We set $a=a\left(\frac{1}{n}\left(1-n,1,...,1\right)\right)\in A$.
\begin{itemize}
\item Suppose first that $T^{m+1}S\subseteq X^{\leq M}$ so that $T^{m+1}S\subseteq P_{j}\subseteq g_{j}B_{\frac{\eta}{4}}$
for some $j\geq1$. This case will be complete if we can show that
$S\cap h_{i,m}B_{\eta,m}=S\cap h_{i,m}B_{\eta,m+1}$ for every $i$.
Indeed, \lemref{change_center} implies that $T^{m+1}S\subseteq h_{i,m}a^{m+1}B_{\eta}$,
so if $h_{i,m}g\in S$ with $g\in B_{\eta,m}$, then
\[
\left[h_{i,m}a^{\left(m+1\right)}\right]a^{-\left(m+1\right)}ga^{\left(m+1\right)}=h_{i,m}ga^{\left(m+1\right)}\in T^{m+1}S\subseteq y_{i}a^{\left(m+1\right)}B_{\eta}.
\]
By the assumption on the injectivity radius and since $a^{-\left(m+1\right)}ga^{\left(m+1\right)}\in B_{\eta e}$
, we conclude that $g\in B_{\eta,m}\cap a^{\left(m+1\right)}B_{\eta}a^{-\left(m+1\right)}=B_{\eta,m+1}$
which is what we wanted to show.
\item Suppose now that $T^{n+1}S\subseteq X^{>M}$. By \lemref{cover_constant},
for each $i$ we have that $S\cap h_{i,m}B_{\eta,m}\subseteq\bigcup_{j=1}^{C}\tilde{h}_{i,m}^{(j)}B_{\frac{\eta}{4e},m}$.
Choose $h_{i,m}^{(j)}\in S\cap\tilde{h}_{i,m}^{(j)}B_{\frac{\eta}{4e},m}$
if this set is not empty and otherwise choose $h_{i,m}^{(j)}\in S$
arbitrarily. By \lemref{change_center} it follows that $\bigcup_{j=1}^{C}\tilde{h}_{i,m}^{(j)}B_{\frac{\eta}{4e},m}\subseteq\bigcup_{j=1}^{C}h_{i,m}^{(j)}B_{\frac{\eta}{e},m}\subseteq\bigcup_{j=1}^{C}h_{i,m}^{(j)}B_{\eta,m+1}$,
which completes the proof.
\end{itemize}
\end{proof}
\bibliographystyle{plain}
\bibliography{../bib}

\begin{thebibliography}{1}

\bibitem{david_equidistribution_2017}
Ofir David and Uri Shapira.
\newblock Equidistribution of divergent orbits and continued fraction expansion
  of rationals.
\newblock {\em arXiv:1707.00427 [math]}, July 2017.
\newblock arXiv: 1707.00427.

\bibitem{einsiedler_invariant_2006}
Manfred Einsiedler, Anatole Katok, and Elon Lindenstrauss.
\newblock Invariant {Measures} and the {Set} of {Exceptions} to {Littlewood}'s
  {Conjecture}.
\newblock {\em Annals of Mathematics}, 164(2):513--560, September 2006.

\bibitem{einsiedler_diagonal_nodate}
Manfred Einsiedler and Elon Lindenstrauss.
\newblock Diagonal actions on locally homogeneous spaces.

\bibitem{einsiedler_distribution_2012}
Manfred Einsiedler, Elon Lindenstrauss, Philippe Michel, and Akshay Venkatesh.
\newblock The distribution of closed geodesics on the modular surface, and
  {Duke}'s theorem.
\newblock {\em Enseign. Math.}, 58(3):249--313, 2012.

\bibitem{einsiedler_entropy_nodate}
Manfred Einsiedler, Elon Lindenstrauss, and Ward Thomas.
\newblock {\em Entropy in ergodic theory and homogeneous dynamics}.
\newblock To appear.
\newblock Preprint available at
  http://www.personal.leeds.ac.uk/{\textasciitilde}mattbw/entropy.

\bibitem{shapira_limiting_nodate}
Uri Shapira and C~Zheng.
\newblock Limiting distributions of translates of divergent diagonal orbits.
\newblock {\em Preprint}.

\bibitem{tomanov_closed_2003}
George Tomanov and Barak Weiss.
\newblock Closed orbits for actions of maximal tori on homogeneous spaces.
\newblock {\em Duke Math. J.}, 119(2):367--392, August 2003.

\bibitem{walters_introduction_2000}
Peter Walters.
\newblock {\em An {Introduction} to {Ergodic} {Theory}}.
\newblock Springer Science \& Business Media, October 2000.

\end{thebibliography}

\end{document}